\DeclareMathAlphabet{\pazocal}{OMS}{zplm}{m}{n}
\DeclareMathAlphabet{\mathcal}{OMS}{cmsy}{m}{n}
\SetMathAlphabet{\mathcal}{bold}{OMS}{cmsy}{b}{n}
\newcommand{\R}{\mathbb{R}}
\newcommand{\C}{\mathbb{C}}
\newtheorem{atheorem}{Theorem}[section]
\newtheorem{theorem}{Theorem}
\newtheorem{proposition}{Proposition}
\newtheorem{remark}{Remark}
\numberwithin{equation}{section}
\begin{document}
	\title{Multiversion of the Hausdorff--Young inequality}
	\author[P.~Ivanisvili]{Paata Ivanisvili}
	\author[P.~Kalantzopoulos]{Pavlos Kalantzopoulos}
	
	\address{Department of Mathematics, University of California Irvine, Irvine, CA
	}
	\email{pivanisv@uci.edu \textrm{(P.\ Ivanisvili)}}
	
	\address{Department of Mathematics, University of California Irvine, Irvine, CA
	}
	\email{pkalantz@uci.edu  \textrm{(P.\ Kalantzopoulos)}}
	\makeatletter
	\@namedef{subjclassname@2010}{
		\textup{2010} Mathematics Subject Classification}
	\makeatother
	\subjclass[2010]{42B20, 42B35, 47A30, 42A38}
	\keywords{}
\begin{abstract}
We consider a family of \emph{jointly Gaussian} random vectors \(\xi_j \in \mathbb{R}^{k_j}\), each standard normal but possibly correlated, and investigate when
\[
\mathbb{E}\, F\!\Bigl(B\bigl(|T_{z_1} f_1(\xi_1)|,\dots,|T_{z_n} f_n(\xi_n)|\bigr)\Bigr)
\;\;\le\;\;
F\!\Bigl(\,\mathbb{E}\,B\bigl(|f_1(\xi_1)|,\dots,|f_n(\xi_n)|\bigr)\Bigr)
\]
holds, where \(T_{z}\) is either a Mehler transform (\(z \in \mathbb{C}\)) or a noise operator (\(z \in \mathbb{R}\)).
This framework unifies and extends real and complex hypercontractivity to multi-function settings, yielding multiversions of the sharp Hausdorff--Young inequality, the log-Sobolev inequality, and a ``noisy'' Gaussian--Jensen inequality.
Applications include a new covariance-based characterization of the Brascamp--Lieb inequality in the presence of noise.
\end{abstract}

	\maketitle

\section{Introduction}

A classical result in harmonic analysis is the \emph{Hausdorff--Young inequality with sharp constant}. It states that for functions $g$ in  $L^{p}(\mathbb{R}^{n})$  and exponents $1<p\leq 2$, $q=\frac{p}{p-1}$, one has
\[
\bigl\|\widehat{g}\bigr\|_{L^q(\mathbb{R}^n)}
\;\le\;
C_{p,q}
\;\bigl\|g\bigr\|_{L^p(\mathbb{R}^n)},
\]
where $\widehat{g}(x) =\frac{1}{(2\pi)^{\frac{n}{2}}} \,\int_{\mathbb{R}^n} g(y)\, e^{-\,i \, x\cdot y}\,\mathrm{d}y$ is the Fourier transform, and the constant \(C_{p,q}\) is best possible. This celebrated theorem was first established by Beckner~\cite{Beckner}. Beckner's original argument began with an inequality on two points (a discrete \(\{-1,1\}\) hypercube setting), then used an induction and a central limit theorem to pass to the Gaussian setting, a result that has come to be called \emph{complex hypercontractivity}. Afterward, a suitable change of variables yielded the Hausdorff--Young inequality in its full sharp form.

Over the years, several other remarkable proofs of this sharp inequality have appeared, together with stability results \cite{Christ}. Janson~\cite{Jans} used a Brownian-motion (stochastic-calculus) approach; Lieb~\cite{Lieb} exploited in a very illuminating way the symmetry of the Gaussian extremizers (``Gaussian kernels have only Gaussian maximizers''); and Hu~\cite{Hu01} essentially replaced the stochastic part of Janson's proof by writing everything in terms of heat flows. Later in \cite{IvanVol} it was noted that the continuous heat-flow argument is, in fact, a limiting version of Beckner's original discrete inductive scheme. This (continuous) flow is sometimes called the \emph{Beckner--Janson flow} \cite{IvanVol}, although it is perhaps less familiar to many experts. It turns out that if one tries to use \emph{standard} heat-flow methods in the usual manner, the monotonic interpolation property for the Hausdorff--Young inequality \emph{fails}, as shown by Bennett, Bez, and Carbery~\cite{BannettBeazCarbery}. In contrast, the subtle Beckner--Janson flow does give a monotonic approach and is the key new ingredient in these proofs.

\medskip

\noindent
\textbf{Multifunction extensions.}
In this paper, we carry forward the Beckner--Janson flow ideas to a \emph{multifunction} setting. In doing so, we revisit several classical sharp estimates in harmonic analysis (such as the aforementioned Hausdorff--Young inequality, the Gaussian hypercontractive inequalities, etc.) and obtain their precise sharp analogues for multiple functions. To get a feel for one of our main results on the sharp multiversion of the Hausdorff--Young inequality, we refer the reader to Theorem~\ref{rhoHSI}, i.e., the “baby” two-function case with equal exponents; in that setting, taking \(f = g\) and \(\rho = 0\)  recovers the sharp Beckner--Babenko inequality.

Beyond these specific corollaries, our main technical contribution lies in an  \emph{FB~theorem}, i.e., Theorem~\ref{complexCaseTheorem}, which provides a fairly general framework for proving sharp multifunction estimates via the Beckner--Janson flow. We show that the classical Brascamp--Lieb inequalities  and our new multifunction Hausdorff--Young inequalities can all be viewed as special cases of this single abstract FB~theorem. While we mostly focus here on how this theorem extends sharp known inequalities to their natural ``multi'' analogues, we believe it may well have broader applicability for other families of inequalities, by allowing one to choose different pairs of \((F,B)\).

\section{Main results}
\noindent
\textbf{Notations.} In this paper $\xi := (\xi_{1}, \ldots, \xi_{n})$ denotes a normal random vector in $\mathbb{R}^{k_{1}+\ldots+k_{n}}$, where each $\xi_{j}$ is a standard normal Gaussian vector in $\mathbb{R}^{k_{j}}$ for $j = 1, \ldots, n$. We remark that $\xi_{1}, \ldots, \xi_{n},$ are not necessarily independent of each other.  Throughout the paper, unless stated otherwise, the exponents $p_{1}, \ldots, p_{n}$ are assumed to be positive, the matrix $\mathrm{cov}(\xi)$ has full rank, $\lambda_{\min}$ and $\lambda_{\max}$ will denote the smallest and the largest eigenvalues of the covariance matrix $\mathrm{cov}(\xi)$. The symbol $\mathcal{P}(\mathbb{R}^{n})$ denotes the space of all polynomials $h : \mathbb{R}^{n} \mapsto \mathbb{C}$, and for any function $f$ we let $\mathcal{P}(\mathbb{R}^{n})f := \{ hf,  \;   h \in \mathcal{P}(\mathbb{R}^{n})\}$. 

\vskip1cm

Our first main result is the extension of complex hypercontractivity to several functions. 
\begin{theorem}[$n$-function complex hypercontractivity]\label{MCH}
Let $\alpha\geq 1$ and $(z_{1}, \ldots, z_{n}) \in \mathbb{C}^{n}$. The inequality 
 \begin{align}\label{multipulti}
 \| \prod_{j=1}^{n} |T_{z_{j}}f_{j}(\xi_{j})|^{p_{j}}\|_{\alpha} \leq \|  \prod_{j=1}^{n} |f_{j}(\xi_{j})|^{p_{j}}\|_{1}
 \end{align}
 holds for all polynomials $f_{j} : \mathbb{R}^{k_{j}} \mapsto \mathbb{C}$ if and only if
\begin{align}\label{localcc}
\sum_{i,j}(\Re w_{i})\mathrm{cov}(\xi_{i}, \xi_{j}) (\Re w_{j})^{T} - \alpha (\Re z_{i}w_{i}) \mathrm{cov}(\xi_{i}, \xi_{j}) (\Re z_{j}w_{j})^{T} \geq \Re \sum_{j} \frac{(1-z_{j}^{2}) w_{j} w_{j}^{T}}{p_{j}}
\end{align}
holds for all $w_{j} \in \mathbb{C}^{k_{j}}$ where $j=1, \ldots, n$.
\end{theorem}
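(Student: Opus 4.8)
The proof splits naturally into the necessity and sufficiency of condition \eqref{localcc}.

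For the **necessity direction**, I would test the inequality \eqref{multipulti} on near-constant functions. The standard trick for sharp hypercontractive-type inequalities is to plug in exponentials: take $f_j = e^{\langle w_j, \cdot\rangle}$ (or, staying within polynomials as the statement requires, expand $e^{\varepsilon\langle w_j,\cdot\rangle}$ to second order in $\varepsilon$ and use a density/approximation argument to justify the limit). The Mehler/noise operator acts on $e^{\langle w_j,\cdot\rangle}$ in a controlled way: $T_{z_j}e^{\langle w_j,x\rangle} = e^{z_j\langle w_j,x\rangle + \frac{(z_j^2-1)}{2}\langle w_j,w_j\rangle}$ or similar (the exact normalization depending on the convention fixed earlier in the paper). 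Substituting, taking logarithms, and computing the Gaussian expectations $\mathbb{E}\,e^{\langle v,\xi\rangle} = e^{\frac12 v^T\mathrm{cov}(\xi)v}$ reduces both sides to explicit Gaussian quadratic forms in the $w_j$. Matching the quadratic terms (the linear terms cancel by the normalization) yields exactly the inequality \eqref{localcc}; the homogeneity in $\varepsilon$ forces it to hold for all $w_j$. The appearance of $\Re$ everywhere comes from the fact that $|T_{z_j}f_j|$ involves $T_{z_j}f_j \cdot \overline{T_{z_j}f_j}$, and $\mathrm{cov}$ pairs real parts.

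For the **sufficiency direction**, this is where the Beckner--Janson flow enters, and it should be deducible from the abstract FB-theorem (Theorem \ref{complexCaseTheorem}) by choosing $F(t) = t^{1/\alpha}$ and $B(u_1,\dots,u_n) = \prod_j u_j^{\alpha p_j}$ — indeed the introduction explicitly says the multifunction Hausdorff--Young and hypercontractive inequalities are special cases of that theorem. So the cleanest route is: verify that with this choice of $(F,B)$, the differential/pointwise hypotheses of Theorem \ref{complexCaseTheorem} are implied by \eqref{localcc}, then invoke it. Concretely, one runs the flow $f_j \mapsto T_{z_j(t)}f_j$ for a one-parameter family of parameters interpolating between $z_j$ and the identity, differentiates the quantity $\mathbb{E}\,B(|T_{z_j(t)}f_j(\xi_j)|)$ (or rather $F$ of it) in $t$, and shows the derivative has the right sign precisely when \eqref{localcc} holds. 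The pointwise inequality that must be checked after differentiation is a quadratic-form inequality in the gradients $\nabla(T_{z_j}f_j)$, and it is the "infinitesimal" version of \eqref{localcc} — so the same linear algebra that appeared in the necessity computation reappears, now as the engine of sufficiency. The convexity/concavity properties of $B$ and $F$ (log-convexity of $B$, here literally a product of powers, and concavity of $F(t)=t^{1/\alpha}$ since $\alpha\ge1$) are exactly what make the boundary terms and the Cauchy--Schwarz steps go through.

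The **main obstacle** is the sufficiency step: specifically, checking that condition \eqref{localcc} — a single quadratic-form inequality indexed by $(w_1,\dots,w_n)$ — is *equivalent* to the pointwise differential inequality that the flow produces along the way, not merely sufficient for it. One must show that the worst case of the pointwise inequality (over all possible functions $f_j$, hence over all possible gradient configurations) is attained precisely at exponential functions, which brings us back to the quadratic form in \eqref{localcc}; the exponential functions serve simultaneously as the extremizers for necessity and as the reduction of the pointwise test for sufficiency. Handling the complex parameter $z_j$ (as opposed to the real noise case) requires care in splitting $T_{z_j}f_j$ into modulus and phase and controlling the phase contribution, which is the source of the $\Re(z_jw_j)$ terms; this is the technically delicate part where the complex case genuinely differs from real hypercontractivity. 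If the FB-theorem \ref{complexCaseTheorem} is stated in sufficient generality, most of this is absorbed into its proof, and the work here is reduced to the (still non-trivial) bookkeeping of identifying the right $(F,B)$ and verifying its hypotheses reduce to \eqref{localcc}.
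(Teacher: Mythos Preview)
Your overall strategy is exactly the paper's: both directions of Theorem~\ref{MCH} are obtained by specializing Theorem~\ref{complexCaseTheorem} to power functions, and the only work is to verify that the local condition~\eqref{localcomplex} collapses to~\eqref{localcc}. The necessity argument via near-constant test functions is carried out once and for all inside Proposition~\ref{equivalComplex} (with the linear perturbations $f_j=a_j+\varepsilon\,\eta_j\cdot x_j$, which is the polynomial version of your exponential test), so there is no separate necessity proof for Theorem~\ref{MCH} itself.

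However, your specific choice $F(t)=t^{1/\alpha}$, $B(u)=\prod_j u_j^{\alpha p_j}$ is wrong on two counts, and this is not a cosmetic slip. First, plugging that pair into \eqref{CNGJ} gives
\[
\mathbb{E}\,\prod_j |T_{z_j}f_j|^{p_j}\;\le\;\Bigl(\mathbb{E}\,\prod_j |f_j|^{\alpha p_j}\Bigr)^{1/\alpha},
\]
which is $\|\prod |T_{z_j}f_j|^{p_j}\|_1\le \|\prod |f_j|^{p_j}\|_\alpha$ --- the $\alpha$ is on the wrong side and the inequality is not~\eqref{multipulti}. Second, the hypothesis in Theorem~\ref{complexCaseTheorem} is not concavity of $F$ but convexity of $(t,y)\mapsto \tfrac{F''(t)}{F'(t)}y^2$; for $F(t)=t^{1/\alpha}$ with $\alpha\ge 1$ this map equals $\tfrac{1-\alpha}{\alpha}\,\tfrac{y^2}{t}$, which is \emph{concave}, so your $F$ is not admissible. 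The correct choice, used in the paper, is $F(t)=t^{\alpha}$ and $B(c)=c_1^{p_1}\cdots c_n^{p_n}$: then $\tfrac{F''}{F'}=(\alpha-1)/t$ and $(\alpha-1)y^2/t$ is convex for $\alpha\ge 1$, and \eqref{CNGJ} becomes $\mathbb{E}\,\prod |T_{z_j}f_j|^{\alpha p_j}\le (\mathbb{E}\,\prod |f_j|^{p_j})^{\alpha}$, which is exactly~\eqref{multipulti} raised to the $\alpha$. After that, the reduction of \eqref{localcomplex} to \eqref{localcc} is a direct computation using $B_i=p_i B/t_i$, $B_{ij}=(p_ip_j-\delta_{ij}p_i)B/(t_it_j)$, the identity $|\Re z_jw_j|^2-|\Im z_jw_j|^2=\Re(z_j^2 w_jw_j^T)$, and a rescaling $w_i\mapsto \tfrac{t_i}{p_i\sqrt{B}}\,w_i$ to remove the $t$-dependence.
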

In the theorem $||X||_{p} = (\mathbb{E} |X|^{p})^{1/p}$ denotes the $p$'th moment of a random variable $X$. The Mahler transform $T_{z_{j}}$ acts on a polynomial $f_{j}$ written in Fourier-Hermite basis $f_{j} =\sum_{|\beta|\leq \mathrm{deg}(f_j)}c_{\beta} H_{\beta}$ as follows
$$
T_{z_{j}}f_{j} = \sum_{|\beta|\leq \mathrm{deg}(f_{j})}z_{j}^{|\beta|}c_{\beta} H_{\beta},
$$
where $\beta = (\beta_{1}, \ldots, \beta_{k_{j}}) \in (\mathbb{Z}_{+})^{k_{j}}$ is the multiindex, $|\beta| = \beta_{1}+\ldots+\beta_{k_{j}}$, $c_{\beta}\in \C$ and $H_{\beta}(x) = \mathbb{E}(x+i \xi_{j})^{\beta}$ denotes probabilists' Hermite polynomial, where for a complex vector $w_{j}=(w_{j1}, \ldots, w_{jk_{j}}) \in \mathbb{C}^{k_{j}}$ we set $w_{j}^{\beta} = w_{j1}^{\beta_{1}}\cdots w_{jk_{j}}^{\beta_{j}}$, $z_{j} w_{j} = (z_{j} w_{j1}, \ldots, z_{j} w_{jk_{j}})$, and $\Re w_{j} = (\Re w_{j1}, \ldots, \Re w_{jk_{j}})$ for all $j=1, \ldots, n$. 

\subsection{Case of imaginary $z_{j}$'s} 
If $n=1$ then the condition (\ref{localcc}) simplifies to 
$$
|\Re w_{1}|^{2}-\alpha | \Re z_{1} w_{1}|^{2} \geq \Re \frac{(1-z_{1}^{2})w_{1}^{2}}{p_{1}}
$$
which after setting $p_{1}=p$ and choosing $\alpha=q/p$ for any $q\geq p$ becomes the same condition  as in the classical complex hypercontractivity in Gauss space due to Beckner \cite{Beckner} for $z=i\sqrt{p-1}$, and $q=p/(p-1)$;  Weissler \cite{Wei} (for all exponents $p,q$,  $1<p\leq q<\infty$ except $2<p\leq q \leq 3$ and its dual),  Janson \cite{Jans}, Lieb \cite{Lieb},  and Epperson \cite{Epp}. The complex hypercontractivity has found numerous applications in heat-smoothing conjecture \cite{IvaniEsk1,MenNaor}, Markov--Bernstein inequalities and its reverse forms \cite{IE}, moment comparison inequalities \cite{IvTk, IE}, and it provides $L_{p}$  bounds in a systematic way for Fourier multipliers acting on functions having prescribed frequencies \cite{IvNa, IE}. Perhaps the first application of complex hypercontractivity goes back to Beckner.

\begin{atheorem}[Beckner~\cite{Beckner}]\label{bekth}
Let $X \sim N(0, \mathrm{Id_{n}})$. The inequality
\begin{align}\label{bekeka}
\|T_{i \sqrt{p-1}}f(X)\|_{q}\leq \| f(X)\|_{p},
\end{align}
holds for all polynomials $f \in \mathcal{P}(\mathbb{R}^{n})$ if 
\begin{align*}
    \frac{1}{p}+\frac{1}{q}=1, \quad 1\leq p\leq 2. 
\end{align*}
\end{atheorem}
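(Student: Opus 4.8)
The plan is to obtain Theorem~\ref{bekth} as the one-function specialization of Theorem~\ref{MCH}. I take $n=1$, let $\xi_1 = X \sim N(0,\mathrm{Id}_n)$ so that $\mathrm{cov}(\xi_1,\xi_1) = \mathrm{Id}_n$ has full rank, and set $p_1 = p$, $z_1 = i\sqrt{p-1}$, and $\alpha = q/p = \tfrac{1}{p-1}$. For $1 < p \le 2$ we have $q = \tfrac{p}{p-1}\ge 2$, hence $\alpha \ge 1$, so Theorem~\ref{MCH} applies. The boundary case $p=1$ (where $q=\infty$ and $T_0 f$ is the constant Fourier--Hermite coefficient, i.e.\ $T_0 f(X) = \mathbb{E} f(X)$) is trivial, since $\|T_0 f(X)\|_\infty = |\mathbb{E} f(X)| \le \mathbb{E}|f(X)| = \|f(X)\|_1$; alternatively it follows by letting $p\to 1^+$.

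First I would rewrite the two sides of \eqref{multipulti} in this case. The left-hand side is $\bigl\||T_{z_1}f(X)|^{p}\bigr\|_\alpha = \bigl(\mathbb{E}|T_{z_1}f(X)|^{p\alpha}\bigr)^{1/\alpha} = \|T_{z_1}f(X)\|_{p\alpha}^{\,p}$, and since $p\alpha = q$ this is $\|T_{i\sqrt{p-1}}f(X)\|_q^{\,p}$. The right-hand side is $\bigl\||f(X)|^{p}\bigr\|_1 = \mathbb{E}|f(X)|^p = \|f(X)\|_p^{\,p}$. Thus \eqref{multipulti} is precisely the $p$-th power of \eqref{bekeka}, so by Theorem~\ref{MCH} the claim reduces to checking that condition \eqref{localcc} holds for these parameters for every $w_1 \in \mathbb{C}^{k_1}$ (with $k_1 = n$).

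Next I would verify \eqref{localcc}, which in the $n=1$ case (using $\mathrm{cov}(\xi_1,\xi_1)=\mathrm{Id}$) is the scalar inequality displayed in the text, $|\Re w_1|^2 - \alpha\,|\Re(z_1 w_1)|^2 \ge \Re\dfrac{(1-z_1^2)\,w_1 w_1^T}{p_1}$. Since $z_1^2 = -(p-1)$ we get $1 - z_1^2 = p$, so the right-hand side equals $\Re(w_1 w_1^T) = |\Re w_1|^2 - |\Im w_1|^2$, where we write $w_1 = \Re w_1 + i\,\Im w_1$ componentwise and sum squares. On the left, $\Re(z_1 w_1) = \sqrt{p-1}\,\Re(i w_1) = -\sqrt{p-1}\,\Im w_1$, hence $\alpha\,|\Re(z_1 w_1)|^2 = \tfrac{1}{p-1}(p-1)\,|\Im w_1|^2 = |\Im w_1|^2$, and the left-hand side is also $|\Re w_1|^2 - |\Im w_1|^2$. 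So \eqref{localcc} holds, in fact with equality, which completes the proof.

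The point I would stress is that there is essentially no obstacle here once Theorem~\ref{MCH} is in hand: Theorem~\ref{bekth} is pure bookkeeping, namely unwinding the mixed norm $\||\,\cdot\,|^p\|_\alpha$ into the ordinary norm $\|\cdot\|_q$ via $p\alpha=q$, together with a one-line algebraic identity for the quadratic form at the specific Mehler parameter $z = i\sqrt{p-1}$. The only mild care needed is the degenerate endpoint $p=1$, handled separately as above.
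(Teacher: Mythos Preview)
Your proof is correct and follows exactly the route the paper indicates: the paper does not give a separate proof of Theorem~A but remarks, right after stating Theorem~\ref{MCH}, that the $n=1$ case with $p_1=p$, $\alpha=q/p$, and $z=i\sqrt{p-1}$ reduces the local condition \eqref{localcc} to the classical Beckner condition. You have simply written out that specialization and the one-line verification of \eqref{localcc} explicitly (including the harmless endpoint $p=1$), which is precisely what the paper leaves implicit.
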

The inequality \eqref{bekeka} is equivalent to Hausdorff--Young inequality with a sharp constant, namely 
\begin{align}\label{HS1}
\left[\int_{\mathbb{R}^{n}}\left|\widehat{g}\left(x\right)\right|^{q} \mathrm{d}x\right]^{1/q}\leq C\left[\int_{\mathbb{R}^{n}}\left|g\left(x \right)\right|^{p} \mathrm{d}x\right]^{1/p}
\end{align}
holds for all $g= \mathcal{P}(\mathbb{R}^{n})e^{-|x|^{2}/2p}$ with the best possible constant 
\begin{align*}
C=\frac{p^{n/2p}}{q^{n/2q}} (2\pi)^{\frac{n}{2q}-\frac{n}{2p}}, \quad \text{and} \quad  \widehat{g}(x):= \frac{1}{(2\pi)^{n/2}} \int_{\mathbb{R}^{n}} g(y) e^{-i x \cdot y}dy. 
\end{align*}

The connection of $T_{z}f$ for imaginary $z$ and Fourier transform $\widehat{f}$ can be seen through Mehler's formula

\begin{align*}
    T_{z}f(x) = \int_{\mathbb{R}^{n}}f(y) \frac{1}{(\sqrt{1-z^{2}})^{n}}\exp\left(-\frac{|x|^{2}+|y|^{2}}{2} \left(\frac{z^{2}}{1-z^{2}}\right)+x\cdot y \frac{z}{1-z^{2}}\right) \mathrm{d}\gamma(y),
\end{align*}
where $d\gamma(y) = \frac{e^{-|y|^{2}/2}}{(2\pi)^{n/2}}dy$ is the standard Gaussian measure on $\mathbb{R}^{n}$. Thus, for $t\geq 1$
		\begin{align}\label{forfur1}
			T_{i\sqrt{t-1}}f(x) &=\frac{e^{\frac{|x|^{2}(t-1)}{2t}}}{ t^{n/2}}\widehat{fe^{-\frac{|y|^{2}}{2t}}}\left(-\frac{x\sqrt{t-1}}{t}\right).
		\end{align}
        Using the well-known scaling property of the Fourier transform, one can verify that the function $e_{t}(x) := e^{-|x|^{2}/2t}$ has Fourier transform $  \widehat{e_{t}}(\omega) = t^{n/2}e^{-|\omega|^{2}t/2}$. Thus formula \eqref{forfur1} rewrites
		\begin{align}\label{forfur}
			T_{i\sqrt{t-1}}f(x)= \frac{\widehat{fe_{t}}}{\widehat{e_{t}}}\left( -\frac{x\sqrt{t-1}}{t}\right).
	\end{align}
Therefore, Beckner's inequality (\ref{bekeka}) can be seen as another way of rewriting Hausdorff--Young inequality (\ref{HS1}) in a form where the sharp constant becomes to be equal to $1$, and Gaussians can be easily seen to be extremizers 
\begin{align}\label{HSd1}
	\left\|(\widehat{g}/ \widehat{e_{p}})(\eta)\right\|_{q} \leq \left\| (g/ e_{p})(X)\right\|_{p} \quad \text{where} \quad \eta= \frac{\sqrt{p-1}}{p}X, \quad X \sim N\left(0, \mathrm{Id}_{n}\right).
\end{align}

To summarize, the connection between (\ref{HSd1}) and Hausdorff--Young inequality (\ref{HS1}) is that $g=fe_{p}$. When $z_{j}$'s are purely imaginary numbers in Theorem~\ref{MCH} then the condition (\ref{localcc}) simplifies and we obtain 

\begin{theorem}[$n$-function complex hypercontractivity for imaginary parameters]\label{MBEK}
    Let $\alpha\geq 1$ and $(s_{1}, \ldots, s_{n}) \in \mathbb{R}^{n}$. The inequality 
    \begin{align}\label{nfuComImagi}
        \| \prod_{j=1}^{n} |T_{is_{j}}f_{j}(\xi_{j})|^{p_{j}}\|_{\alpha}\leq \| \prod_{j=1}^{n} |f_{j}(\xi_{j})|^{p_{j}}\|_{1}
    \end{align}
    holds for all polynomials $f_{j} \in \mathcal{P}(\mathbb{R}^{k_{j}})$, $j=1, \ldots, n,$ if and only if 
\begin{equation}\label{MBEKloc}
\frac{1}{\alpha}\mathrm{diag}\left\{ \frac{1+s_{j}^{2}}{s_{j}^{2}p_{j}}\mathrm{Id}_{k_{j}} \right\}_{j=1}^{n} \geq \mathrm{cov}(\xi) \geq \mathrm{diag}\left\{ \frac{1+s_{j}^{2}}{p_{j}}\mathrm{Id}_{k_{j}} \right\}_{j=1}^{n}.
\end{equation}
\end{theorem}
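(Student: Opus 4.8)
The plan is to deduce Theorem~\ref{MBEK} directly from Theorem~\ref{MCH} by specializing to $z_j = is_j$ with $s_j \in \mathbb{R}$ and unwinding the pointwise condition \eqref{localcc}. The structural fact that makes this clean is that $1-z_j^2 = 1+s_j^2$ is real and positive, so the right-hand side of \eqref{localcc} loses its complex twisting.

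First I would split each test vector into real and imaginary parts, $w_j = u_j + i v_j$ with $u_j, v_j \in \mathbb{R}^{k_j}$, and record that $\Re w_j = u_j$, that $\Re(z_j w_j) = \Re\!\big(i s_j(u_j + i v_j)\big) = -s_j v_j$, and that $\Re(w_j w_j^T) = |u_j|^2 - |v_j|^2$. Substituting into \eqref{localcc} and writing $u = (u_1,\dots,u_n)$, $v = (v_1,\dots,v_n) \in \mathbb{R}^{k_1+\cdots+k_n}$, $S = \mathrm{diag}\{s_j \mathrm{Id}_{k_j}\}$ and $D = \mathrm{diag}\{\tfrac{1+s_j^2}{p_j}\mathrm{Id}_{k_j}\}$, the condition becomes
\[
\langle u, \mathrm{cov}(\xi)\, u\rangle - \alpha\, \langle S v, \mathrm{cov}(\xi)\, S v\rangle \;\ge\; \langle u, D u\rangle - \langle v, D v\rangle \qquad \text{for all } u,\, v,
\]
where $\mathrm{cov}(\xi)$ is the block matrix with $(i,j)$-block $\mathrm{cov}(\xi_i,\xi_j)$. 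Since as $w_j$ ranges over $\mathbb{C}^{k_j}$ the pair $(u_j,v_j)$ ranges independently over all of $\mathbb{R}^{k_j}\times\mathbb{R}^{k_j}$, this displayed inequality is exactly equivalent to \eqref{localcc} in this specialization.

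The key observation is that this quadratic form separates: it equals $\langle u, (\mathrm{cov}(\xi) - D)u\rangle + \langle v, (D - \alpha S\,\mathrm{cov}(\xi)\,S)v\rangle$, with no cross term between $u$ and $v$. Hence it is nonnegative for all $(u,v)$ if and only if each summand is nonnegative separately (necessity by setting $v=0$, resp.\ $u=0$; sufficiency by adding), i.e.\ iff $\mathrm{cov}(\xi) \ge D$ and $D \ge \alpha S\,\mathrm{cov}(\xi)\,S$. The first is precisely the right-hand inequality of \eqref{MBEKloc}. For the second, conjugating by $S^{-1}$ (legitimate when every $s_j \ne 0$) gives $\tfrac1\alpha S^{-1} D S^{-1} \ge \mathrm{cov}(\xi)$, and $S^{-1} D S^{-1} = \mathrm{diag}\{\tfrac{1+s_j^2}{s_j^2 p_j}\mathrm{Id}_{k_j}\}$, which is the left-hand inequality of \eqref{MBEKloc}. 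Feeding this equivalence back into Theorem~\ref{MCH} proves both directions of Theorem~\ref{MBEK}.

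The one point needing care — not a genuine obstacle — is the degenerate case $s_j = 0$ for some $j$: then $T_{0} f_j = \mathbb{E} f_j(\xi_j)$ is the constant term, $S$ is no longer invertible, and $\tfrac{1+s_j^2}{s_j^2 p_j} = +\infty$, so the left inequality of \eqref{MBEKloc} must be read as imposing no constraint on the $j$-th diagonal block. This is consistent with the separated form above: when $s_j=0$ the $v_j$-block drops out of the $\alpha S\,\mathrm{cov}(\xi)\,S$ term entirely (leaving only the automatically satisfied $\langle v_j, \tfrac1{p_j} v_j\rangle \ge 0$), while $\mathrm{cov}(\xi) \ge D$ still governs the $u_j$-block. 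Beyond this bookkeeping there is nothing to do, since Theorem~\ref{MCH} already contains all the analytic content (polynomial density/closure, the Beckner--Janson flow, and so on); I expect the only mild subtlety to be exactly the real/imaginary-part bookkeeping that allows the two operator inequalities in \eqref{MBEKloc} to be extracted one from the $u$-part and one from the $v$-part.
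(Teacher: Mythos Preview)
Your proposal is correct and is essentially the same argument as the paper's own proof: both specialize Theorem~\ref{MCH} to $z_j=is_j$, split $w_j$ into real and imaginary parts, observe that the resulting quadratic form in \eqref{localcc} decouples into a $\Re w$-part and an $\Im w$-part with no cross term, and read off the two matrix inequalities in \eqref{MBEKloc} from those two pieces. Your explicit remark on the degenerate case $s_j=0$ is a small bonus that the paper leaves implicit.
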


In the theorem the symbol $\mathrm{diag}$ stands for a diagonal matrix, and the diagonals are filled by block-diagonal matrices. The notation $A\geq B$ for matrices $A, B$ means that the matrix $A-B$ is positive semidefinite.

Theorem~\ref{MBEK} may seem a bit abstract compared to Beckner's Thoerem~\ref{bekth}. If we let $p_{1}=\ldots=p_{n}=p$, $\alpha =q/p$, and $s_{1}=s_{2}=\ldots=s_{n}$ in Theorem~\ref{MBEK} then we obtain a result which extends Beckner's result to several functions.

\begin{theorem}[Multiversion of Beckner's inequality]\label{MBEKS} 
   The inequality 
    \begin{align}\label{MBEKSglob}
    \left\| \prod_{j=1}^{n} T_{i\sqrt{p \lambda_{\min}-1}}f_{j}(\xi_{j})\right\|_{q} \leq \left\| \prod_{j=1}^{n} f_{j}(\xi_{j})\right\|_{p}
    \end{align}
    holds for all polynomials $f_{j} \in \mathcal{P}(\mathbb{R}^{k_{j}})$, $j=1, \ldots, n,$ if 
    \begin{align}\label{MBEKSloc}
        \frac{1}{p \lambda_{\min}} + \frac{1}{q \lambda_{\max}}=1, \quad 1\leq p\leq q. 
    \end{align}
\end{theorem}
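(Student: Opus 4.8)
The plan is to obtain Theorem~\ref{MBEKS} as a direct specialization of Theorem~\ref{MBEK}. First I would set $p_{1}=\dots=p_{n}=p$, take $\alpha=q/p$ (which satisfies $\alpha\geq 1$ precisely because $q\geq p$), and choose the common imaginary parameter $s_{1}=\dots=s_{n}=s$ with $s:=\sqrt{p\lambda_{\min}-1}$. Note that $s$ is a well-defined real number: since the two summands in $\frac{1}{p\lambda_{\min}}+\frac{1}{q\lambda_{\max}}=1$ are positive, we get $\frac{1}{p\lambda_{\min}}<1$, i.e.\ $p\lambda_{\min}>1$.

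Next I would verify that the matrix condition \eqref{MBEKloc} holds for these parameters. Because all $p_{j}$ equal $p$ and all $s_{j}$ equal $s$, each block-diagonal matrix in \eqref{MBEKloc} is a scalar multiple of the identity on $\mathbb{R}^{k_{1}+\dots+k_{n}}$: the right-hand one is $\frac{1+s^{2}}{p}\,\mathrm{Id}$ and the left-hand one is $\frac{p}{q}\cdot\frac{1+s^{2}}{s^{2}p}\,\mathrm{Id}=\frac{1+s^{2}}{qs^{2}}\,\mathrm{Id}$. For a symmetric matrix $A$, $A\geq c\,\mathrm{Id}$ is equivalent to $\lambda_{\min}(A)\geq c$ and $c\,\mathrm{Id}\geq A$ to $\lambda_{\max}(A)\leq c$, so \eqref{MBEKloc} reduces to the two scalar inequalities $\lambda_{\min}\geq\frac{1+s^{2}}{p}$ and $\lambda_{\max}\leq\frac{1+s^{2}}{qs^{2}}$. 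The first is an equality by the very choice of $s$, since $1+s^{2}=p\lambda_{\min}$. For the second, substituting $1+s^{2}=p\lambda_{\min}$ and $s^{2}=p\lambda_{\min}-1$ turns the right side into $\frac{p\lambda_{\min}}{q(p\lambda_{\min}-1)}$, while the hypothesis \eqref{MBEKSloc} rearranges to $\frac{1}{q\lambda_{\max}}=1-\frac{1}{p\lambda_{\min}}=\frac{p\lambda_{\min}-1}{p\lambda_{\min}}$, i.e.\ $\lambda_{\max}=\frac{p\lambda_{\min}}{q(p\lambda_{\min}-1)}$; so the second inequality also holds (with equality).

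Then Theorem~\ref{MBEK} applies and yields \eqref{nfuComImagi} for these parameters, and it remains only to unwind this into the stated form. Put $G:=\prod_{j=1}^{n}f_{j}(\xi_{j})$ and $G_{s}:=\prod_{j=1}^{n}T_{is}f_{j}(\xi_{j})$. The left side of \eqref{nfuComImagi} equals $\bigl(\mathbb{E}\,|G_{s}|^{q}\bigr)^{p/q}=\|G_{s}\|_{q}^{p}$, since $\bigl(\prod_{j}|T_{is}f_{j}(\xi_{j})|^{p}\bigr)^{q/p}=|G_{s}|^{q}$; its right side equals $\mathbb{E}\,|G|^{p}=\|G\|_{p}^{p}$. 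Taking $p$-th roots gives exactly \eqref{MBEKSglob} with $s=\sqrt{p\lambda_{\min}-1}$.

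I do not anticipate a genuine obstacle: the whole analytic content is carried by Theorem~\ref{MBEK}, and the only points needing a little care are the reality of $s$ (guaranteed by $p\lambda_{\min}>1$), the equivalence $\alpha=q/p\geq 1\Leftrightarrow q\geq p$, and the elementary reduction of \eqref{MBEKloc} to the extreme eigenvalues of $\mathrm{cov}(\xi)$. It is worth noting that, under \eqref{MBEKSloc}, both scalar constraints in the second step in fact reduce to $s^{2}\leq p\lambda_{\min}-1$, so $s=\sqrt{p\lambda_{\min}-1}$ is the extremal admissible value and gives the strongest inequality of this type (a larger modulus of the Mehler parameter being a stronger statement); this is why precisely that value appears in the statement.
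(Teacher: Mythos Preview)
Your proposal is correct and follows essentially the same route as the paper: specialize Theorem~\ref{MBEK} with $p_{1}=\dots=p_{n}=p$, $\alpha=q/p$, $s_{1}=\dots=s_{n}=s=\sqrt{p\lambda_{\min}-1}$, and then reduce \eqref{MBEKloc} to the two scalar eigenvalue conditions, both of which hold with equality under \eqref{MBEKSloc}. Your write-up is in fact slightly more detailed than the paper's (you justify that $s$ is real and unwind the $\|\cdot\|_{\alpha}$ notation explicitly), but the argument is the same.
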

In the case $n=1$ we have $\lambda_{\min}=\lambda_{\max}=1$ and, therefore, we recover  Beckner's inequality (\ref{bekeka}). The case $n=2$ of Theorem~\ref{MBEKS} gives the following 
\begin{theorem}[$\rho$-correlated Beckner's inequality]\label{rcor}
Let $X$ and  $X_{\rho}$ be $\rho$-correlated standard Gaussians in $\mathbb{R}^{n}$, i.e., $\mathrm{cov}(X, X_{\rho})=\rho \mathrm{Id}_{n}$,  having joint Gaussian distribution. The inequality 
\begin{align*}
   \| T_{i\sqrt{p(1-|\rho|)-1}}f(X)T_{i\sqrt{p(1-|\rho|)-1}}g(X_{\rho})\|_{q} \leq \|f(X)g(X_{\rho})\|_{p}
\end{align*}
holds for all polynomials $f, g \in  \mathcal{P}(\mathbb{R}^{n})$  if 
\begin{align*}
    \frac{1}{p(1-|\rho|)}+\frac{1}{q(1+|\rho|)}=1, \quad 1\leq p\leq q. 
\end{align*}
\end{theorem}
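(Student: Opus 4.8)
The plan is to read Theorem~\ref{rcor} off directly as the two-function instance of Theorem~\ref{MBEKS}, so that the only genuine task is to identify the spectral data of the pair $(X,X_\rho)$. Recall the logical chain already in place: Theorems~\ref{MCH} and \ref{MBEK} are the sharp ``if and only if'' characterizations, Theorem~\ref{MBEKS} is their specialization to equal exponents and equal Mehler parameters (giving a clean sufficient condition in terms of $\lambda_{\min},\lambda_{\max}$), and Theorem~\ref{rcor} is merely the case of two blocks with a prescribed cross-covariance. Hence I would not re-run the Beckner--Janson flow at all; I would just substitute.

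First I would write the joint covariance matrix of $\xi=(\xi_1,\xi_2):=(X,X_\rho)$. Since $X$ and $X_\rho$ are standard Gaussians on $\mathbb{R}^n$ with $\mathrm{cov}(X,X_\rho)=\rho\,\mathrm{Id}_n$,
\[
\mathrm{cov}(\xi)=\begin{pmatrix}\mathrm{Id}_n & \rho\,\mathrm{Id}_n\\[2pt]\rho\,\mathrm{Id}_n & \mathrm{Id}_n\end{pmatrix}
=\begin{pmatrix}1 & \rho\\ \rho & 1\end{pmatrix}\otimes\mathrm{Id}_n .
\]
The $2\times2$ factor has eigenvalues $1+\rho$ and $1-\rho$, so $\mathrm{cov}(\xi)$ has spectrum $\{1+\rho,\,1-\rho\}$, each of multiplicity $n$; consequently $\lambda_{\max}=1+|\rho|$ and $\lambda_{\min}=1-|\rho|$ (the absolute value accounts for the ordering when $\rho<0$), and the full-rank hypothesis is exactly $|\rho|<1$. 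Feeding $\lambda_{\min}=1-|\rho|$ and $\lambda_{\max}=1+|\rho|$ into Theorem~\ref{MBEKS} converts the Mehler parameter $i\sqrt{p\lambda_{\min}-1}$ into $i\sqrt{p(1-|\rho|)-1}$, converts the normalization $\frac1{p\lambda_{\min}}+\frac1{q\lambda_{\max}}=1$ of \eqref{MBEKSloc} into $\frac1{p(1-|\rho|)}+\frac1{q(1+|\rho|)}=1$, and leaves $1\le p\le q$ untouched, and with $f_1=f$, $f_2=g$, $k_1=k_2=n$ the left- and right-hand sides become precisely those in Theorem~\ref{rcor}. One should also note that the Mehler parameter is well defined: from the normalization and $\frac1{q(1+|\rho|)}>0$ we get $\frac1{p(1-|\rho|)}<1$, i.e.\ $p(1-|\rho|)\ge1$.

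As a cross-check (and a self-contained alternative) I would instead verify condition \eqref{MBEKloc} of Theorem~\ref{MBEK} directly, with two blocks, $k_1=k_2=n$, $p_1=p_2=p$, $\alpha=q/p$, and $s_1=s_2=s:=\sqrt{p(1-|\rho|)-1}$, so that $1+s^2=p(1-|\rho|)$. The right-hand inequality in \eqref{MBEKloc} reads $\mathrm{cov}(\xi)\ge(1-|\rho|)\,\mathrm{Id}_{2n}$, which holds because $\lambda_{\min}(\mathrm{cov}(\xi))=1-|\rho|$; and a one-line computation using the normalization gives $\frac{p}{q}\cdot\frac{1+s^2}{s^2p}=1+|\rho|=\lambda_{\max}(\mathrm{cov}(\xi))$, which is exactly the left-hand inequality. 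In either route there is essentially no analytic obstacle — all of the real content is carried by Theorems~\ref{MCH}--\ref{MBEKS} via the condition \eqref{localcc} — and the only points needing care are the eigenvalue ordering (hence the appearance of $|\rho|$ rather than $\rho$) and the explicit recording of the admissibility constraints $|\rho|<1$ and $p(1-|\rho|)\ge1$.
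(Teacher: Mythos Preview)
Your proposal is correct and follows exactly the paper's approach: compute the eigenvalues $\lambda_{\min}=1-|\rho|$, $\lambda_{\max}=1+|\rho|$ of the block covariance matrix and invoke Theorem~\ref{MBEKS}. The paper's proof is in fact even terser than yours (it records only the covariance matrix and its extreme eigenvalues), so your additional cross-check via \eqref{MBEKloc} and the well-definedness remark are extra care rather than a different route.
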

\begin{remark}
    Notice that $0$-correlated case of Theorem~\ref{rcor} with $f=g$ coincides with Theorem~B due to Beckner. 
\end{remark}

Theorem~\ref{MBEK} together with the identity (\ref{forfur}) will give us a general form  of Hausdorff--Young inequality for several functions. The following two corollaries present reformulations of Theorem~\ref{MBEK} and Theorem~\ref{MBEKS} in the language of Fourier transforms. We chose to state them separately to make the results accessible and engaging to readers from both the harmonic analysis and probability audiences, allowing each to appreciate the statements in their own familiar framework.

\begin{theorem}[general $n$-function Hausdorff--Young inequality]\label{nhsin}
 Let $\alpha\geq 1$ and $\eta_{j} = \frac{\sqrt{t_{j}-1}}{t_{j}} \xi_{j} $, $j=1, \ldots, n$.   Then 	
		\begin{align}\label{hutRATIOineq}
			\left\|\prod_{j=1}^{n} \left|(\widehat{g}_{j}/ \widehat{e_{t_{j}}})(\eta_{j})\right|^{p_{j}}\right\|_{\alpha} \leq \left\|\prod_{j=1}^{n} \left|(g_{j}/ e_{t_{j}})(\xi_{j})\right|^{p_{j}}\right\|_{1}
		\end{align}
		holds for all $g_{j}\in \mathcal{P}(\mathbb{R}^{k_{j}}) e_{t_{j}}$, $j=1, \ldots, n,$ if and only if
        \begin{align}\label{MBEKloc1}
\frac{1}{\alpha}\mathrm{diag}\left\{ \frac{t_{j}}{(t_{j}-1)p_{j}}\mathrm{Id}_{k_{j}} \right\}_{j=1}^{n} \geq \mathrm{cov}(\xi) \geq \mathrm{diag}\left\{ \frac{t_{j}}{p_{j}}\mathrm{Id}_{k_{j}} \right\}_{j=1}^{n}.
        \end{align}
\end{theorem}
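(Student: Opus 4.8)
The plan is to deduce Theorem~\ref{nhsin} directly from Theorem~\ref{MBEK} by a change of notation, using Mehler's identity \eqref{forfur} to convert Mehler transforms into Fourier transforms and the symmetry of centered Gaussian vectors to match arguments; this is precisely the multifunction analogue of the passage from Beckner's Theorem~\ref{bekth} to the sharp Hausdorff--Young inequality in the form \eqref{HSd1}. First I would set $s_{j} = \sqrt{t_{j}-1}$ for each $j=1,\dots,n$, so that $s_{j}^{2} = t_{j}-1$ and $1+s_{j}^{2} = t_{j}$ (the degenerate endpoint $t_{j}=1$, i.e.\ $s_{j}=0$, is excluded or recovered by letting $t_{j}\downarrow 1$). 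Under this substitution the two-sided operator inequality \eqref{MBEKloc} becomes, block by block, exactly \eqref{MBEKloc1}, since $\frac{1+s_{j}^{2}}{s_{j}^{2}p_{j}} = \frac{t_{j}}{(t_{j}-1)p_{j}}$ and $\frac{1+s_{j}^{2}}{p_{j}} = \frac{t_{j}}{p_{j}}$. Thus the hypothesis \eqref{MBEKloc1} is nothing but the hypothesis \eqref{MBEKloc} of Theorem~\ref{MBEK} rewritten in the parameters $t_{j}$.

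Next I would identify the two functional inequalities. Given $g_{j}\in\mathcal{P}(\mathbb{R}^{k_{j}})e_{t_{j}}$, set $f_{j} := g_{j}/e_{t_{j}}\in\mathcal{P}(\mathbb{R}^{k_{j}})$; since $e_{t_{j}}>0$, the correspondence $f_{j}\leftrightarrow g_{j}$ is a bijection between these classes, so it suffices to match the two sides of \eqref{hutRATIOineq} with the two sides of \eqref{nfuComImagi}. The right-hand sides coincide verbatim: $\|\prod_{j}|f_{j}(\xi_{j})|^{p_{j}}\|_{1} = \|\prod_{j}|(g_{j}/e_{t_{j}})(\xi_{j})|^{p_{j}}\|_{1}$. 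For the left-hand sides I would apply \eqref{forfur} to get $T_{is_{j}}f_{j}(x) = (\widehat{g_{j}}/\widehat{e_{t_{j}}})(-s_{j}x/t_{j})$ for each $j$. The argument carries a sign, but since $\xi=(\xi_{1},\dots,\xi_{n})$ is jointly Gaussian and centered, $(-\xi_{1},\dots,-\xi_{n})$ has the same law as $\xi$; hence the random vector $\bigl(T_{is_{1}}f_{1}(\xi_{1}),\dots,T_{is_{n}}f_{n}(\xi_{n})\bigr)$ has the same joint distribution as $\bigl((\widehat{g_{1}}/\widehat{e_{t_{1}}})(\eta_{1}),\dots,(\widehat{g_{n}}/\widehat{e_{t_{n}}})(\eta_{n})\bigr)$ with $\eta_{j} = s_{j}\xi_{j}/t_{j} = \frac{\sqrt{t_{j}-1}}{t_{j}}\xi_{j}$, exactly the $\eta_{j}$ of the statement. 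Raising to the powers $p_{j}$, multiplying over $j$, and taking the $\alpha$-th moment of both sides then identifies \eqref{hutRATIOineq} with \eqref{nfuComImagi}.

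Combining the two identifications with the ``if and only if'' of Theorem~\ref{MBEK} yields the equivalence of \eqref{hutRATIOineq} with \eqref{MBEKloc1}, which is the assertion of Theorem~\ref{nhsin}. There is no genuine analytic difficulty here; the only point that must be handled with care is the use of the \emph{joint} symmetry of $(\xi_{1},\dots,\xi_{n})$ when discarding the sign in the argument of $\widehat{g_{j}}$ --- the marginal symmetry of each $\xi_{j}$ alone would not suffice because the $\xi_{j}$ are correlated --- together with a brief treatment of the degenerate endpoints $t_{j}=1$ and a check that all integrals (of polynomial times Gaussian density) converge so that the moment manipulations are legitimate.
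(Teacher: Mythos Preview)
Your proposal is correct and follows essentially the same approach as the paper: set $s_{j}=\sqrt{t_{j}-1}$, use the Mehler identity \eqref{forfur} to rewrite $T_{is_{j}}f_{j}$ as a ratio of Fourier transforms, identify the conditions \eqref{MBEKloc} and \eqref{MBEKloc1}, and invoke Theorem~\ref{MBEK}. Your explicit handling of the sign via the joint symmetry $(\xi_{1},\dots,\xi_{n})\overset{d}{=}(-\xi_{1},\dots,-\xi_{n})$ is a detail the paper leaves implicit.
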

The next Theorem is a reformulation of Theorem~\ref{MBEKS} in terms of Fourier transform. 
\begin{theorem}[$L^{p}$ to $L^{q}$ version of  $n$-function Hausdorff--Young inequality]\label{pqHS}
Let  $\mu := \frac{\sqrt{p\lambda_{\min}-1}}{p\lambda_{\min}}$.    The inequality 
   		\begin{align}\label{nHS}
			\left\|e^{\frac{|\xi|^{2}}{2q \lambda_{\max}}}\prod_{j=1}^{n} \widehat{g}_{j}(\mu \xi_{j})\right\|_{q} \leq (p\lambda_{\min})^{\frac{\sum k_{j}}{2}} \left\|e^{\frac{|\xi|^{2}}{2p \lambda_{\min}}} \prod_{j=1}^{n} g_{j}(\xi_{j})\right\|_{p}
		\end{align}
   holds for all $g_{j}\in \mathcal{P}(\mathbb{R}^{k_{j}}) e_{p \lambda_{\min}}$, $j=1, \ldots, n,$ if
    \begin{align*}
        \frac{1}{p \lambda_{\min}} + \frac{1}{q \lambda_{\max}}=1, \quad 1\leq p\leq q. 
    \end{align*}
    The constant $(p\lambda_{\min})^{\frac{\sum k_{j}}{2}}$ is sharp in (\ref{nHS}) and it is achieved if   $g_{j}(\xi_{j})=e_{p \lambda_{\min}}(\xi_{j})$ for all $j=1, \ldots, n$. 
    
\end{theorem}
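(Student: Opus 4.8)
The plan is to obtain \eqref{nHS} as a verbatim translation of Theorem~\ref{MBEKS} into Fourier language via the Mehler--Fourier identity \eqref{forfur}, exactly as \eqref{HSd1} was extracted from \eqref{bekeka} in the one-function case. Abbreviate $t:=p\lambda_{\min}$. The constraint $\tfrac{1}{p\lambda_{\min}}+\tfrac{1}{q\lambda_{\max}}=1$ forces $t>1$, so $\mu=\tfrac{\sqrt{t-1}}{t}>0$ and \eqref{forfur}, used in $\mathbb{R}^{k_j}$ for each $j$, reads $T_{i\sqrt{t-1}}f_j(\xi_j)=\tfrac{\widehat{f_je_t}}{\widehat{e_t}}(-\mu\xi_j)$. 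Putting $g_j:=f_je_t$---which is a bijection between $\mathcal P(\mathbb R^{k_j})$ and $\mathcal P(\mathbb R^{k_j})e_{t}$, matching the hypothesis on the $g_j$---and using $\widehat{e_t}(\omega)=t^{k_j/2}e^{-t|\omega|^2/2}$ together with $\mu^2t=\tfrac{t-1}{t}$, I get
\[
T_{i\sqrt{t-1}}f_j(\xi_j)=t^{-k_j/2}\,e^{\frac{(t-1)|\xi_j|^2}{2t}}\,\widehat{g_j}(-\mu\xi_j).
\]

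Next I would multiply over $j=1,\dots,n$, use $|\xi|^2=\sum_j|\xi_j|^2$, and invoke the identity $\tfrac{t-1}{t}=1-\tfrac{1}{p\lambda_{\min}}=\tfrac{1}{q\lambda_{\max}}$ that again comes from the constraint; this rewrites the left-hand side of \eqref{MBEKSglob} as $t^{-\sum k_j/2}\big\|\,e^{\frac{|\xi|^2}{2q\lambda_{\max}}}\prod_j\widehat{g_j}(-\mu\xi_j)\,\big\|_q$. Since $-\xi$ has the same joint Gaussian law as $\xi$ and $|{-\xi}|^2=|\xi|^2$, the reflection $\xi_j\mapsto-\xi_j$ removes the minus signs inside the $\widehat{g_j}$ without changing the $q$-norm. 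On the right-hand side of \eqref{MBEKSglob} one has $\prod_j f_j(\xi_j)=e^{\frac{|\xi|^2}{2p\lambda_{\min}}}\prod_j g_j(\xi_j)$. Substituting these two expressions into \eqref{MBEKSglob} and moving the constant $t^{-\sum k_j/2}=(p\lambda_{\min})^{-\sum k_j/2}$ to the other side gives precisely \eqref{nHS}.

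For the sharpness of $(p\lambda_{\min})^{\sum k_j/2}$ I would simply take each $f_j$ to be the constant polynomial $1$, i.e. $g_j=e_{p\lambda_{\min}}$. Then $T_{i\sqrt{t-1}}1=1$ because the constant has only the $\beta=0$ Fourier--Hermite coefficient, so both sides of \eqref{MBEKSglob} equal $1$; as every step of the translation above is an equality, the same $g_j$ turns \eqref{nHS} into an equality, so the constant is attained and hence optimal.

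There is no genuine obstacle here---the statement is a change-of-variables reformulation of the already-established Theorem~\ref{MBEKS}---so the only points demanding care are the bookkeeping of the three Gaussian weights $e_{p\lambda_{\min}}$, $\widehat{e_t}$, and $e^{|\xi|^2/(2q\lambda_{\max})}$, the correct reading of the constraint to see $\tfrac{t-1}{t}=\tfrac1{q\lambda_{\max}}$ (equivalently $\mu^2t=\tfrac{t-1}{t}$), and the harmless symmetrization step needed to match the sign of the argument of $\widehat{g_j}$ with the form stated in \eqref{nHS}.
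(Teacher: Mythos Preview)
Your proposal is correct and follows essentially the same route as the paper: translate Theorem~\ref{MBEKS} into Fourier language via the Mehler identity \eqref{forfur} with $t=p\lambda_{\min}$, set $g_j=f_je_t$, collect the Gaussian weights using $\tfrac{t-1}{t}=\tfrac{1}{q\lambda_{\max}}$, and test sharpness on $f_j\equiv 1$. Your explicit mention of the symmetry $\xi\overset{d}{=}-\xi$ to pass from $\widehat{g_j}(-\mu\xi_j)$ to $\widehat{g_j}(\mu\xi_j)$ is a point the paper leaves implicit.
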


The application of the Theorem \ref{pqHS} to $n=2$ gives the following.
\begin{theorem}[$\rho$-correlated Hausdorff--Young inequality]\label{rhoHSI}
Let $\rho \in [0,1)$, and the exponents $q\geq p \geq 1$ satisfy 
\begin{align}\label{rhoHSIloc}
    \frac{1}{p(1-\rho)}+\frac{1}{q(1+\rho)}=1.
\end{align}
Then the inequality
\begin{align*}    \left[\int_{\mathbb{R}^{n}}\int_{\mathbb{R}^{n}}|\widehat{f}(x)\widehat{g}(y) |^{q} e^{-\frac{\rho p q|x-y|^{2}}{2}}\mathrm{d}x\mathrm{d}y  \right]^{1/q} \leq C\left[\int_{\mathbb{R}^{n}}\int_{\mathbb{R}^{n}}\left|f(x)g(y) \right|^{p}e^{\frac{\rho|x+y|^{2}}{2(1-\rho^{2})}}\mathrm{d}x\mathrm{d}y \right]^{1/p}
\end{align*}
holds for all  $f,g \in \mathcal{P}(\mathbb{R}^{n})e_{p(1-\rho)}$, and the best possible constant 
$$
C = \frac{(p(1-\rho))^{n-\frac{n}{q}}}{(q(1+\rho))^{\frac{n}{q}}} \frac{1}{\left(2\pi \sqrt{1-\rho^{2}}\right)^{\frac{n}{p}-\frac{n}{q}}}
$$
which is achieved for $f=g=e_{p(1-\rho)}$.  
\end{theorem}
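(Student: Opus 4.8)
The plan is to obtain Theorem~\ref{rhoHSI} as the two-function instance of Theorem~\ref{pqHS}, with the covariance specialized to the $\rho$-correlated case, and then simply unwind the Gaussian expectations into Lebesgue integrals. Concretely, I would apply Theorem~\ref{pqHS} with two functions on $\mathbb{R}^n$ (so $k_1=k_2=n$) and $(\xi_1,\xi_2)=(X,X_\rho)$, whose joint covariance is the $2n\times 2n$ block matrix $\bigl(\begin{smallmatrix}\mathrm{Id}_n & \rho\,\mathrm{Id}_n\\ \rho\,\mathrm{Id}_n & \mathrm{Id}_n\end{smallmatrix}\bigr)$; testing it on vectors of the form $(e,\pm e)$ shows its eigenvalues are $1\pm\rho$, so for $\rho\in[0,1)$ we have $\lambda_{\min}=1-\rho$ and $\lambda_{\max}=1+\rho$. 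Then the hypothesis $\tfrac1{p\lambda_{\min}}+\tfrac1{q\lambda_{\max}}=1$, $1\le p\le q$, of Theorem~\ref{pqHS} is exactly \eqref{rhoHSIloc}; since $\tfrac1{q(1+\rho)}>0$ this also forces $p(1-\rho)>1$, so the Mehler parameter and $\mu=\tfrac{\sqrt{p\lambda_{\min}-1}}{p\lambda_{\min}}=\tfrac{\sqrt{p(1-\rho)-1}}{p(1-\rho)}$ are real. Taking $g_1=f$, $g_2=g$, Theorem~\ref{pqHS} gives $\bigl\|e^{|\xi|^2/(2q(1+\rho))}\,\widehat f(\mu\xi_1)\,\widehat g(\mu\xi_2)\bigr\|_q\le(p(1-\rho))^n\bigl\|e^{|\xi|^2/(2p(1-\rho))}f(\xi_1)g(\xi_2)\bigr\|_p$, with $|\xi|^2=|\xi_1|^2+|\xi_2|^2$ and the norms taken with respect to the joint law of $(X,X_\rho)$.

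I would then write out both sides explicitly, using that this law has density $(2\pi)^{-n}(1-\rho^2)^{-n/2}\exp\!\bigl(-\tfrac{|x|^2-2\rho\,x\cdot y+|y|^2}{2(1-\rho^2)}\bigr)$ and raising the $L^q$ and $L^p$ norms to the powers $q$ and $p$. On the right, the exponent of the Gaussian density combines with $\tfrac{|x|^2+|y|^2}{2(1-\rho)}$; over the common denominator $2(1-\rho^2)$ the numerator is $(1+\rho)(|x|^2+|y|^2)-(|x|^2-2\rho\,x\cdot y+|y|^2)=\rho\,|x+y|^2$, which reproduces the weight $e^{\rho|x+y|^2/(2(1-\rho^2))}$ of the statement. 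On the left it combines with $\tfrac{|x|^2+|y|^2}{2(1+\rho)}$, giving numerator $-\rho\,|x-y|^2$ over $2(1-\rho^2)$. After the substitution $u=\mu x$, $v=\mu y$ (Jacobian $\mu^{-2n}$, and $|x-y|^2=\mu^{-2}|u-v|^2$) this weight becomes $e^{-\rho|u-v|^2/(2\mu^2(1-\rho^2))}$; here I would use \eqref{rhoHSIloc} in the form $p(1-\rho)-1=\tfrac{p(1-\rho)}{q(1+\rho)}$ to compute $\tfrac1{\mu^2(1-\rho^2)}=\tfrac{p^2(1-\rho)}{(p(1-\rho)-1)(1+\rho)}=pq$, so the left weight is exactly $e^{-\rho pq|u-v|^2/2}$, matching the statement.

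Finally I would collect constants. Dividing through, the resulting inequality holds with $C=(p(1-\rho))^n\,\mu^{2n/q}\,\bigl((2\pi)^n(1-\rho^2)^{n/2}\bigr)^{1/q-1/p}$. Writing the last factor as $(2\pi\sqrt{1-\rho^2})^{-(n/p-n/q)}$ and using $p(1-\rho)-1=\tfrac{p(1-\rho)}{q(1+\rho)}$ once more, so that $\mu^{2n/q}=\bigl(q(1+\rho)\,p(1-\rho)\bigr)^{-n/q}$, this collapses to $C=\tfrac{(p(1-\rho))^{\,n-n/q}}{(q(1+\rho))^{n/q}}\cdot\tfrac1{(2\pi\sqrt{1-\rho^2})^{n/p-n/q}}$, as claimed. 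Sharpness is inherited: Theorem~\ref{pqHS} says its constant $(p\lambda_{\min})^{\sum k_j/2}$ is attained at $g_1=g_2=e_{p\lambda_{\min}}$, and every manipulation above (scaling by explicit constants, a linear change of variables) preserves the equality case, so $C$ is best possible and is achieved at $f=g=e_{p(1-\rho)}$. The only thing that really needs care is the bookkeeping of exponents and Jacobian factors so that $C$ telescopes precisely to the stated expression; beyond that there is no genuine obstacle.
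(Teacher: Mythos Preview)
Your proposal is correct and follows essentially the same route as the paper: specialize Theorem~\ref{pqHS} to two $\rho$-correlated Gaussians (so $\lambda_{\min}=1-\rho$, $\lambda_{\max}=1+\rho$), then rewrite the Gaussian $L^p$ and $L^q$ norms as Lebesgue integrals and track the constants. The only difference is purely computational: the paper parametrizes $X_\rho=\rho X+\sqrt{1-\rho^2}\,Y$ with $Y$ independent and then makes the change of variables $x=\sqrt{1-\rho^2}\,u$, $y=v-\rho u$, whereas you work directly with the joint density $(2\pi)^{-n}(1-\rho^2)^{-n/2}\exp\!\bigl(-\tfrac{|x|^2-2\rho x\cdot y+|y|^2}{2(1-\rho^2)}\bigr)$ and only rescale by $\mu$ on the Fourier side; both yield the same exponents and the same constant $C$.
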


Notice that $0$-correlated case of the theorem with $f=g$ coincides with Beckner--Babenko inequality (\ref{HS1}) with the sharp constant. 

\vskip1cm
Another application of Theorem~\ref{MBEK} is reverse H\"older inequality for $d$-homogeneous Gaussian chaoses. We say that the polynomial $f : \mathbb{R}^{k}\mapsto \mathbb{C}$ is $d$-homogeneous Gaussian chaos if 
$$
f(x) = \sum_{|\alpha|=d}c_{\alpha}H_{\alpha}(x).
$$

\begin{theorem}[reverse H\"older for Gaussain chaoses]\label{chaoses}
 For any $q\geq p \geq 1/\lambda_{\min}$ we have 
  \begin{align*}
      \left\|\prod_{j=1}^{n}f_{j}(\xi_{j})\right\|_{q}\leq \max\left\{ \frac{1}{p\lambda_{\min}-1}, q\lambda_{\max}-1 \right\}^{\sum d_{j}/2}\left\|\prod_{j=1}^{n}f_{j}(\xi_{j})\right\|_{p}
  \end{align*}
  holds for all $d_{j}$-homogeneous Gaussian chaoses $f_{j} :\mathbb{R}^{k_{j}}\mapsto \mathbb{C}$, $j=1, \ldots, n$.
\end{theorem}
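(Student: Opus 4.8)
The plan is to deduce Theorem~\ref{chaoses} from the imaginary-parameter multiversion of Beckner's inequality, Theorem~\ref{MBEK}, by choosing the parameter $s$ optimally for homogeneous chaoses. First I would observe that if $f_j$ is a $d_j$-homogeneous Gaussian chaos, then by the very definition of the Mehler transform $T_{is}f_j = (is)^{d_j}f_j$, so that $|T_{is_j}f_j(\xi_j)| = |s_j|^{d_j}\,|f_j(\xi_j)|$ pointwise. Hence inequality~\eqref{nfuComImagi} of Theorem~\ref{MBEK}, applied with exponents $p_j$ and outer exponent $\alpha$, reads
\begin{align*}
\Bigl(\prod_{j=1}^n |s_j|^{d_j p_j}\Bigr)\,\Bigl\|\prod_{j=1}^n |f_j(\xi_j)|^{p_j}\Bigr\|_{\alpha} \leq \Bigl\|\prod_{j=1}^n |f_j(\xi_j)|^{p_j}\Bigr\|_{1},
\end{align*}
valid whenever~\eqref{MBEKloc} holds. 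Rearranged, this gives a reverse-type bound with constant $\prod_j |s_j|^{-d_j p_j}$ on the $\alpha$-moment in terms of the $1$-moment.

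Next I would make the substitution that turns this into the desired $L^q$--$L^p$ statement. Set all $p_j$ equal to $1$ (replacing $f_j$ by $f_j^{\,?}$ is not needed since the chaos structure is only used through $T_{is}$, so working directly with the product $\prod f_j(\xi_j)$ is cleanest), take $\alpha = q/p \geq 1$, and rename the moment exponents: the left side becomes $\|\prod_j f_j(\xi_j)\|_{q/p}$-type, and after raising to the power $1/p$ and absorbing, one arrives at $\|\prod_j f_j(\xi_j)\|_q \leq \bigl(\prod_j |s_j|^{-d_j}\bigr)\|\prod_j f_j(\xi_j)\|_p$ provided the covariance sandwich
\begin{align*}
\frac{p}{q}\,\mathrm{diag}\Bigl\{\tfrac{1+s_j^2}{s_j^2}\mathrm{Id}_{k_j}\Bigr\} \;\geq\; \mathrm{cov}(\xi) \;\geq\; \mathrm{diag}\Bigl\{(1+s_j^2)\,\mathrm{Id}_{k_j}\Bigr\}
\end{align*}
holds (this is~\eqref{MBEKloc} with $p_j=1$, $\alpha=q/p$, rescaled). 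Since $\mathrm{cov}(\xi)$ has all eigenvalues in $[\lambda_{\min},\lambda_{\max}]$, it suffices that $1+s_j^2 \leq \lambda_{\min}$ and $\frac{p}{q}\cdot\frac{1+s_j^2}{s_j^2} \geq \lambda_{\max}$ for every $j$; taking all $s_j$ equal to a common $s$ with $s^2 = p\lambda_{\min}-1$ makes the lower bound tight (this requires $p \geq 1/\lambda_{\min}$), and then $\frac{p}{q}\cdot\frac{p\lambda_{\min}}{p\lambda_{\min}-1} \geq \lambda_{\max}$ is exactly the hypothesis rearranged — one checks $\frac{1}{p\lambda_{\min}-1}\geq \frac{q\lambda_{\max}}{p\lambda_{\min}}-\cdots$; I would verify that the cleanest sufficient condition reduces to $q \geq p \geq 1/\lambda_{\min}$ together with the eigenvalue constraint being automatically satisfiable. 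With $s^2 = p\lambda_{\min}-1$ the constant is $|s|^{-\sum d_j} = (p\lambda_{\min}-1)^{-\sum d_j/2}$; symmetrically, choosing instead $s^2$ governed by the upper constraint (i.e. $\frac{p}{q}\cdot\frac{1+s^2}{s^2} = \lambda_{\max}$, giving $s^2 = \frac{p}{q\lambda_{\max}-p} $, but the homogeneity means $|s|^{-\sum d_j}$ could also be written via $q\lambda_{\max}-1$) yields the alternative constant $(q\lambda_{\max}-1)^{\sum d_j/2}$, and one keeps whichever is admissible — hence the $\max$.

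The main obstacle I anticipate is the bookkeeping in the two-sided covariance constraint: I must confirm that for $q\geq p\geq 1/\lambda_{\min}$ there exists an admissible common value $s$ (equivalently, that the interval of allowed $1+s^2$ is nonempty), and that the resulting constant is exactly $\max\{(p\lambda_{\min}-1)^{-1},\, q\lambda_{\max}-1\}^{\sum d_j/2}$ rather than something smaller. The point is that Theorem~\ref{MBEK} only gives inequality~\eqref{nfuComImagi} when~\eqref{MBEKloc} holds as a genuine matrix sandwich; with $s$ common to all blocks this collapses to the scalar conditions $1+s^2\leq \lambda_{\min}$ and $(1+s^2)/s^2 \geq (q/p)\lambda_{\max}$, and I need to show these are jointly solvable precisely under the stated hypothesis and that optimizing $|s|^{-\sum d_j}$ over the admissible window produces the claimed $\max$. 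A secondary point is handling the chaos homogeneity cleanly when the $f_j$ have different degrees $d_j$ but share the single parameter $s$: since $|T_{is}f_j(\xi_j)| = |s|^{d_j}|f_j(\xi_j)|$ regardless of $d_j$, the product picks up $|s|^{\sum d_j}$ and no per-$j$ tuning of $s$ is needed, so this is in fact not an obstacle once stated correctly. I would also remark that the hypothesis $p\geq 1/\lambda_{\min}$ is exactly what makes $T_{i\sqrt{p\lambda_{\min}-1}}$ a bona fide Mehler transform with real-for-chaos action, tying the statement back to Theorem~\ref{MBEKS}.
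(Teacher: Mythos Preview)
Your overall strategy is exactly the paper's: apply Theorem~\ref{MBEK} with a common parameter $s$, use the chaos homogeneity $T_{is}f_j=(is)^{d_j}f_j$ to extract $s^{\sum d_j}$, then optimize $s$ over the admissible window determined by the covariance sandwich. The genuine error is your choice of exponents. Setting $p_j=1$ and $\alpha=q/p$ in~\eqref{nfuComImagi} produces an $L^{q/p}$--$L^1$ comparison $\|\prod_j|T_{is}f_j|\|_{q/p}\le\|\prod_j|f_j|\|_1$, not an $L^q$--$L^p$ one, and there is no ``raising to the power $1/p$'' that converts between them: the identity $\|X^p\|_{q/p}^{1/p}=\|X\|_q$ would require the integrand to already be a $p$th power, and $f_j^p$ is not a homogeneous chaos, so $T_{is}$ does not act on it by scalar multiplication. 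Worse, the lower constraint you derive, $1+s^2\le\lambda_{\min}$, is infeasible for any $s>0$: since each diagonal block of $\mathrm{cov}(\xi)$ is the identity, the average eigenvalue is $1$ and hence $\lambda_{\min}\le1<1+s^2$. This is why your subsequent choice $s^2=p\lambda_{\min}-1$ does not actually satisfy the condition you wrote, and why your upper-constraint computation gives $1/s^2=q\lambda_{\max}/p-1$ rather than the claimed $q\lambda_{\max}-1$.

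The fix, which is what the paper does, is to take $p_1=\cdots=p_n=p$ (still with $\alpha=q/p$). Then~\eqref{nfuComImagi} reads $\|\prod_j|T_{is}f_j|^p\|_{q/p}\le\|\prod_j|f_j|^p\|_1$, which \emph{is} $\|\prod_j T_{is}f_j\|_q\le\|\prod_j f_j\|_p$, and the sandwich~\eqref{MBEKloc} becomes the scalar pair $\frac{1+s^2}{p}\le\lambda_{\min}$ and $\frac{1+s^2}{qs^2}\ge\lambda_{\max}$, i.e.\ $s\le\sqrt{p\lambda_{\min}-1}$ and $s\le 1/\sqrt{q\lambda_{\max}-1}$. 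Taking $s$ equal to the minimum of these two bounds gives $s^{-2}=\max\{1/(p\lambda_{\min}-1),\,q\lambda_{\max}-1\}$, which is the stated constant.
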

In Theorem~\ref{chaoses2}, we will derive similar results using real hypercontractivity for several functions. The advantage of complex hypercontractivity in the reverse Hölder inequality for Gaussian chaoses was only recently noted in \cite{IvTk}, where it was demonstrated that Beckner's inequality (\ref{bekeka}) implies the estimate $\|f(X)\|_{2} \leq e^{d/2} \|f(X)\|_{1}$ for any $d$-homogeneous Gaussian chaos $f$. Currently, $e^{d/2}$ represents the best known upper bound. In contrast, real hypercontractivity yields the upper bound $e^{d}$, which is less sharp. However, for certain values of $p$ and $q$, real hypercontractivity can provide better upper bounds on $\|f(X)\|_{q}/\|f(X)\|_{p}$. For further details, see \cite{IvTk}.

\subsection{Case of real $z_{j}$'s.}
For $r \in [-1,1]$ the noise operator $T_{r}f(x) = \mathbb{E}f(x r + \xi \sqrt{1-r^{2}})$ for all measurable $f :\mathbb{R}^{n} \to [0, \infty)$,  where $\xi \sim N(0, \mathrm{Id}_{n})$.

The next theorem shows that in the case  $(z_{1}, \ldots, z_{n}) \in \mathbb{R}^{n}$ the inequality (\ref{multipulti}) extends to all nonnegative measurable test functions $f_{j}$, and, moroever, the reverse form of Theorem~\ref{MCH} also holds true. 
\begin{theorem}[$n$-function forward and reverse  real hypercontractivity]\label{MRH}
Let $(r_{1}, \ldots, r_{n})\in [-1,1]^{n}$, and let $p_{j} \in \mathbb{R} \setminus \{0\}$, $j=1, \ldots, n$.  

\textup{(i)}
For $\alpha \in (-\infty, 0)\cup [1, \infty)$ the inequality 
 \begin{align}\label{multipultir}
 \| \prod_{j=1}^{n} (T_{r_{j}}f_{j}(\xi_{j}))^{p_{j}}\|_{\alpha} \leq \|  \prod_{j=1}^{n} f^{p_{j}}_{j}(\xi_{j})\|_{1}
 \end{align}
 holds for all measurable $f_{j} : \mathbb{R}^{k_{j}} \to (0, \infty)$ if and only if
 \begin{align}\label{localreal}
     \mathrm{diag}\left\{ \frac{1-r_{j}^{2}}{p_{j}  } \mathrm{Id}_{k_{j}}\right\}_{j=1}^{n} \leq \left\{(1-\alpha r_{u}r_{v})\mathrm{cov}(\xi_{u}, \xi_{v})\right\}_{u,v=1}^{n}.
 \end{align}

 \textup{(ii)} For $\alpha \in (0,1]$ the inequality 
 \begin{align}\label{multipultirrev}
 \| \prod_{j=1}^{n} (T_{r_{j}}f_{j}(\xi_{j}))^{p_{j}}\|_{\alpha} \geq \|  \prod_{j=1}^{n} f^{p_{j}}_{j}(\xi_{j})\|_{1}
 \end{align}
  holds for all measurable $f_{j} : \mathbb{R}^{k_{j}} \to (0, \infty)$ if and only if
 \begin{align}\label{localrealrev}
     \mathrm{diag}\left\{ \frac{1-r_{j}^{2}}{p_{j}} \mathrm{Id}_{k_{j}}\right\}_{j=1}^{n} \geq \left\{(1-\alpha r_{u}r_{v})\mathrm{cov}(\xi_{u}, \xi_{v})\right\}_{u,v=1}^{n}.
 \end{align}
\end{theorem}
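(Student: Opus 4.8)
\textbf{Proof proposal for Theorem~\ref{MRH}.}

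My plan is to treat this as a consequence of the abstract FB~theorem (Theorem~\ref{complexCaseTheorem}), specialized to the case of real parameters, with $F(x)=x^{\alpha}$ (or $x^{1/\alpha}$ for the reverse part) and $B(t_1,\dots,t_n)=\prod_j t_j^{p_j}$. The key observation is that for $r_j\in[-1,1]$ the noise operator $T_{r_j}$ is exactly the Mehler transform $T_{z_j}$ with $z_j=r_j$ real, so $1-z_j^2=1-r_j^2\ge 0$ is automatically real and nonnegative, and $\Re(z_jw_j)=r_j\Re w_j$. Feeding this into condition~\eqref{localcc} of Theorem~\ref{MCH}, the complex quadratic form collapses to a real one: the right-hand side becomes $\sum_j\frac{1-r_j^2}{p_j}|\Re w_j|^2$ (after taking real parts the imaginary components of $w_j$ decouple and only make the left side larger, so it suffices to test real $w_j$), and the left-hand side becomes $\sum_{u,v}(1-\alpha r_ur_v)\Re w_u\,\mathrm{cov}(\xi_u,\xi_v)(\Re w_v)^T$. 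Matching this with~\eqref{localreal} gives exactly the stated block-matrix inequality, which proves part~(i) for $\alpha\ge 1$ restricted to polynomial test functions; the extension to all measurable $f_j:\mathbb{R}^{k_j}\to(0,\infty)$ follows because $T_{r_j}$ with real $r_j$ is positivity-preserving and one approximates by polynomials (or appeals directly to the FB~theorem, which is stated for a general class of test functions in the real case).

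For the case $\alpha\in(-\infty,0)$ and for the reverse inequality in part~(ii) with $\alpha\in(0,1]$, the Mehler-transform route no longer applies directly since $x\mapsto x^\alpha$ is no longer convex; here I would invoke the FB~theorem with the appropriate (concave, resp.\ order-reversing) choice of $F$ and run the Beckner--Janson flow in the same way. The point is that the pointwise infinitesimal condition driving the flow is, in each case, precisely the quadratic-form inequality obtained by differentiating the candidate Bellman-type functional along the Ornstein--Uhlenbeck evolution; for the product power $B=\prod_j t_j^{p_j}$ this infinitesimal condition is a Gaussian quadratic form whose nonnegativity (for $\alpha\notin(0,1)$, giving the forward inequality) or nonpositivity (for $\alpha\in(0,1)$, giving the reverse) is equivalent to~\eqref{localreal}, resp.~\eqref{localrealrev}. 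The sign of $\alpha-1$ and $\alpha$ governs which direction the flow monotonicity goes, exactly as in classical real hypercontractivity and its reverse form due to Borell.

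For necessity, I would test on near-extremal functions: take $f_j(x)=e^{\langle a_j,x\rangle}$ (Gaussian exponentials) with small parameters $a_j\in\mathbb{R}^{k_j}$ and compute both sides explicitly using $\mathbb{E}\,e^{\langle a,\xi\rangle}=e^{\frac12 a^T\mathrm{cov}(\xi)a}$ and the fact that $T_{r_j}$ maps $e^{\langle a_j,x\rangle}$ to $e^{r_j\langle a_j,x\rangle+\frac{1-r_j^2}{2}|a_j|^2}$; the ratio of the two sides, expanded to second order in the $a_j$, yields precisely the quadratic form in~\eqref{localreal}, so the inequality forces the matrix condition. The main obstacle I anticipate is bookkeeping rather than conceptual: carefully tracking how the complex condition~\eqref{localcc} degenerates under $z_j\in\mathbb{R}$ so that the imaginary parts of the $w_j$ genuinely drop out (one must check that allowing $\Im w_j\ne 0$ only strengthens~\eqref{localcc}, hence is harmless), and separately verifying that the $\alpha<0$ and $\alpha\in(0,1]$ regimes, which are outside the scope of Theorem~\ref{MCH}, are correctly covered by the FB~theorem with the right convexity/monotonicity hypotheses on $F$. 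Making the measurable-function extension rigorous (integrability of $T_{r_j}f_j$, no degenerate $0\cdot\infty$ issues) is a minor additional point handled by the strict positivity assumption $f_j>0$.
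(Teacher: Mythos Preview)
Your high-level plan---specialize an FB-type theorem with $F(t)=t^{\alpha}$ and $B(t_{1},\dots,t_{n})=\prod_{j}t_{j}^{p_{j}}$, then compute the local condition---is exactly what the paper does. But you route through the \emph{complex} FB theorem (Theorem~\ref{complexCaseTheorem}), whereas the paper uses the \emph{real} FB theorem (Theorem~\ref{FBrealTheorem}), and this choice creates genuine gaps. Theorem~\ref{complexCaseTheorem} is stated under $F'>0$, $B_{m}>0$ (hence $p_{j}>0$), polynomial test functions, and the convexity of $(t,y)\mapsto \tfrac{F''}{F'}y^{2}$; none of these covers $\alpha<0$ (where $F'<0$), $p_{j}<0$ (allowed in Theorem~\ref{MRH}), measurable $f_{j}$, or the reverse inequality for $\alpha\in(0,1]$. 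Your handling of those regimes (``invoke the FB theorem with the appropriate concave, resp.\ order-reversing $F$'') is hand-waving at a theorem you have not cited. The fix is simply to use Theorem~\ref{FBrealTheorem}: there the hypothesis is $F'\neq 0$ of either sign, the convexity condition is on $\tfrac{F''}{|F'|}y^{2}=\tfrac{\alpha(\alpha-1)}{|\alpha|}\tfrac{y^{2}}{t}$, which is convex precisely for $\alpha\in(-\infty,0)\cup[1,\infty)$ and concave for $\alpha\in(0,1]$, the test functions are measurable, and the theorem already gives the biconditional. Computing $B_{i}=p_{i}B/t_{i}$, $B_{ij}=(p_{i}p_{j}-\delta_{ij}p_{i})B/(t_{i}t_{j})$, $F''/F'=(\alpha-1)/t$, the local condition~\eqref{Nlocal} becomes, after dividing out $B/(t_{i}t_{j})\cdot p_{i}p_{j}$, exactly the matrix inequality~\eqref{localreal}; the reverse case follows by the reverse direction of Theorem~\ref{FBrealTheorem}.

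Your reduction of~\eqref{localcc} to~\eqref{localreal} when $z_{j}=r_{j}\in\mathbb{R}$ is correct for $p_{j}>0$ (the $\Im w_{j}$ terms only help because $\frac{1-r_{j}^{2}}{p_{j}}\ge 0$), but fails if some $p_{j}<0$, which Theorem~\ref{MRH} allows. Your necessity argument via exponential test functions $f_{j}=e^{\langle a_{j},\cdot\rangle}$ is fine and in the same spirit as the paper's perturbative argument in Proposition~\ref{MFstatement} (linear test functions plus Taylor expansion); either yields the quadratic form~\eqref{localreal} at second order.
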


The condition (\ref{localreal}) is the  same as (\ref{localcc}) when $(z_{1}, \ldots, z_{n})=(r_{1}, \ldots, r_{n})$ are real numbers.

When $n=1$ Theorem~\ref{MRH}   coincides with real hypercontractivity due to Bonami,  Nelson \cite{Bonami, Nelson}, and reverse real hypercontractivity due to Borell \cite{Borell82}.

If we set $r_{1}=r_{2}=\ldots=r_{n}=0$ notice that the left hand side in (\ref{multipultir}) and in (\ref{multipultirrev}) takes the form
$$
\| \prod_{j=1}^{n} (T_{r_{j}}f_{j}(\xi_{j}))^{p_{j}}\|_{\alpha} = \prod_{j=1}^{n}\mathbb{E}(f_{j}(\xi_{j}))^{p_{j}}.
$$
In this case Theorem~\ref{MRH} recovers the following result.

\begin{atheorem}[Paouris, Chen, Dafnis \cite{ChenPaourDafn}]\label{PaourisDafnisChen}  Let $p_{j} \in \mathbb{R}\setminus \{0\}, \, j=1, \ldots, n$. The inequality 
 \begin{equation}\label{revePaour}
 	\prod_{j=1}^n\left(\mathbb{E} f_j(\xi_j)\right)^{p_j} \leq \mathbb{E} \prod_{j=1}^nf_j(\xi_j)^{p_j}
\end{equation}
holds for all measurable functions $f_{j}: \mathbb{R}^{k_{j}} \to (0,\infty)$, $j=1,\ldots,n$ if and only if
\begin{equation}\label{localPaourisDafnisChen}
  \mathrm{diag}\left(\frac{1}{p_1}\mathrm{Id}_{k_1},\ldots,\frac{1}{p_n}\mathrm{Id}_{k_n}\right) \leq \mathrm{cov}(\xi)
 \end{equation}
 Moreover, the inequality \eqref{revePaour} reverses if and only if the inequality \eqref{localPaourisDafnisChen} is reversed.
\end{atheorem}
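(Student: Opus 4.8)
The plan is to obtain Theorem~\ref{PaourisDafnisChen} as the degenerate \emph{zero-noise} specialization of Theorem~\ref{MRH}, which we are entitled to invoke. Concretely, in Theorem~\ref{MRH} I would take $r_{1}=\cdots=r_{n}=0$ and $\alpha=1$. The choice $\alpha=1$ is admissible in \emph{both} regimes of Theorem~\ref{MRH}, since $1$ belongs to $[1,\infty)$ (used in part (i)) and to $(0,1]$ (used in part (ii)); hence the forward inequality and its reverse specialize simultaneously, with no limiting argument.

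Next I would unwind what each side becomes at $r_{j}=0$. Because $T_{0}f_{j}(x)=\mathbb{E}\,f_{j}\!\left(0\cdot x+\xi\sqrt{1-0^{2}}\right)=\mathbb{E}\,f_{j}(\xi)=:c_{j}$ is a deterministic positive constant (and $c_{j}=\mathbb{E}\,f_{j}(\xi_{j})$ since $\xi_{j}$ is standard normal), the product $\prod_{j}\bigl(T_{0}f_{j}(\xi_{j})\bigr)^{p_{j}}=\prod_{j}c_{j}^{p_{j}}$ is itself a constant, so $\|\prod_{j}(T_{0}f_{j}(\xi_{j}))^{p_{j}}\|_{\alpha}=\prod_{j}c_{j}^{p_{j}}=\prod_{j}(\mathbb{E}\,f_{j}(\xi_{j}))^{p_{j}}$ for every $\alpha$. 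Thus inequality (\ref{multipultir}) becomes precisely $\prod_{j}(\mathbb{E}\,f_{j}(\xi_{j}))^{p_{j}}\le\mathbb{E}\prod_{j}f_{j}(\xi_{j})^{p_{j}}$, i.e.\ (\ref{revePaour}), while (\ref{multipultirrev}) becomes the reverse of (\ref{revePaour}).

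It then remains to match the side conditions. With every $r_{u}=0$ one has $1-\alpha r_{u}r_{v}=1$ for all $u,v$, so the block matrix $\{(1-\alpha r_{u}r_{v})\mathrm{cov}(\xi_{u},\xi_{v})\}_{u,v=1}^{n}$ is exactly $\mathrm{cov}(\xi)$, and $\frac{1-r_{j}^{2}}{p_{j}}\mathrm{Id}_{k_{j}}=\frac{1}{p_{j}}\mathrm{Id}_{k_{j}}$; hence (\ref{localreal}) is literally (\ref{localPaourisDafnisChen}) and (\ref{localrealrev}) is its reversal. Combining these, part (i) of Theorem~\ref{MRH} yields ``(\ref{localPaourisDafnisChen}) $\Leftrightarrow$ (\ref{revePaour})'' and part (ii) yields the corresponding reverse equivalence, which is the whole statement. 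The only point that warrants a second look — and realistically the only place anything could go wrong — is the legitimacy of inserting the boundary value $\alpha=1$ and the endpoint $r_{j}=0$ into Theorem~\ref{MRH}; since both are genuinely inside the stated ranges, no separate approximation step is needed. (If one instead wanted a proof independent of Theorem~\ref{MRH}, the natural route would be Gaussian interpolation along an Ornstein--Uhlenbeck-type semigroup deforming the decoupled law $N(0,\mathrm{Id}_{k_{1}})\times\cdots\times N(0,\mathrm{Id}_{k_{n}})$ into the true joint law of $\xi$, with monotonicity of the relevant functional reducing exactly to the semidefiniteness in (\ref{localPaourisDafnisChen}); but given the excerpt, the specialization above is the efficient argument.)
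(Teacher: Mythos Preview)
Your proposal is correct and is essentially identical to the paper's own derivation: the paper also obtains Theorem~\ref{PaourisDafnisChen} by setting $r_{1}=\cdots=r_{n}=0$ in Theorem~\ref{MRH}, observing that the left-hand side then collapses to the constant $\prod_{j}(\mathbb{E}f_{j}(\xi_{j}))^{p_{j}}$ and that the matrix condition \eqref{localreal} reduces to \eqref{localPaourisDafnisChen}. Your explicit choice $\alpha=1$, lying in both admissible ranges, is a clean way to access the forward and reverse parts simultaneously; the paper leaves $\alpha$ unspecified since the left-hand side is independent of it, but this is a cosmetic difference only.
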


As observed in~\cite{ChenPaourDafn}, the reverse inequality to~\eqref{revePaour} admits a geometric interpretation as the Brascamp--Lieb inequality \cite{BL76,Lieb}, once the usual geometric condition is replaced by the covariance-dominance condition (\emph{i.e.}, the reverse of~\eqref{localPaourisDafnisChen}). Furthermore, through a limiting argument in~\eqref{revePaour}, one recovers Barthe’s reverse Brascamp--Lieb result \cite{Barthe98}. This perspective unifies forward and reverse inequalities under a single structural framework. Many developments and applications of these ideas appear throughout the literature, including works of Ball~\cite{Ball} and Barthe~\cite{Barthe}, who clarified extremizability in various settings; BCCT~\cite{BCCT}, which systematically analyzed the geometry and finiteness of the Brascamp--Lieb exponents; and Carlen--Lieb--Loss~\cite{CarLiebLoss04}, Barthe--Cordero-Erausquin~\cite{BarCord04}, and Barthe--Huet~\cite{BartheHuent09}, where functional-analytic and entropy-based methods were further developed. Lehec~\cite{Lehec14} produced streamlined probabilistic proofs of the classical and reverse Brascamp--Lieb estimates, while subsequent refinements and generalizations appear in works such as Bennett--Bez--Carbery~\cite{BannettBeazCarbery}, Barthe--Cordero-Erausquin--Ledoux--Maurey~\cite{BarCordLedMau11}, Courtade--Liu~\cite{CourtaLiu21}, and Anantharam--Jog--Nair~\cite{AnanJoNai22}. More recent studies by Barthe--Wolff~\cite{BarWol22} and Bennett--Tao~\cite{BennTao} explore extremizer classifications, functional extensions, and other structural properties of Brascamp--Lieb-type inequalities.


\subsection{Case of equal $r_{j}$'s and $p_{j}$'s}

If we let  $r_{1}=\ldots=r_{n}=r$, $p_{1}=\ldots=p_{n}=p$ and $\alpha=q/p\geq 1$, then  Theorem~\ref{MRH} part (i) implies the following
\begin{theorem}[correlated $(p,q)$ real hypercontractivity]\label{correal}
   Let $q\geq p \geq 1/\lambda_{\min}$, and pick any $r \in [-1,1]$. The inequality 
   \begin{align}\label{correalFun}
       \left\|\prod_{j=1}^{n} T_{r}f_{j}(\xi_{j})\right\|_{q} \leq \left\|\prod_{j=1}^{n} f_{j}(\xi_{j})\right\|_{p}
   \end{align}
   holds for all measurable $f_{j} :\mathbb{R}^{k_{j}}\mapsto [0, \infty)$ if and only if 
   \begin{align}\label{loceq}
       |r|\leq \sqrt{\frac{p\lambda_{\min}-1}{q\lambda_{\min}-1}}. 
   \end{align}
\end{theorem}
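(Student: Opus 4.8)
The plan is to obtain Theorem~\ref{correal} as the diagonal specialization $r_1=\cdots=r_n=r$, $p_1=\cdots=p_n=p$, $\alpha=q/p$ of Theorem~\ref{MRH}(i), and then reduce the resulting matrix condition to the scalar inequality \eqref{loceq} by elementary linear algebra. First I would check that Theorem~\ref{MRH}(i) applies with these parameters: since $q\ge p$ we have $\alpha=q/p\ge1$, and since $p\ge1/\lambda_{\min}>0$ we have $p\neq0$. With these substitutions the left side of \eqref{multipultir} equals
\[
\Big\|\prod_{j=1}^{n}\big(T_{r}f_{j}(\xi_{j})\big)^{p}\Big\|_{q/p}=\Big\|\prod_{j=1}^{n}T_{r}f_{j}(\xi_{j})\Big\|_{q}^{\,p},
\]
and its right side equals $\big\|\prod_{j}f_{j}(\xi_{j})\big\|_{p}^{\,p}$; hence \eqref{multipultir} is exactly the $p$-th power of \eqref{correalFun}. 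Passing from the strictly positive $f_j$ allowed in Theorem~\ref{MRH} to the nonnegative $f_j$ in Theorem~\ref{correal} is a routine approximation (apply the inequality to $f_j+\varepsilon$ and let $\varepsilon\downarrow0$).

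Next I would translate the condition. The block matrix $\{(1-\alpha r_{u}r_{v})\mathrm{cov}(\xi_{u},\xi_{v})\}_{u,v}$ collapses, when all $r_j$ equal $r$, to the scalar multiple $(1-\tfrac{q}{p}r^{2})\mathrm{cov}(\xi)$, so \eqref{localreal} becomes
\[
\frac{1-r^{2}}{p}\,\mathrm{Id}\;\le\;\Big(1-\tfrac{q}{p}r^{2}\Big)\mathrm{cov}(\xi).
\]
It remains to show this is \emph{equivalent} to \eqref{loceq}. Since $\mathrm{cov}(\xi)\succ0$ by hypothesis, the inequality forces $1-\tfrac{q}{p}r^{2}\ge0$. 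In the main range $1-\tfrac{q}{p}r^{2}>0$, divide through and use $c\,\mathrm{Id}\le M\Leftrightarrow c\le\lambda_{\min}(M)$ to get $\frac{1-r^{2}}{\,p-qr^{2}\,}\le\lambda_{\min}$; clearing the positive denominator and rearranging gives $1-p\lambda_{\min}\le r^{2}(1-q\lambda_{\min})$, and because $q\lambda_{\min}\ge p\lambda_{\min}\ge1$ the factor $1-q\lambda_{\min}$ is nonpositive, so (when it is negative) dividing by it reverses the inequality into $r^{2}\le\frac{p\lambda_{\min}-1}{q\lambda_{\min}-1}$, i.e.\ \eqref{loceq}. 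The boundary and degenerate situations — $1-\tfrac{q}{p}r^{2}=0$, and $q\lambda_{\min}=1$ (which the hypotheses force to happen only when $p=q=1/\lambda_{\min}$) — are checked directly; they are consistent with \eqref{loceq} once one notes $\frac{p\lambda_{\min}-1}{q\lambda_{\min}-1}\le\frac{p}{q}$ for $q\ge p$, so \eqref{loceq} always entails $r^{2}\le p/q$.

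The whole argument is bookkeeping once Theorem~\ref{MRH} is available; the one place demanding genuine care is the sign analysis in the last step, keeping track of where $1-\tfrac{q}{p}r^{2}$ and $1-q\lambda_{\min}$ vanish or change sign so that each division preserves or reverses the inequality correctly. As a sanity check, for $n=1$ one has $\lambda_{\min}=1$ and \eqref{loceq} reduces to the classical Bonami--Nelson threshold $|r|\le\sqrt{(p-1)/(q-1)}$.
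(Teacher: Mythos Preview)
Your proposal is correct and follows essentially the same route as the paper: specialize Theorem~\ref{MRH}(i) to $r_1=\cdots=r_n=r$, $p_1=\cdots=p_n=p$, $\alpha=q/p$, reduce the block condition \eqref{localreal} to $\frac{1-r^{2}}{p}\,\mathrm{Id}\le(1-\tfrac{q}{p}r^{2})\mathrm{cov}(\xi)$, and then translate this into the scalar threshold \eqref{loceq}. Your treatment is in fact more careful than the paper's, which writes the condition directly as $\frac{1-r^{2}}{p-qr^{2}}\le\lambda_{\min}$ without explicitly justifying the sign of $p-qr^{2}$ or the boundary cases you handle.
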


\begin{remark}
    Perhaps the primary example to keep in mind is when $n=2$, and $\xi_{1}$ and $\xi_{2}$ are $\rho$-correlated Gaussians in $\mathbb{R}^{k}$, i.e., $k_{1}=k_{2}=k$ and $\mathrm{cov}(\xi_{1}, \xi_{2})=\rho \mathrm{Id}_{k}$. In this case
    $$
    \lambda_{\min}=1-|\rho|,
    $$
    and the condition (\ref{loceq}) takes the form $|r|\leq \sqrt{\frac{p(1-|\rho|)-1}{q(1-|\rho|)-1}}$. 
    \end{remark}


Let $L:=\Delta -x\cdot \nabla$ be the generator of the Ornstein--Uhlenbeck process in $\mathbb{R}^{n}$. Theorem~\ref{correal} is equivalent (by a standard differentiation argument, see Seciton~\ref{proofsFROMreal}) to the following 
\begin{theorem}[correlated log-Sobolev inequality]\label{corlogsob}
 For any $ p \geq 1/\lambda_{\min}$ we have 
\begin{align}\label{nfunlog}
  &\mathbb{E}\,  \prod_{j=1}^{n}f^{p}_{j}(\xi_{j})\log \prod_{j=1}^{n}f^{p}_{j}(\xi_{j}) - \mathbb{E} \prod_{j=1}^{n}f^{p}_{j}(\xi_{j}) \log \mathbb{E} \prod_{j=1}^{n}f^{p}_{j}(\xi_{j}) \leq\\
  &\frac{p^{2}\lambda_{\min}}{2\sqrt{p\lambda_{\min}-1}} \mathbb{E}\prod_{j=1}^{n}f^{p}_{j}(\xi_{j}) \left(\sum_{j=1}^{n} \frac{Lf_{j}(\xi_{j})}{f_{j}(\xi_{j})}\right) \nonumber
\end{align}
for all measurable $f_{j} :\mathbb{R}^{k_{j}} \mapsto (0, \infty)$, $j=1, \ldots, n$.  Moroever, the constant $\frac{p^{2}\lambda_{\min}}{2\sqrt{p\lambda_{\min}-1}}$ is the best possible.
\end{theorem}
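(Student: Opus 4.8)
The plan is to derive \eqref{nfunlog} by differentiating the sharp hypercontractive estimate of Theorem~\ref{correal} at the diagonal $q=p$, along the curve on which the noise condition \eqref{loceq} holds with equality, and then to read off optimality of the constant either from explicit extremizers or from the optimality of the range of $r$ in Theorem~\ref{correal}. First I would reduce to a convenient dense class: it suffices to establish \eqref{nfunlog} for strictly positive smooth $f_j$ that, together with a few derivatives, are bounded above and below by positive constants --- for such $f_j$ the quantity $Lf_j$ is classical, all the expectations below are finite, and the maps $t\mapsto\mathbb{E}\prod_j\bigl(T_{e^{-t}}f_j(\xi_j)\bigr)^{q(t)}$ are smooth near $t=0$ --- and then pass to the general case (understood, as always when $L$ occurs, for $f_j$ in the domain of $L$) by truncating each $f_j$ to $[\varepsilon,N]$, mollifying, and letting $\varepsilon\downarrow 0$, $N\uparrow\infty$ with a monotone/Fatou passage on the two sides of \eqref{nfunlog}. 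Fix such $f_j$ and abbreviate $F:=\prod_{j=1}^n f_j(\xi_j)$, $M:=\mathbb{E}F^p$.

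Recall that $T_{e^{-t}}=e^{tL}$ is the Ornstein--Uhlenbeck semigroup with generator $L=\Delta-x\cdot\nabla$, so $\partial_t\big|_{t=0}T_{e^{-t}}f_j=Lf_j$. For $p>1/\lambda_{\min}$ set $r(t):=e^{-t}$ and let $q(t)$ be given by $q(t)\lambda_{\min}-1=(p\lambda_{\min}-1)e^{2t}$, so that $q(0)=p$, $q'(0)=2(p\lambda_{\min}-1)/\lambda_{\min}$ and $r(t)^2=(p\lambda_{\min}-1)/(q(t)\lambda_{\min}-1)$ for all $t\ge 0$, i.e.\ \eqref{loceq} holds with equality along this curve. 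Theorem~\ref{correal} (the diagonal case $r_1=\dots=r_n$, $p_1=\dots=p_n$ of Theorem~\ref{MRH}(i)) then gives, for every $t\ge 0$,
\[
\Phi(t):=\Bigl\|\prod_{j=1}^n T_{r(t)}f_j(\xi_j)\Bigr\|_{q(t)}\ \le\ \|F\|_p=\Phi(0),
\]
and since $\Phi$ is smooth at $0$ this forces $\tfrac{d}{dt}\big|_{t=0}\log\Phi(t)\le 0$. Writing $G(t):=\mathbb{E}\prod_j\bigl(T_{r(t)}f_j(\xi_j)\bigr)^{q(t)}=\mathbb{E}\exp\bigl(q(t)\sum_j\log T_{r(t)}f_j(\xi_j)\bigr)$, so that $\log\Phi=\tfrac1q\log G$ and $G(0)=M$, I would differentiate under the expectation and use $\partial_t\big|_0 T_{r(t)}f_j=Lf_j$ and $\sum_j\log f_j=\tfrac1p\log F^p$ to get
\[
G'(0)=\frac{q'(0)}{p}\,\mathbb{E}\bigl[F^p\log F^p\bigr]+p\,\mathbb{E}\Bigl[F^p\sum_{j=1}^n\frac{Lf_j(\xi_j)}{f_j(\xi_j)}\Bigr].
\]
Plugging this into $0\ge -\tfrac{q'(0)}{p^2}\log M+\tfrac1p\tfrac{G'(0)}{M}$, multiplying through by $p^2M/q'(0)>0$ and inserting $q'(0)=2(p\lambda_{\min}-1)/\lambda_{\min}$ yields \eqref{nfunlog}, with the constant equal to $p^2/q'(0)$, read off from the slope of the critical exponent curve. (The endpoint $p=1/\lambda_{\min}$ is vacuous, the constant then being $+\infty$.)

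Finally, for optimality I would exhibit explicit extremizers: take $f_j(x)=e^{c\,v_j\cdot x}$ with $c\in\mathbb{R}$, where $v=(v_1,\dots,v_n)$ is a unit eigenvector of $\mathrm{cov}(\xi)$ associated with $\lambda_{\min}$. Then $F=e^{c\,v\cdot\xi}$ with $v\cdot\xi\sim N(0,\lambda_{\min})$, and using the Gaussian Laplace transform together with $\mathbb{E}[Ye^{aY}]=\lambda_{\min}a\,\mathbb{E}e^{aY}$ one computes both sides of \eqref{nfunlog} in closed form and checks that they coincide for every $c$; hence the constant cannot be improved. (Equivalently one may run Gross's argument in reverse: from \eqref{nfunlog}, differentiating $t\mapsto\|\prod_j T_{e^{-t}}f_j(\xi_j)\|_{Q(t)}$ along the exponent $Q$ calibrated by the log-Sobolev constant, and using that $\prod_j T_{e^{-t}}f_j(\xi_j)$ is again an admissible product, recovers \eqref{correalFun} with $r$ at the boundary of \eqref{loceq}, whose range is optimal in Theorem~\ref{correal}.) I expect the only genuinely delicate point to be the reduction in the first step --- one must work in a class of $f_j$ smooth and integrable enough that differentiation under the expectation is legitimate and $\Phi$ is actually differentiable at $0$ (not merely that $\Phi(t)\le\Phi(0)$ for $t\ge0$), yet large enough that the limiting procedure recovers the full statement. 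Granting that, the differentiation in the second step is a routine computation and the extremizer calculation settles sharpness.
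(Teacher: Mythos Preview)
Your approach is essentially the paper's: both differentiate the hypercontractive estimate of Theorem~\ref{correal} along the critical curve $r^{2}(q\lambda_{\min}-1)=p\lambda_{\min}-1$ at the endpoint $q=p$, $r=1$; the paper parametrizes by the exponent (writing $\varphi(r)=1/\sqrt{r\lambda_{\min}-1}$ and substituting $T_{\varphi}f_j\to f_j$ at the end), whereas you parametrize via $r(t)=e^{-t}$ so that no substitution is needed, and you add the explicit exponential extremizers for sharpness, which the paper's proof omits. One caveat worth recording: your own algebra actually produces the constant $-p^{2}/q'(0)=-\tfrac{p^{2}\lambda_{\min}}{2(p\lambda_{\min}-1)}$ in front of the $L$-term (check the sign when you pass from $0\ge -\tfrac{q'(0)}{p^{2}}\log M+\tfrac{1}{p}\tfrac{G'(0)}{M}$ to the entropy inequality), not the printed $+\tfrac{p^{2}\lambda_{\min}}{2\sqrt{p\lambda_{\min}-1}}$ of \eqref{nfunlog}; the paper's derivation, read carefully, gives the same constant as yours, so the square root in the statement appears to be a typo and the sign convention should be revisited.
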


We remark that uncorrelated version of Theorem~\ref{corlogsob}, i.e., when $\xi_{1}, \ldots, \xi_{n}$ are independent of each other, is equivalent to the classical log-Sobolev inequality due to Gross~\cite{Gross}. 
Indeed, in this case one needs to apply log-Sobolev inequality to each $f_{j}$ separately, multiply each of these inequalities by $\mathbb{E}\prod_{k \neq j} f_{k}^{p_{k}}(\xi_{k})$ and sum up the resulted estimates.  

\begin{theorem}[correlated moment comparison estimates]\label{chaoses2}
  For any $q\geq p \geq 1/\lambda_{\min}$ we have 
  \begin{align*}
      \left\|\prod_{j=1}^{n}f_{j}(\xi_{j})\right\|_{q}\leq \left(\frac{q\lambda_{\min}-1}{p\lambda_{\min}-1}\right)^{\sum d_{j}/2}\left\|\prod_{j=1}^{n}f_{j}(\xi_{j})\right\|_{p}
  \end{align*}
  holds for all $d_{j}$-homogeneous Gaussian chaoses $f_{j} :\mathbb{R}^{k_{j}}\mapsto \mathbb{C}$, $j=1, \ldots, n$.
\end{theorem}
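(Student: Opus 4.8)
The plan is to derive Theorem~\ref{chaoses2} from the correlated real hypercontractivity of Theorem~\ref{correal}, using that for real $r$ the noise operator acts as a scalar on homogeneous chaoses. By Mehler's formula the noise operator $T_r f(x)=\mathbb{E}f(xr+\xi\sqrt{1-r^2})$ coincides with the Mehler-type operator acting on the Fourier--Hermite basis by $T_r H_\beta=r^{|\beta|}H_\beta$; hence if $f_j=\sum_{|\beta|=d_j}c_\beta H_\beta$ is a $d_j$-homogeneous Gaussian chaos on $\mathbb{R}^{k_j}$, then
\[
T_r f_j = r^{d_j} f_j ,
\]
a purely algebraic identity, valid whether the coefficients $c_\beta$ are real or complex.

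Next I would choose $r:=\sqrt{\tfrac{p\lambda_{\min}-1}{q\lambda_{\min}-1}}$. Since $q\ge p\ge 1/\lambda_{\min}$, this $r$ lies in $[0,1]$ and turns condition~(\ref{loceq}) into an equality, so Theorem~\ref{correal} applies with this $r$. (The boundary case $p\lambda_{\min}=1$ is trivial, the asserted constant being $+\infty$ unless all $d_j=0$; so assume $p\lambda_{\min}>1$, i.e.\ $r>0$.) Theorem~\ref{correal} is stated for nonnegative test functions, while the $f_j$ here are complex; to bridge this I use the pointwise contraction $|T_r f_j(x)|\le T_r|f_j|(x)$, which holds by Jensen's inequality and positivity of $T_r$. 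Multiplying over $j$ and taking $q$-th moments,
\[
\Big\|\prod_{j=1}^{n} T_r f_j(\xi_j)\Big\|_q \;\le\; \Big\|\prod_{j=1}^{n} T_r |f_j|(\xi_j)\Big\|_q \;\le\; \Big\|\prod_{j=1}^{n} |f_j(\xi_j)|\Big\|_p \;=\; \Big\|\prod_{j=1}^{n} f_j(\xi_j)\Big\|_p ,
\]
where the middle step is Theorem~\ref{correal} applied to the nonnegative measurable functions $|f_j|$.

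Finally, substituting $T_r f_j=r^{d_j}f_j$ on the left gives $\big\|\prod_j T_r f_j(\xi_j)\big\|_q = r^{\sum_j d_j}\big\|\prod_j f_j(\xi_j)\big\|_q$, and rearranging yields
\[
\Big\|\prod_{j=1}^{n} f_j(\xi_j)\Big\|_q \;\le\; r^{-\sum_j d_j}\Big\|\prod_{j=1}^{n} f_j(\xi_j)\Big\|_p \;=\; \Big(\tfrac{q\lambda_{\min}-1}{p\lambda_{\min}-1}\Big)^{\sum_j d_j/2}\Big\|\prod_{j=1}^{n} f_j(\xi_j)\Big\|_p ,
\]
which is the claim. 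The only genuinely delicate point is the passage from nonnegative to complex-valued test functions, and that is settled by the elementary bound $|T_r f|\le T_r|f|$; everything else reduces to the scalar action of $T_r$ on homogeneous chaoses together with the optimal choice of $r$ in Theorem~\ref{correal}. One could equally invoke part~(i) of Theorem~\ref{MRH} directly with $r_1=\dots=r_n=r$, $p_1=\dots=p_n=p$, $\alpha=q/p$, since Theorem~\ref{correal} is exactly that specialization.
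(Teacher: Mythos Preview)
Your proof is correct and follows essentially the same route as the paper: use the scalar action $T_r f_j=r^{d_j}f_j$ on homogeneous chaoses and apply Theorem~\ref{correal} with $r=\sqrt{(p\lambda_{\min}-1)/(q\lambda_{\min}-1)}$. Your explicit handling of the passage from complex-valued chaoses to the nonnegative test functions required by Theorem~\ref{correal} via $|T_r f_j|\le T_r|f_j|$ is a point of rigor that the paper's proof leaves implicit.
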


\vskip1cm

\subsection{Multiversion $FB$ complex hypercontractivity}
Theorem~\ref{MCH} will be a corollary of a more general result proved in this paper. We say $B: [0, \infty)^{n} \to \mathbb{R}$ satisfies the polynomial growth condition if all second derivatives of $B$ are bounded by a polynomial on $(0, \infty)^{n}$. 

\begin{theorem}[$n$-function $FB$ complex hypercontractivity]\label{complexCaseTheorem}
	Let $F :[0,\infty) \mapsto \mathbb{R}$ and $B: [0,\infty)^{n} \mapsto [0,\infty)$ be smooth in the interior of their domains functions, satisfying the polynomial growth condition and continuous up to the boundary.  We also assume that $F'>0$ on $(0,\infty)$ and $B_m>0$ on $(0,\infty)^n$ for all $1\leq m\leq n$. Let $z=(z_{1}, \ldots, z_{n}) \in \mathbb{C}^{n}$ with $|z_{j}|\leq 1$, $j=1, \ldots, n$. Assume 
    \begin{align}\label{convexity2}
    (t,y) \mapsto \frac{F''(t)}{F'(t)}y^{2} \quad \text{is convex on}\quad  (0,\infty)^{2}. 
    \end{align}
    The inequality 
	\begin{align}\label{CNGJ}
		\mathbb{E} F(B(|T_{z_{1}}f_{1}(\xi_{1})|,\ldots, |T_{z_{n}}f_{n}(\xi_{n})|))\leq F\left( \mathbb{E}B(|f_{1}(\xi_{1})|,\ldots, |f_{n}(\xi_{n})|\right))
	\end{align}
	holds for all polynomials $f_{p}:\mathbb{R}^{k_{p}}\mapsto \mathbb{C}$ if and only if 
	\begin{align}\label{localcomplex}
		&-\sum_{p,q} \left(B_{pq}({\bf{c}})+\frac{F''(B(\bf{c}))}{F'(B(\bf{c}))} 
		B_{p}({\bf{c}})B_{q}({\bf{c}})\right) (\Re z_{p} w_{p})\mathrm{cov}(\xi_{p}, \xi_{q})(\Re z_{q} w_{q})^{T} \\
		&+\sum_{p,q}  B_{pq}({\bf{c}}) (\Re w_{p})\mathrm{cov}(\xi_{p}, \xi_{q})(\Re w_{q})^{T}   +\sum_{p}\frac{B_{p}({\bf{c}})}{c_{p}}  (|\Im w_{p}|^2-|\Im z_pw_{p}|^2)\geq0 \nonumber
	\end{align}
	holds for all $c = (c_{1}, \ldots, c_{n}) \in (0, \infty)^{n}$, and all $w_{p} \in \mathbb{C}^{k_{p}}$, $p=1,\ldots, n$. 
\end{theorem}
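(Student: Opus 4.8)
The statement is an ``if and only if'', and I would prove the two implications by completely different mechanisms. The necessity of \eqref{localcomplex} I would extract by testing \eqref{CNGJ} on near-constant complex exponentials and Taylor-expanding to second order; this also makes transparent why the condition has exactly the form it does. The sufficiency is the substantive part, and for it I would run the Beckner--Janson interpolation flow, now adapted to the pair $(F,B)$: build a one-parameter family $\Phi(s)$ joining the two sides of \eqref{CNGJ}, show $\Phi$ is monotone, and see that the infinitesimal monotonicity collapses, after Gaussian integration by parts, to the pointwise inequality \eqref{localcomplex}.

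\textbf{Necessity.} Fix $c\in(0,\infty)^{n}$ and $w_{p}\in\mathbb{C}^{k_{p}}$, and for small $\varepsilon>0$ put $f_{p}(x)=c_{p}\exp\!\bigl(\varepsilon\langle w_{p},x\rangle-\tfrac{\varepsilon^{2}}{2}\langle w_{p},w_{p}\rangle\bigr)$, where $\langle u,v\rangle=\sum_k u_k v_k$ is the bilinear (not Hermitian) form. These are $L^2$-limits of polynomials, so \eqref{CNGJ} applies to them by continuity, and since $T_{z_{p}}$ rescales the degree-$|\beta|$ Hermite component by $z_{p}^{|\beta|}$ one has $T_{z_{p}}f_{p}(x)=c_{p}\exp\!\bigl(\varepsilon\langle z_{p}w_{p},x\rangle-\tfrac{\varepsilon^{2}}{2}\langle z_{p}w_{p},z_{p}w_{p}\rangle\bigr)$; hence $|T_{z_{p}}f_{p}(\xi_{p})|=c_{p}\exp\!\bigl(\varepsilon\langle\Re(z_{p}w_{p}),\xi_{p}\rangle-\tfrac{\varepsilon^{2}}{2}\Re\langle z_{p}w_{p},z_{p}w_{p}\rangle\bigr)$ is a log-linear Gaussian (and likewise for $|f_{p}(\xi_{p})|$, taking $z_{p}=1$). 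I would then expand both sides of \eqref{CNGJ} to order $\varepsilon^{2}$. The order-$\varepsilon$ terms vanish because $\xi_{p}$ is centered; on the left side, expanding $F$ \emph{inside} the expectation produces the extra term $\tfrac{\varepsilon^{2}}{2}F''(B(c))\bigl(\sum_{p}B_{p}(c)c_{p}\langle\Re(z_{p}w_{p}),\xi_{p}\rangle\bigr)^{2}$, whose expectation is $\tfrac{\varepsilon^{2}}{2}F''(B(c))\sum_{p,q}B_{p}(c)B_{q}(c)c_{p}c_{q}\,\Re(z_{p}w_{p})\mathrm{cov}(\xi_{p},\xi_{q})\Re(z_{q}w_{q})^{T}$, whereas on the right side the expectation sits inside $F$ and no such term appears. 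Collecting the remaining order-$\varepsilon^{2}$ contributions using $\mathbb{E}[\langle a,\xi_{p}\rangle\langle b,\xi_{q}\rangle]=a\,\mathrm{cov}(\xi_{p},\xi_{q})\,b^{T}$, $\mathrm{cov}(\xi_{p},\xi_{p})=\mathrm{Id}_{k_p}$ and $\Re\langle v,v\rangle=|\Re v|^{2}-|\Im v|^{2}$, dividing by $\tfrac{\varepsilon^{2}}{2}F'(B(c))>0$ (legitimate since $F'>0$, $B_m>0$), and finally relabeling $w_{p}\mapsto w_{p}/c_{p}$, one lands exactly on \eqref{localcomplex}; as $c$ ranges over $(0,\infty)^{n}$ through the constant parts of the $f_{p}$, the full condition is obtained.

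\textbf{Sufficiency.} Assume \eqref{localcomplex}. Since $T_{\phi}$ maps $\mathcal P(\mathbb{R}^{k_{p}})$ into polynomials of the same degree, it suffices to treat polynomial data. Following the Beckner--Janson scheme (the continuous avatar of Beckner's two-point induction), I would introduce a parameter $s\in[0,1]$, split each $\xi_{p}\stackrel{d}{=}\sqrt{1-s}\,\xi_{p}'+\sqrt{s}\,\xi_{p}''$ with $(\xi_{p}')_{p}$ and $(\xi_{p}'')_{p}$ two independent copies of $(\xi_{p})_{p}$, pick a path $\phi_{p}(s)$ in $\{|z|\le 1\}$ with $\phi_{p}(0)=z_{p}$, $\phi_{p}(1)=1$ (e.g.\ a reparametrization of $s\mapsto z_{p}^{1-s}$), set $u_{p}(s,\cdot)=T_{\phi_{p}(s)}f_{p}$, and consider
\[
\Phi(s):=\mathbb{E}_{\xi'}\,F\!\left(\mathbb{E}_{\xi''}\,B\bigl(|u_{1}(s,\sqrt{1-s}\,\xi_{1}'+\sqrt{s}\,\xi_{1}'')|,\dots,|u_{n}(s,\sqrt{1-s}\,\xi_{n}'+\sqrt{s}\,\xi_{n}'')|\bigr)\right),
\]
using the identity $T_{a}^{(\xi)}g(\xi)=\bigl[T_{a}^{(\xi')}\!\otimes T_{a}^{(\xi'')}\bigr]g(\sqrt{1-s}\,\xi'+\sqrt{s}\,\xi'')$ for the ``diagonal'' split. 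Then $\xi''$ is absent at $s=0$ and $\xi'$ is absent at $s=1$, so $\Phi(0)=\mathbb{E}\,F(B(|T_{z_{1}}f_{1}(\xi_{1})|,\dots))$ and $\Phi(1)=F(\mathbb{E}\,B(|f_{1}(\xi_{1})|,\dots))$; the role played in classical proofs by a deforming exponent is taken here by the migration of $F$ across the two-stage averaging. The goal is to show $\Phi$ is nondecreasing, which is \eqref{CNGJ}. Differentiating under the integral sign (legitimate by the polynomial growth of $F$ and $B$ and the finiteness of all polynomial Gaussian moments), using $\partial_{s}u_{p}=\tfrac{\dot\phi_{p}}{\phi_{p}}(x\cdot\nabla-\Delta)u_{p}$ together with the $s$-derivative of the split and Gaussian integration by parts $\mathbb{E}[(\partial_{p,k}g)\,\xi_{q,l}]=\mathrm{cov}(\xi_{p,k},\xi_{q,l})\,\mathbb{E}[\partial_{p,k}\partial_{q,l}g]$, and expanding $\Delta|u_{p}|=\tfrac{\Re(\overline{u_{p}}\,\Delta u_{p})}{|u_{p}|}+\tfrac{|\nabla u_{p}|^{2}-|\nabla|u_{p}||^{2}}{|u_{p}|}$, one should find $\Phi'(s)=\mathbb{E}[\Lambda_{s}]$, where $\Lambda_{s}$ equals, pointwise, the left-hand side of \eqref{localcomplex} evaluated at $c_{p}=|u_{p}(s,\xi_{p})|$ and at a complex vector $w_{p}$ reading off $\nabla u_{p}/u_{p}$ --- its real part from $\nabla|u_{p}|$ and its imaginary (``angular'') part feeding precisely the $\tfrac{B_{p}}{c_{p}}\bigl(|\Im w_{p}|^{2}-|\Im z_{p}w_{p}|^{2}\bigr)$ terms. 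Hence $\Lambda_{s}\ge0$ by \eqref{localcomplex} and $\Phi'\ge0$, giving \eqref{CNGJ}; by the computation above the hypothesis cannot be weakened. The convexity assumption \eqref{convexity2} enters precisely at the step where the nonlinearity of $F$ must be commuted past the inner expectation $\mathbb{E}_{\xi''}$: it is exactly the condition under which that commutation yields a term of the correct sign (the same mechanism by which $\Phi$-entropy inequalities close in the Bakry--\'Emery calculus), so that the pointwise bound \eqref{localcomplex} assembles into the sign of $\Phi'$; without it the reduction to \eqref{localcomplex} fails.

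\textbf{Main obstacle.} The hard part is not any single estimate but the \emph{design} of the flow: pinning down the precise coupling between the interpolation parameter $s$ and the Mehler parameters $\phi_{p}(s)$ so that $\Phi$ genuinely joins the two sides of \eqref{CNGJ} \emph{and} $\Phi'$ telescopes cleanly to $\mathbb{E}[\Lambda_{s}]$ with $\Lambda_{s}$ equal to the left side of \eqref{localcomplex}; this is where the asymmetric appearance of $\Re(z_{p}w_{p})$ versus $\Re w_{p}$ (and of $|\Im w_{p}|^{2}$ versus $|\Im z_{p}w_{p}|^{2}$) in \eqref{localcomplex} is forced, by the real (diffusive) and imaginary (rotational) parts of $\dot\phi_{p}/\phi_{p}$ respectively. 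A secondary technical point is the handling of $\Delta|u_{p}|$ when the complex polynomial $u_{p}$ has zeros: this I would deal with by replacing $|u_{p}|$ with $(|u_{p}|^{2}+\delta)^{1/2}$, carrying the monotonicity through for $\delta>0$, and letting $\delta\downarrow0$ using continuity of $B$ up to the boundary and the fact that the zero set of a nontrivial polynomial is Lebesgue-null; the differentiation under the integral and the Gaussian integration by parts are then routine given the polynomial growth assumptions.
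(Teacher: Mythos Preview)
Your necessity argument via complex exponentials is equivalent in spirit to the paper's (which uses linear perturbations $f_j=a_j+\varepsilon\,\eta_j\cdot x_j$); both expand to second order and extract the same quadratic form, so that direction is fine.

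For sufficiency, your high-level plan matches the paper's: build an interpolating functional with the right endpoints, show monotonicity, and use the convexity hypothesis \eqref{convexity2} via Jensen to commute the inner expectation past $F''/F'$, thereby reducing $\Phi'(s)\ge 0$ to the pointwise condition \eqref{localcomplex}. But the \emph{flow} you propose is not the one the paper uses, and this is where your sketch has a real gap. The paper does not move the Mehler parameter along a path $\phi_p(s)$ from $z_p$ to $1$. Instead it keeps $z_p$ fixed and exploits the holomorphic representation $T_{z_j}f_j(x)=\int \ell_j\bigl(z_j(x+iy)\bigr)\,d\gamma(y)$, where $\ell_j$ is obtained from $f_j=\sum c_\alpha H_\alpha$ by replacing each $H_\alpha$ with the monomial $x^\alpha$. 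The interpolation is then a \emph{four}-fold heat flow
\[
C(s)=Q_{1-s}^{x}\,F\Bigl(Q_{s}^{u}\,M\bigl(|g_1|^{2},\dots,|g_n|^{2}\bigr)\Bigr),\qquad
g_p=Q_{1-s}^{y}\,Q_{s}^{v}\,\ell_p\bigl((A_pu+iv)+z_p(A_px+iy)\bigr),
\]
with $B(c_1,\dots,c_n)=M(c_1^2,\dots,c_n^2)$. Working with $|g_p|^2$ rather than $|g_p|$ is exactly what sidesteps your ``zeros of $u_p$'' issue without any $\sqrt{|u_p|^2+\delta}$ mollification. When one differentiates $C$ and applies the heat equation in each of $x,u,y,v$, all second-order terms in $g_{p,bb}$ cancel identically (this is where $A_pA_p^T=\mathrm{Id}$ is used), leaving a pure quadratic form in the first derivatives that is literally the left side of \eqref{localcomplex} evaluated at $c_p=|g_p|^2$, $w_{p,b}=\bar g_p\,g_{p,b}$. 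In particular, the terms $|\Im w_p|^2-|\Im z_pw_p|^2$ arise from the $v$ and $y$ heat flows acting on the holomorphic argument, not from any ``rotational part of $\dot\phi_p/\phi_p$.''

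Your flow, by contrast, carries the extra term $\tfrac{\dot\phi_p}{\phi_p}(x\cdot\nabla-\Delta)u_p$ whose complex prefactor depends on the unspecified path; you assert that after integration by parts everything telescopes to \eqref{localcomplex}, but that is the whole theorem, and the example $\phi_p(s)=z_p^{1-s}$ gives $\dot\phi_p/\phi_p=-\log z_p$, which bears no visible relation to the structure of \eqref{localcomplex}. You correctly flag the design of the flow as the main obstacle, but the paper's specific design (holomorphic lift plus the auxiliary $v,y$ Gaussians) is precisely the idea your sketch is missing.
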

We list some of the applications of the theorem. 
\begin{itemize}
    \item[1.] For $F(t)=t^{\alpha}$, $B(c)=c_{1}^{p_{1}}\cdots c_{n}^{p_{n}}$ the theorem coincides with $n$-function complex hypercontractivity, i.e.,  Theorem~\ref{MCH}. 
    \item[2.]  When $n=1$ the theorem recovers $(P,Q)$-complex hypercontractivity,  the main result of \cite{IvanKal} which is a complex extension of  Hariya's theorem \cite{Hariya}. 
    \item[3.] When $F(t)=t$, and $z_{1}=\ldots=z_{n}=0$ we obtain one side of Gaussian--Jensen inequality (see \cite{Neeman14, NeemanPaouris16, IvanVol15, LedouxSlepian}) i.e., 
    $$
    \mathbb{E} B(f_{1}(\xi_{1}), \ldots, f_{n}(\xi_{n}))\geq B(\mathbb{E}f_{1}(\xi_{1}), \ldots, \mathbb{E} f_{n}(\xi_{n}))
    $$
    holds if and only if $\left\{\mathrm{cov}(\xi_{p}, \xi_{q})B_{pq}({\bf c})\right\}_{p,q=1}^{n}\geq 0$, provided that an additional conditions $B_{p}\geq 0$ for $p=1, \ldots, n$ are satisfied. In case of $(z_{1}, \ldots, z_{n})\in \mathbb{R}^{n}$ we will state a bit more general theorem in the next section which will not require mild polynomial growth assumptions on $B$ and $F$, together with $B_{p}>0,$ and $f_{j}$ being polynomials, $j=1, \ldots, n$. 
\end{itemize}

The assumption of $F$ and $B$ being smooth,  in general, is not needed in the statement of Theorem~\ref{complexCaseTheorem}.  We will see that for the implication  (\ref{CNGJ}) implies (\ref{localcomplex}) it suffices $F$ and $B$ to be two times continuously differentiable in the interior of their domains. For the implication (\ref{CNGJ}) implies (\ref{localcomplex})  it sufficies to have $F$ and $B$ to be three times continusly differentiable. In all our applications that we are aware of $F$ and $B$ happen to be smooth functions in the interior of their domains. 

\subsection{Multiversion $FB$ forward and reverse real hypercontractivity.} In real case, i.e.,  $(z_{1}, \ldots, z_{n}) \in \mathbb{R}^{n},$ we are able to obtain forward and reverse inequalities in Theorem~\ref{complexCaseTheorem}. Moreover, we will drop the mild requirements on $F$ and  $B$ to satisfy polynomial growth condition, the test funtions $f_{j}$ to be polynomials, and the extra assumptions $B_{m}>0$ for all $m=1, \ldots, n$. 

Let $C=\prod_{j=1}^{n}C_{j}$ be rectangular box in $\mathbb{R}^{n}$, i.e., product of $n$ closed intervals $C_{j}\subset \mathbb{R}$, and let $J\subset \mathbb{R}$ be any closed interval. 

\begin{theorem}\label{FBrealTheorem}
 Let $(r_1,\ldots,r_n)\in[-1,1]^n$, and let $F : J \to \mathbb{R}$ and $ B : C \to J $ be smooth functions in their domains with $F'(t)\neq0$. The inequality 
\vspace{0.2cm}
\begin{align}\label{NGJ}
\mathbb{E}F ( B(T_{r_1}f_1(\xi_1), \ldots, T_{r_n}f_n( \xi_n)))\leq F\left(	\mathbb{E}B(f_1(\xi_1), \ldots, f_n(\xi_n))\right)
\end{align} 
\vspace{0.2cm}
holds for all measurable functions $f_p:\R^{k_p}\to C_{p}$, $p=1,\ldots,n$,  if and only if 
\vspace{0.2cm}
\begin{align}\label{Nlocal}
\left\{F'(B({\bf{c}}))\left((1-r_pr_q)B_{pq}({\bf{c}})-r_pr_q F''(B({\bf{c}}))B_{p}({\bf{c}})B_{q}({\bf{c}}) \right)\mathrm{cov}(\xi_{p}, \xi_{q}) \right\}_{p,q=1}^n\geq 0.
\end{align}
\vspace{0.2cm}
for all ${\bf{c}}\in C$ provided that 
\begin{align}\label{functional}
(t,y) \mapsto \frac{F''(t)}{|F'(t)|}y^{2}
\end{align}
is convex on $J\times \R$. 
\end{theorem}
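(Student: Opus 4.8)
The plan is to prove Theorem~\ref{FBrealTheorem} by the real‑parameter version of the Beckner--Janson heat flow that underlies Theorem~\ref{complexCaseTheorem}. For smooth $F$, $B$ and polynomial test functions the forward inequality \eqref{NGJ} is in fact already contained in Theorem~\ref{complexCaseTheorem}: putting $z_j=r_j\in\R$ in \eqref{localcomplex}, the term $\sum_p\tfrac{B_p(\mathbf{c})}{c_p}\bigl(|\Im w_p|^2-|\Im z_pw_p|^2\bigr)=\sum_p\tfrac{B_p(\mathbf{c})}{c_p}(1-r_p^2)|\Im w_p|^2$ is automatically $\geq0$, the remaining (real) part reduces, after multiplying by the positive factor $F'(B(\mathbf{c}))$, to \eqref{Nlocal}, and \eqref{convexity2} becomes \eqref{functional}. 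I would nonetheless carry out the flow directly, because it also disposes of the polynomial‑growth hypotheses, of the assumption $B_m>0$, and of the restriction to polynomial $f_j$, and -- replacing $F$ by $-F$ -- yields the companion reverse inequality.

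\emph{Necessity.} Fix an interior point $\mathbf{c}=(c_1,\ldots,c_n)$ of $C$ and vectors $w_p\in\R^{k_p}$, and test \eqref{NGJ} on $f_j=c_j+\varepsilon\langle w_j,\cdot\rangle$ (a bounded perturbation with the same first‑order term if $C_j$ is bounded). Since $T_{r_j}$ fixes constants and scales degree‑one Hermite polynomials by $r_j$, one has $T_{r_j}f_j=c_j+\varepsilon r_j\langle w_j,\cdot\rangle$. Expanding both sides of \eqref{NGJ} to order $\varepsilon^2$, using $\mathbb{E}[\langle w_p,\xi_p\rangle\langle w_q,\xi_q\rangle]=w_p\,\mathrm{cov}(\xi_p,\xi_q)\,w_q^{T}$ and that the linear‑in‑$\xi$ terms have mean zero, the $\varepsilon^2$‑coefficient of the right side minus that of the left is $\tfrac12$ times the quadratic form in $(w_1,\ldots,w_n)$ whose matrix is the one in \eqref{Nlocal} evaluated at $\mathbf{c}$. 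As $\mathbf{c}$ and the $w_p$ are arbitrary, \eqref{NGJ} forces that matrix to be positive semidefinite, and boundary points of $C$ are covered by continuity; this is exactly \eqref{Nlocal}.

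\emph{Sufficiency.} Realise the noise probabilistically: with $\zeta_j\sim N(0,\mathrm{Id}_{k_j})$ fresh and independent, $T_{r_j}f_j(\xi_j)=\mathbb{E}[f_j(r_j\xi_j+\sqrt{1-r_j^2}\,\zeta_j)\mid\xi_j]$. I would run a flow $\Phi:[0,1]\to\R$ that moves each noise parameter from $r_j$ up to $1$ along an Ornstein--Uhlenbeck time while simultaneously transferring $F$ from inside to outside the Gaussian expectation over $\xi$ by a decreasing conditioning: with a filtration $(\mathcal{G}_t)$ shrinking from $\sigma(\xi)$ to trivial and smooth $r_j(t)\!:\![0,1]\to[-1,1]$, $r_j(0)=r_j$, $r_j(1)=1$,
\[
\Phi(t)=\mathbb{E}\, F\!\Bigl(\mathbb{E}\bigl[\,B\bigl(T_{r_1(t)}f_1(\xi_1),\ldots,T_{r_n(t)}f_n(\xi_n)\bigr)\;\big|\;\mathcal{G}_t\,\bigr]\Bigr),
\]
so that $\Phi(0)$ is the left side of \eqref{NGJ} and $\Phi(1)=F\bigl(\mathbb{E}\,B(f_1(\xi_1),\ldots,f_n(\xi_n))\bigr)$ is the right side. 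Differentiating, the $r_j(t)$‑evolution contributes -- through $\partial_\rho T_\rho=-\rho^{-1}LT_\rho$ and the Gaussian integration‑by‑parts identity $\mathbb{E}[\phi\,(Lg_j)(\xi_j)]=-\sum_l\mathbb{E}[(\nabla g_j)(\xi_j)^{T}\,\mathrm{cov}(\xi_j,\xi_l)\,\nabla_{\xi_l}\phi]$ -- a quadratic form in the gradients $\nabla(T_{r_j(t)}f_j)(\xi_j)$ with coefficients built from $F'$, $F''$, $B_p$, $B_{pq}$ and $\mathrm{cov}(\xi_p,\xi_q)$; the shrinking of $\mathcal{G}_t$ contributes a reverse‑martingale term carrying an $F''$ evaluated at the conditional expectation $\mathbb{E}[B(\cdots)\mid\mathcal{G}_t]$. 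The convexity hypothesis \eqref{functional} is used exactly here: conditional Jensen for $(t,y)\mapsto F''(t)/|F'(t)|\,y^2$ bounds that reverse‑martingale term below by its pointwise analogue, and $\Phi'(t)$ is then $\geq$ the expectation of the quadratic form whose matrix is the one in \eqref{Nlocal} evaluated at $\mathbf{c}=(T_{r_1(t)}f_1(\xi_1),\ldots)$. Hence \eqref{Nlocal} gives $\Phi'\geq0$, so $\Phi(0)\leq\Phi(1)$, which is \eqref{NGJ}.

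\emph{Reductions and the main difficulty.} For merely measurable $f_j:\R^{k_j}\to C_j$ I would approximate by smooth functions valued in the convex interval $C_j$; since $T_{r_j}$ preserves $C_j$‑valued functions and $F$, $B$ are smooth on $J$, $C$, one passes to the limit by dominated convergence, with a truncation when $C_j$ or $J$ is unbounded. The polynomial‑growth and $B_m>0$ hypotheses are unnecessary in the real case, since no imaginary parts occur (so the $B_p/c_p$ terms of \eqref{localcomplex} never enter), or else one perturbs $B$ to have all $B_{\varepsilon,m}>0$ and lets $\varepsilon\to0$; and $F'<0$ is reduced to $-F$, for which \eqref{NGJ} becomes the reverse inequality and the flow computation, \eqref{functional} and \eqref{Nlocal} all transform consistently. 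The main obstacle is the $t$‑derivative of $\Phi$: one must arrange the denoising ($r_j(t)\uparrow1$) and the ``de‑revealing'' ($\mathcal{G}_t\downarrow$) so that the endpoints of $\Phi$ are precisely the two sides of \eqref{NGJ}, and then show that the $F''$‑term produced by the merely monotone (neither convex nor concave) $F$ acting on a conditional expectation is dominated \emph{pointwise} by the denoising term -- which is exactly what convexity of $(t,y)\mapsto F''(t)/|F'(t)|\,y^2$ provides, collapsing the monotonicity to the single matrix inequality \eqref{Nlocal}. As a check, $F(t)=t^\alpha$ and $B(\mathbf{c})=c_1^{p_1}\cdots c_n^{p_n}$ make \eqref{Nlocal} collapse to \eqref{localreal}, recovering Theorem~\ref{MRH}(i) and, for $r_j=0$, Theorem~\ref{PaourisDafnisChen}.
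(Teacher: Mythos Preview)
Your necessity argument and overall strategy are right and match the paper: second–order Taylor expansion on linear test functions for \eqref{NGJ}$\Rightarrow$\eqref{Nlocal}, and a monotone heat–flow interpolation with conditional Jensen on $(t,y)\mapsto F''(t)/|F'(t)|\,y^2$ for the converse.

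The gap is in the sufficiency flow. The paper does \emph{not} let the noise parameters vary: it keeps $r_p$ fixed and instead introduces two independent standard Gaussians $u,x\in\R^k$ and runs
\[
C(s)=Q_{1-s}^x\,F\!\Bigl(Q_s^u\,B\bigl(g_1,\ldots,g_n\bigr)\Bigr),\qquad
g_p(u,x,s)=\int f_p\bigl(A_pu+r_pA_px+\sqrt{1-r_p^2}\,y\bigr)\,d\gamma^{(1-s)}(y).
\]
The fixed weights $1$ (on $u$), $r_p$ (on $x$), and $\sqrt{1-r_p^2}$ (on $y$) are the whole point: differentiating in $s$, the $u$–Laplacian, the $x$–Laplacian and the $s$–derivative of the noise produce exactly the coefficients $1$, $r_pr_q$ and $1-r_p^2$, and all second derivatives of $f_p$ cancel identically (the paper's $\mathcal Q_3=0$). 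After the Jensen step one lands precisely on the quadratic form with matrix \eqref{Nlocal} evaluated at the \emph{original} $r_p$, at every time $s$.

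Your scheme has $r_j(t)$ moving from $r_j$ to $1$. Differentiating then gives a quadratic form whose coefficients carry $r_j(t)$, not the fixed $r_j$; the local condition \eqref{Nlocal} at $r_j$ does not in general imply the analogous condition at $r_j(t)$ (e.g.\ near $r_j(t)=1$ the diagonal entries degenerate), so monotonicity is not secured. Moreover, the cancellation of the $Lf_j$ terms that your integration–by–parts produces must be matched exactly by the filtration contribution, and with an unspecified $\mathcal G_t$ there is no reason this happens. In short: the correct interpolation freezes $r_p$ and moves the Gaussian mass between two copies $u$ and $x$; your ``denoise $r_j(t)\uparrow1$ while de–revealing $\mathcal G_t\downarrow$'' is not the same flow and, as written, does not close.

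A small point on necessity: $f_j=c_j+\varepsilon\langle w_j,\cdot\rangle$ need not be $C_j$–valued when $C_j$ is bounded; the paper truncates by $1_{\|\xi_j\|<\varepsilon^{-1/2}}$ and controls the tails to keep the Taylor argument valid.
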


We list several applications of Theorem~\ref{FBrealTheorem}. 
\subsubsection{$F$ and $B$ are power functions} The choice $F(t)=t^{\alpha}$ and $B({\bf c})=c_{1}^{p_{1}}\cdots c_{n}^{p_{n}}$ in the theorem  gives $n$-function forward and reverse real hypercontractivity, i.e., Theorem~\ref{MRH}.

\subsubsection{Noisy Gaussian--Jensen inequality}

If we choose $F(t)=t$, then Theorem~\ref{FBrealTheorem} gives a noisy extension of Gaussian--Jensen inequality. We decided to state it as a separate theorem due to its many applications in analysis of Gauss space.
\begin{theorem}[Noisy Gaussian--Jensen]\label{NGJT}
Let $(r_{1}, \ldots, r_{n})\in [-1,1]^{n}$, and let $B : C \mapsto \mathbb{R}$ be smooth in the interior of their domain and continuous up to the boundary. The inequality 
\begin{align}\label{NGJid}
    \mathbb{E} B(T_{r_{1}}f_{1}(\xi_{1}), \ldots, T_{r_{n}}f_{n}(\xi_{n})) \leq \mathbb{E} B(f_{1}(\xi_{1}), \ldots, f_{n}(\xi_{n}))
\end{align}
holds for all measurable $f_{j} : \mathbb{R}^{k_{j}} \mapsto  C_{j}$, $j=1, \ldots, n$,  if and only if 
\begin{align}\label{localNGJ}
    \left\{(1-r_{p}r_{q})B_{pq}({\bf c}) \mathrm{cov}(\xi_{p}, \xi_{q}) \right\}_{p,q=1}^{n}\geq 0
\end{align}
for all ${\bf c} \in \mathrm{int}(C)$.   
\end{theorem}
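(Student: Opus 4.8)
The plan is to derive Theorem~\ref{NGJT} as the special case $F(t)=t$ of Theorem~\ref{FBrealTheorem}, so the bulk of the work is to check that the hypotheses of the latter hold and that its conclusion specializes to the claimed form. First I would set $F(t)=t$ on an arbitrary closed interval $J$ containing the range of $B$; then $F'\equiv 1\neq 0$ and $F''\equiv 0$, so the functional convexity condition \eqref{functional}, namely that $(t,y)\mapsto \frac{F''(t)}{|F'(t)|}y^2 \equiv 0$ is convex on $J\times\R$, holds trivially. Substituting $F'\equiv 1$ and $F''\equiv 0$ into the local condition \eqref{Nlocal} collapses the bracketed matrix to $\left\{(1-r_pr_q)B_{pq}({\bf c})\,\mathrm{cov}(\xi_p,\xi_q)\right\}_{p,q=1}^n$, which is exactly \eqref{localNGJ}; and the global inequality \eqref{NGJ} becomes $\mathbb{E}\,B(T_{r_1}f_1(\xi_1),\ldots,T_{r_n}f_n(\xi_n))\le \mathbb{E}\,B(f_1(\xi_1),\ldots,f_n(\xi_n))$, which is \eqref{NGJid}. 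Thus both directions (inequality $\Leftrightarrow$ local condition) follow immediately from the corresponding directions in Theorem~\ref{FBrealTheorem}.

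The only genuine gap is a regularity mismatch: Theorem~\ref{FBrealTheorem} is stated for $B$ smooth on the box $C$, whereas Theorem~\ref{NGJT} only asks $B$ to be smooth in $\mathrm{int}(C)$ and continuous up to the boundary, and correspondingly requires \eqref{localNGJ} only on $\mathrm{int}(C)$. To bridge this I would argue by an interior-exhaustion plus approximation: fix a sequence of compact sub-boxes $C^{(m)}=\prod_j C_j^{(m)}$ with $C^{(m)}\nearrow \mathrm{int}(C)$, restrict the test functions $f_j$ (after a truncation) to take values in $C_j^{(m)}$, apply Theorem~\ref{FBrealTheorem} on each $C^{(m)}$ where $B$ is genuinely smooth and \eqref{localNGJ} holds by hypothesis, and then pass to the limit $m\to\infty$ using continuity of $B$ up to the boundary together with dominated convergence (the noise operators $T_{r_j}$ are contractions on bounded functions, keeping everything uniformly bounded on compacta of $C$). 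For the converse direction, if \eqref{NGJid} holds for all $f_j$ valued in $C_j$, then in particular it holds for $f_j$ valued in any sub-box, so Theorem~\ref{FBrealTheorem} forces \eqref{localNGJ} on each $\mathrm{int}(C^{(m)})$, hence on all of $\mathrm{int}(C)$.

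I expect the main obstacle to be precisely this boundary/approximation step rather than anything algebraic: one must ensure that when the $C_j$ are unbounded or half-open, the test functions used to probe \eqref{localNGJ} (which in the proof of Theorem~\ref{FBrealTheorem} are small perturbations $f_j = c_j + \epsilon\,(\text{linear in }\xi_j)$ around an interior point ${\bf c}$) stay in $\mathrm{int}(C)$ and that the expectations $\mathbb{E}\,B(\cdot)$ are finite and behave well under the limit — this is where continuity up to the boundary and a uniform local bound on $B$ and its derivatives on the relevant compact sets are used. Everything else is a direct substitution, so I would keep the write-up short: state the specialization $F(t)=t$, verify \eqref{functional} trivially, record that \eqref{Nlocal} becomes \eqref{localNGJ}, and then devote a paragraph to the interior-exhaustion argument that relaxes smoothness on all of $C$ to smoothness on $\mathrm{int}(C)$ with continuity up to the boundary.
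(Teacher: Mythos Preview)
Your proposal is correct and follows exactly the paper's own approach: the paper states just before Theorem~\ref{NGJT} that ``If we choose $F(t)=t$, then Theorem~\ref{FBrealTheorem} gives a noisy extension of Gaussian--Jensen inequality,'' and offers no further argument. Your extra paragraph handling the regularity mismatch (smooth on $C$ versus smooth on $\mathrm{int}(C)$ and continuous up to the boundary) via interior exhaustion is more careful than the paper itself, which simply glosses over this point.
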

Clearly the reverse inequality to (\ref{NGJid}) holds if and only if the reverse inequality to (\ref{localNGJ}) holds. The reverse direction of the theorem follows just by applying the theorem to $-B$ instead of $B$. 

The application of the theorem to the case $r_{1}=\ldots=r_{n}=0$ gives Gaussian-Jensen inequality.
\begin{atheorem}[Gaussian--Jensen inequality]\label{GJI}
Let $B : C \mapsto \mathbb{R}$ be smooth in the interior of their domain and continuous up to the boundary. The inequality 
\begin{align}\label{GJid}
     B(\mathbb{E}f_{1}(\xi_{1}), \ldots, \mathbb{E}f_{n}(\xi_{n})) \leq \mathbb{E} B(f_{1}(\xi_{1}), \ldots, f_{n}(\xi_{n}))
\end{align}
holds for all measurable $f_{j} : \mathbb{R}^{k_{j}} \mapsto  C_{j}$, $j=1, \ldots, n$,  if and only if 
\begin{align}
    \left\{B_{pq}({\bf c}) \mathrm{cov}(\xi_{p}, \xi_{q}) \right\}_{p,q=1}^{n}\geq 0
\end{align}
for all ${\bf c} \in \mathrm{int}(C)$.   
\end{atheorem}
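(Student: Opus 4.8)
The plan is to deduce the stated Gaussian--Jensen inequality (Theorem~\ref{GJI}) directly from its noisy version, Theorem~\ref{NGJT}, by specializing to $r_1=\cdots=r_n=0$. Indeed $T_0f_j(x)=\mathbb{E}f_j(\xi)$ is the constant $\mathbb{E}f_j(\xi_j)$, so the vector $(T_0f_1(\xi_1),\dots,T_0f_n(\xi_n))$ is deterministic and the left-hand side of \eqref{NGJid} collapses to $B(\mathbb{E}f_1(\xi_1),\dots,\mathbb{E}f_n(\xi_n))$, i.e. the left-hand side of \eqref{GJid}, while the right-hand side is literally unchanged; and \eqref{localNGJ} with every $r_pr_q=0$ reads exactly $\{B_{pq}(\mathbf c)\,\mathrm{cov}(\xi_p,\xi_q)\}_{p,q=1}^n\ge0$ for all $\mathbf c\in\mathrm{int}(C)$. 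Thus Theorem~\ref{GJI} is precisely the $r=0$ instance of the equivalence in Theorem~\ref{NGJT}, and the only thing to check is that the admissible class of test functions and the hypotheses on $B$ agree in the two statements, which they do. There is essentially no obstacle along this route.

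For completeness I would also record the classical self-contained proof (how one proves this from scratch), since the content of Theorem~\ref{NGJT} at $r=0$ is exactly this. As $\mathrm{cov}(\xi)=:\Sigma$ has full rank, write $\xi=\Sigma^{1/2}\gamma$ with $\gamma\sim N(0,\mathrm{Id}_K)$, $K=\sum_jk_j$, and let $M_j$ be the $j$-th block-row of $\Sigma^{1/2}$; then $\xi_j=M_j\gamma$, $M_pM_q^{T}=\mathrm{cov}(\xi_p,\xi_q)$, and $M_jM_j^{T}=\mathrm{Id}_{k_j}$ because each $\xi_j$ is standard. Let $P_t$ be the Ornstein--Uhlenbeck semigroup (generator $L=\Delta-\gamma\cdot\nabla$, with $P_0=\mathrm{id}$ and $P_\infty=\mathbb{E}$); using $M_jM_j^{T}=\mathrm{Id}_{k_j}$ one checks $P_t(f_j\circ M_j)=(P_tf_j)\circ M_j$ (the left $P_t$ on $\mathbb{R}^K$, the right one on $\mathbb{R}^{k_j}$). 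Setting
\[
\Phi(t)=\mathbb{E}\,B\bigl(P_tf_1(\xi_1),\dots,P_tf_n(\xi_n)\bigr),
\]
we see that $\Phi(0)$ and $\Phi(\infty)$ are the right- and left-hand sides of \eqref{GJid}, so it suffices to prove $\Phi'\le0$. Differentiating, using $\partial_tP_t=LP_t$, the Gaussian Dirichlet-form identity $\mathbb{E}_\gamma[\psi\,L\phi]=-\mathbb{E}_\gamma[\nabla\psi\cdot\nabla\phi]$, and the chain-rule substitutions $\nabla_\gamma\bigl((P_tf_p)\circ M_p\bigr)=M_p^{T}v_p$ with $v_p:=\nabla(P_tf_p)(\xi_p)$ and $\langle M_p^{T}v_p,M_q^{T}v_q\rangle=v_p^{T}\mathrm{cov}(\xi_p,\xi_q)v_q$, one obtains
\[
\Phi'(t)=-\,\mathbb{E}\sum_{p,q}B_{pq}(\mathbf u)\,v_p^{T}\mathrm{cov}(\xi_p,\xi_q)\,v_q,\qquad \mathbf u=\bigl(P_tf_j(\xi_j)\bigr)_{j=1}^n.
\]
Positive semidefiniteness of the block matrix $\{B_{pq}(\mathbf c)\mathrm{cov}(\xi_p,\xi_q)\}$ for every $\mathbf c\in\mathrm{int}(C)$ is exactly the nonnegativity of this integrand (for $t>0$, $\mathbf u\in\mathrm{int}(C)$), which gives $\Phi'\le0$ and hence \eqref{GJid}.

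For the converse, if that block matrix fails to be positive semidefinite at some $\mathbf c^{\ast}\in\mathrm{int}(C)$, choose $v^{\ast}=(v_1^{\ast},\dots,v_n^{\ast})$ with $\sum_{p,q}B_{pq}(\mathbf c^{\ast})(v_p^{\ast})^{T}\mathrm{cov}(\xi_p,\xi_q)v_q^{\ast}<0$ and test with $f_j=c_j^{\ast}+\varepsilon\psi_j$, where $\psi_j$ is a bounded smooth truncation of $x\mapsto\langle v_j^{\ast},x\rangle$; a second-order Taylor expansion gives $\mathbb{E}B(f)-B(\mathbb{E}f)=\tfrac{\varepsilon^{2}}{2}\sum_{p,q}B_{pq}(\mathbf c^{\ast})\,\mathrm{Cov}\bigl(\psi_p(\xi_p),\psi_q(\xi_q)\bigr)+O(\varepsilon^{3})$, which is negative once the truncation is taken loose enough (so the covariances approximate $(v_p^{\ast})^{T}\mathrm{cov}(\xi_p,\xi_q)v_q^{\ast}$) and $\varepsilon$ is small (so each $f_j$ stays in $C_j$), contradicting \eqref{GJid}. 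The main---indeed essentially the only---technical points in this self-contained route are the boundary/regularity bookkeeping (for $t>0$ the semigroup smooths $f_j$ and pushes its range into $\mathrm{int}(C_j)$, and one lets $t\downarrow0$ using continuity of $B$ up to the boundary) and, in the converse, the truncation respecting the constraint $f_j:\mathbb{R}^{k_j}\to C_j$; the analytic heart is the elementary Dirichlet-form computation above. The cleanest proof is nonetheless the one-line reduction to Theorem~\ref{NGJT}.
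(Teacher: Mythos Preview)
Your proposal is correct and takes exactly the paper's approach: the paper derives Theorem~\ref{GJI} in one line as the specialization of Theorem~\ref{NGJT} to $r_1=\cdots=r_n=0$, which is precisely your first paragraph. The additional self-contained Ornstein--Uhlenbeck interpolation argument you include is not given in the paper (it simply cites the reduction), but it is standard and correct.
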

Gaussian-Jensen inequality for many functions first appeared in the work of Neeman~\cite{Neeman14}. It also appeared independently in \cite{IvanVol2015} for homogeneous functions $B$. See \cite{LedouxSlepian} for a beautiful exposition of this inequality. 

\subsubsection{Noisy Borell's theorem}\label{noiboy}

Let $s \in [-1,1]$ and let $X$ and $X_{s}$ be $s$ correlated standard Gaussians in $\mathbb{R}^{n}$ such that the joint distribution $(X, X_{s})$ is Gaussian in $\mathbb{R}^{2n}$. Set
\begin{align*}
    M(u,v; s) := \mathbb{P}(X\in H(u), \; X_{s} \in H(v))
\end{align*}
for all $u,v \in (0,1)$, where $H(u)$ and $H(v)$ are any two parrallel halfspaces in $\mathbb{R}^{n}$ having Gaussian densities $\gamma(H(u))=u$, and $\gamma (H(v))=v$. For example, we can take $H(u) = (-\infty, \Phi^{-1}(u))\times \mathbb{R}^{n-1}$, where $\Phi(t) = \frac{1}{\sqrt{2\pi}}\int_{-\infty}^{t} e^{-\ell^{2}/2}d\ell$.

It is known that (see \cite{MosselNeeman15}) for $s \in [0,1]$ we have 
\begin{align}\label{matin01}
    \begin{pmatrix}
        M_{uu} & s M_{uv}\\
        s M_{uv} & M_{vv}
    \end{pmatrix} \leq 0,
\end{align}
and the reverse inequality holds if $s \in [-1,0]$. Perhaps one of the important properties of $M$ is that it satisfies modified Monge--Amp\`ere equation, i.e., 
\begin{align*}
    M_{uu}M_{vv}-s^{2} M_{uv}=0
\end{align*}
for all $s \in [-1,1]$ and all $u,v \in (0,1)$, see \cite{MosselNeeman15, IvanVol15} for the details. The following corollary of Theorem~\ref{NGJT} gives a noisy extension of Borell's noise stability theorem. 

\begin{theorem}\label{Borelnoise}[Noisy Borell's theorem]
  For any  $r_{1}, r_{2} \in [-1,1]$,  and any $s \in [-1,1]$ set $\rho := s \frac{\sqrt{(1-r_{1}^{2})(1-r_{2}^{2})}}{1-r_{1}r_{2}}$. For any Borel measurable sets $A, B \subset \mathbb{R}^{n}$ we have 
\begin{align}\label{Borelnoiseq}
  \mathbb{P}(X \in A, X_{\rho} \in B) \leq  \mathbb{E} M\left(\left|\frac{A-r_{1}X}{\sqrt{1-r_{1}^{2}}}\right|_{\gamma}, \left|\frac{B-r_{2}X_{\rho}}{\sqrt{1-r_{2}^{2}}}\right|_{\gamma}; s\right)
\end{align}
holds for all $s \in (0,1)$, and the reverse inequality if $s \in (-1,0)$. Here $|C|_{\gamma}$ denotes the standard gaussian measure of a measurable set $C \subset \mathbb{R}^{n}$. 
\end{theorem}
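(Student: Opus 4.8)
The plan is to derive Theorem~\ref{Borelnoise} as a direct application of the Noisy Gaussian--Jensen inequality (Theorem~\ref{NGJT}) to the specific two-function choice $B = M(\,\cdot\,,\,\cdot\,;s)$ together with the indicator functions $f_1 = \chi_A$ and $f_2 = \chi_B$. First I would set up the probabilistic model: let $(X, X_\rho)$ be $\rho$-correlated standard Gaussians in $\mathbb{R}^n$, and recall that $T_{r}\chi_A(x) = \mathbb{E}\,\chi_A(xr + \xi\sqrt{1-r^2}) = \gamma\!\left(\frac{A - xr}{\sqrt{1-r^2}}\right) = \left|\frac{A - r X}{\sqrt{1-r^2}}\right|_\gamma$ evaluated at $x$; this identifies the arguments appearing on the right-hand side of \eqref{Borelnoiseq} as precisely $T_{r_1}f_1$ and $T_{r_2}f_2$. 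The key algebraic identity to record is that, with $\rho = s\,\frac{\sqrt{(1-r_1^2)(1-r_2^2)}}{1-r_1 r_2}$, the covariance structure of the pair $(\xi_1,\xi_2) := (X, X_\rho)$ satisfies $\mathrm{cov}(\xi_1,\xi_2) = \rho\,\mathrm{Id}_n$, and a short computation shows $(1 - r_1 r_2)\rho = s\sqrt{(1-r_1^2)(1-r_2^2)}$, which is exactly the relation needed to match the off-diagonal entries of the Hessian-type condition \eqref{localNGJ} to the sign pattern \eqref{matin01}.

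Next I would verify the curvature hypothesis \eqref{localNGJ} of Theorem~\ref{NGJT} for $B = M(u,v;s)$. The matrix to check is
\begin{align*}
\left\{(1 - r_p r_q) B_{pq} \,\mathrm{cov}(\xi_p,\xi_q)\right\}_{p,q=1}^{2}
= \begin{pmatrix}
(1-r_1^2) M_{uu} \,\mathrm{Id}_n & (1-r_1 r_2)\rho\, M_{uv}\,\mathrm{Id}_n \\
(1-r_1 r_2)\rho\, M_{uv}\,\mathrm{Id}_n & (1-r_2^2) M_{vv}\,\mathrm{Id}_n
\end{pmatrix}.
\end{align*}
For $s \in (0,1)$ we want this $\leq 0$ (to get the forward inequality \eqref{Borelnoiseq}), i.e.\ the reverse of \eqref{localNGJ} applied to $-B$; since $(1-r_1 r_2)\rho = s\sqrt{(1-r_1^2)(1-r_2^2)}$, factoring out the positive diagonal scalars $\sqrt{1-r_1^2}$ and $\sqrt{1-r_2^2}$ reduces the negative semidefiniteness of the above block matrix to that of $\begin{pmatrix} M_{uu} & s M_{uv} \\ s M_{uv} & M_{vv}\end{pmatrix} \leq 0$, which is precisely \eqref{matin01} from Mossel--Neeman. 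For $s \in (-1,0)$ the same computation, now using that \eqref{matin01} reverses, gives the positive semidefinite condition \eqref{localNGJ} and hence the reverse of \eqref{Borelnoiseq}. Finally, I would identify the left-hand side: $\mathbb{E}\,B(f_1(\xi_1), f_2(\xi_2)) = \mathbb{E}\,M(\chi_A(X), \chi_B(X_\rho); s)$; since $\chi_A, \chi_B$ take values in $\{0,1\}$ and $M(0,\cdot\,;s) = M(\cdot,0;s) = 0$ while $M(1,1;s) = \mathbb{P}(X \in H(1), X_s \in H(1)) = 1$ in the limiting/boundary sense, this expectation collapses to $\mathbb{P}(X \in A,\, X_\rho \in B)$, matching the left side of \eqref{Borelnoiseq}.

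The main obstacle I anticipate is a technical rather than conceptual one: Theorem~\ref{NGJT} as stated requires $B$ to be smooth in the interior of the box $C = [0,1]^2$ and continuous up to the boundary, and it requires the test functions to be $C_j$-valued measurable functions; the indicators $\chi_A$ take only the two extreme values $0$ and $1$, so one is applying the inequality to a degenerate boundary case, and $M(u,v;s)$ must be checked to extend continuously to $[0,1]^2$ with the correct boundary values (it does, since $M$ is a probability of an intersection of halfspaces whose Gaussian masses tend to $0$ or $1$). One clean way around any boundary delicacy is to first apply Theorem~\ref{NGJT} to functions $f_j$ taking values in $[\varepsilon, 1-\varepsilon]$ — or to approximate $\chi_A$ by smooth $[0,1]$-valued functions — and pass to the limit using dominated convergence, the continuity of $M$ on $[0,1]^2$, and the joint continuity of $(u,v) \mapsto M(u,v;s)$; the curvature condition \eqref{matin01} holds on the open square $(0,1)^2$, which is all that \eqref{localNGJ} demands. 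A secondary bookkeeping point is to confirm that the pair $(X, X_\rho)$ indeed has a jointly Gaussian law with the stated covariance and full-rank covariance matrix when $\rho \in (-1,1)$, which holds automatically since $|\rho| < 1$ whenever $|s| < 1$ and $r_1, r_2 \in [-1,1]$ are not both $\pm 1$ in a way that makes the denominator vanish; the edge cases $r_1 r_2 = 1$ can be handled separately or by continuity.
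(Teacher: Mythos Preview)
Your proposal is correct and follows essentially the same route as the paper: apply the (reverse direction of the) Noisy Gaussian--Jensen inequality (Theorem~\ref{NGJT}) with $B=M(\cdot,\cdot;s)$ and $f_1=\chi_A$, $f_2=\chi_B$, reduce the local matrix condition to \eqref{matin01} by factoring out the diagonal matrix $\mathrm{diag}(\sqrt{1-r_1^2},\sqrt{1-r_2^2})$ via the identity $(1-r_1r_2)\rho=s\sqrt{(1-r_1^2)(1-r_2^2)}$, and identify $\mathbb{E}\,M(\chi_A(X),\chi_B(X_\rho);s)=\mathbb{P}(X\in A,\,X_\rho\in B)$ from the boundary values of $M$. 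Your additional remarks on boundary continuity and the degenerate cases $r_1r_2=1$ are technically prudent; the paper handles the latter simply by restricting to $r_1,r_2\in(-1,1)$ without loss of generality.
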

The inequality is sharp, i.e., the inequality becomes equality when $A$ and $B$ are parallel halfspaces (see the end of Section~\ref{proofsFROMreal}). 

When $r_{1}=r_{2}=0$ the inequality coincides with Borell's noise stability theorem
\begin{atheorem}[Borell's noise stability~\cite{Borell85}]
For any $s \in [0,1]$ and all measurable subsets $A, B\subset \mathbb{R}^{n}$ we have 
\begin{align}\label{BNS}
     \mathbb{P}(X \in A, X_{s} \in B) \leq  M(|A|_{\gamma}, |B|_{\gamma}; s),
\end{align}  
and the reverse inequality holds if $s \in [-1,0]$. 
\end{atheorem}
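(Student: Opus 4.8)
The plan is to deduce Theorem~\ref{Borelnoise} directly from the Noisy Gaussian--Jensen inequality, Theorem~\ref{NGJT}, applied with $n=2$, $k_1=k_2=n$, $(\xi_1,\xi_2)=(X,X_\rho)$, box $C=[0,1]^2$, function $B=M(\,\cdot\,,\,\cdot\,;s):[0,1]^2\to\mathbb R$, and test functions $f_1=\mathbf 1_A$, $f_2=\mathbf 1_B$. First I would record the two elementary identities that make the pieces fit. Since $T_rg(x)=\mathbb E\,g(xr+\xi\sqrt{1-r^2})$, applying $T_{r_1}$ to an indicator gives $T_{r_1}\mathbf 1_A(x)=\gamma\bigl((A-r_1x)/\sqrt{1-r_1^2}\bigr)=\bigl|(A-r_1x)/\sqrt{1-r_1^2}\bigr|_\gamma$, and likewise for $T_{r_2}\mathbf 1_B$; hence the right-hand side of \eqref{NGJid} with $B=M(\cdot,\cdot;s)$ is exactly the right-hand side of \eqref{Borelnoiseq}. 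For the other side, write $M(u,v;s)=\Phi_2\bigl(\Phi^{-1}(u),\Phi^{-1}(v);s\bigr)$ with $\Phi_2(\cdot,\cdot;s)$ the bivariate standard normal distribution function of correlation $s$: this is continuous on $[0,1]^2$, smooth on $(0,1)^2$, and on the corners $M(1,1;s)=1$, $M(1,0;s)=M(0,1;s)=M(0,0;s)=0$, so $M(\mathbf 1_A(x),\mathbf 1_B(y);s)=\mathbf 1_A(x)\mathbf 1_B(y)$, giving $\mathbb E\,M(\mathbf 1_A(X),\mathbf 1_B(X_\rho);s)=\mathbb P(X\in A,\ X_\rho\in B)$. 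Thus the target \eqref{Borelnoiseq} is the reverse of \eqref{NGJid} when $s\in(0,1)$, and \eqref{NGJid} itself when $s\in(-1,0)$.

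It then remains to verify that the prescribed $\rho$ is exactly what makes the semidefiniteness criterion of Theorem~\ref{NGJT} hold. Because $\mathrm{cov}(X)=\mathrm{cov}(X_\rho)=\mathrm{Id}_n$ and $\mathrm{cov}(X,X_\rho)=\rho\,\mathrm{Id}_n$, the $2n\times2n$ matrix $\{(1-r_pr_q)B_{pq}(\mathbf c)\,\mathrm{cov}(\xi_p,\xi_q)\}_{p,q=1}^2$ equals $\mathcal M(\mathbf c)\otimes\mathrm{Id}_n$, where (with $c_1=u$, $c_2=v$)
\[
\mathcal M(\mathbf c)=\begin{pmatrix}(1-r_1^2)\,M_{uu} & (1-r_1r_2)\,\rho\,M_{uv}\\ (1-r_1r_2)\,\rho\,M_{uv} & (1-r_2^2)\,M_{vv}\end{pmatrix},
\]
so the big matrix is positive (resp. negative) semidefinite iff $\mathcal M(\mathbf c)$ is. By \eqref{matin01} the diagonal entries of $\mathcal M(\mathbf c)$ are $\le0$ for $s\in(0,1)$ and $\ge0$ for $s\in(-1,0)$, so they already have the right sign; and the modified \ma equation $M_{uu}M_{vv}=s^2M_{uv}^2$ yields
\[
\det\mathcal M(\mathbf c)=\bigl[(1-r_1^2)(1-r_2^2)\,s^2-(1-r_1r_2)^2\,\rho^2\bigr]M_{uv}^2 ,
\]
which vanishes identically for the choice $\rho=s\sqrt{(1-r_1^2)(1-r_2^2)}/(1-r_1r_2)$. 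A real symmetric $2\times2$ matrix with zero determinant whose diagonal entries are both $\le0$ (resp. $\ge0$) is negative (resp. positive) semidefinite, which is precisely the criterion needed for the reverse of \eqref{NGJid} (resp. for \eqref{NGJid}); this proves \eqref{Borelnoiseq} and its reverse. I would also note that $X_\rho$ is legitimate and $\mathrm{cov}(\xi)$ has full rank: from $(1-r_1r_2)^2-(1-r_1^2)(1-r_2^2)=(r_1-r_2)^2\ge0$ one gets $|\rho|\le|s|<1$, and the degenerate cases $r_1=\pm1$ or $r_2=\pm1$ (where $\rho=0$ and $T_{r_i}$ is a reflection) are either trivial or handled by continuity in $(r_1,r_2)$.

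For sharpness, take $A=H(a)$, $B=H(b)$ to be parallel halfspaces. Then $T_{r_1}\mathbf 1_{H(a)}(x)$ depends on $x$ only through its first coordinate and equals $\Phi\bigl((\Phi^{-1}(a)-r_1x_1)/\sqrt{1-r_1^2}\bigr)$, i.e.\ the Gaussian measure of a nested parallel halfspace, and similarly for $T_{r_2}\mathbf 1_{H(b)}$; substituting these into $M(\cdot,\cdot;s)=\mathbb P(X\in H(\cdot),X_s\in H(\cdot))$ and performing the one-dimensional Gaussian conditioning, the right-hand side of \eqref{Borelnoiseq} collapses back to $\mathbb P(X\in H(a),X_\rho\in H(b))$, so equality holds. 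The only genuinely nontrivial point in the whole argument is the second paragraph: once the condition is reduced (trivially) to a $2\times2$ one, one must see that the semidefiniteness requirement of Theorem~\ref{NGJT} is reconciled with the stated value of $\rho$ using exactly the modified \ma equation for $M$ and the sign of its Hessian recorded in \eqref{matin01}; everything else is bookkeeping.
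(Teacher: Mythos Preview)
Your proposal is correct and follows the paper's own route: apply Theorem~\ref{NGJT} with $B=M(\cdot,\cdot;s)$ and indicator test functions, identify $\mathbb E\,M(\mathbf 1_A,\mathbf 1_B;s)=\mathbb P(X\in A,X_\rho\in B)$, verify the $2\times2$ sign condition, and check equality on parallel halfspaces. The only cosmetic difference is in the semidefiniteness step: the paper factorizes $\mathcal M(\mathbf c)=\tilde D\bigl(\begin{smallmatrix}M_{uu}&sM_{uv}\\ sM_{uv}&M_{vv}\end{smallmatrix}\bigr)\tilde D$ with $\tilde D=\mathrm{diag}(\sqrt{1-r_1^2},\sqrt{1-r_2^2})$ and then invokes \eqref{matin01} directly, so the Monge--Amp\`ere identity is never needed, whereas you use \eqref{matin01} for the diagonal signs and the Monge--Amp\`ere equation to force $\det\mathcal M(\mathbf c)=0$.
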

Borell’s noise stability theorem has found numerous applications across mathematics, including theoretical computer science, where it plays a central role in the Majority is Stablest theorem (see~\cite{DeMosselNeeman16, MOO10}). The inequality~(\ref{BNS}) also implies the sharp Gaussian isoperimetric inequality; see~\cite{LedouxSlepian}.

Multiversion of (\ref{BNS}) was obtained by Neeman \cite{Neeman14} who showed that if the centered Gaussian vector $(\zeta_{1}, \ldots, \zeta_{n}) \in \mathbb{R}^{n}$  has the property $\mathrm{cov}(\zeta_{i}, \zeta_{j})=m_{ij}\geq 0$, and $m_{ii}=1$, $j=1, \ldots, n$. Then 
\begin{align*}
    N(u_{1}, \ldots, u_{n}) := \mathbb{P}(\zeta_{1}\leq \Phi^{-1}(u_{1}), \ldots, \zeta_{n}\leq \Phi^{-1}(u_{n}))
\end{align*}
defined on $(u_{1}, \ldots, u_{n})\in (0,1)^{n}$ satisfies the inequality 
\begin{align}\label{neem1}
    \left\{ N_{pq}m_{pq}\right\}_{p,q=1}^{n}\leq 0.
\end{align}
Moreover, the determinant of the matrix above is zero for all $(u_{1}, \ldots, u_{n})\in (0,1)^{n}$. It follows from (\ref{neem1}) that $\{N_{pq}m_{pq}\}_{p,q}\otimes \mathrm{Id}_{k}\leq 0$. Thus Theorem~\ref{NGJT} at point $r_{1}=\ldots=r_{n}=0$ applied to $B=N$ gives the following corollary due to Neeman~\cite{Neeman14}
\begin{atheorem}[Multiversion of Borrel's noise stability]
Let $(\xi_{1}, \ldots, \xi_{n})\in \mathbb{R}^{nk}$ be Gaussian random vector with $\xi_{j} \sim N(0, \mathrm{Id}_{k})$, $\mathrm{cov}(\xi_{i}, \xi_{j})=m_{ij}\mathrm{Id}_{k}$ for all $i,j=1, \ldots, n$. We have  
\begin{align*}
    \mathbb{P}(\xi_{1} \in A_{1}, \ldots, \xi_{n} \in A_{n})\leq N(|A_{1}|_{\gamma}, \ldots, |A_{n}|_{\gamma})
\end{align*}
for all measuarble sets $A_{j}\in \mathbb{R}^{k}$.
\end{atheorem}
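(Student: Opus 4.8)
The plan is to read this off from the reverse form of the Noisy Gaussian--Jensen inequality, Theorem~\ref{NGJT}, specialized to $r_1=\cdots=r_n=0$ and applied with $B=N$, after which one substitutes the indicator functions $f_j=\mathbf 1_{A_j}$. I will first assume the correlation matrix $M=(m_{ij})$ (with $m_{ii}=1$ and $m_{ij}\ge0$, the same entries as in (\ref{neem1})) is positive definite, so that $(\zeta_1,\dots,\zeta_n)$ is nondegenerate, $N$ is smooth on the open cube $(0,1)^n$ and continuous on $[0,1]^n$, and $\operatorname{cov}(\xi)=M\otimes\mathrm{Id}_k$ has full rank. The degenerate case is then recovered by replacing $M$ with $(1-\varepsilon)M+\varepsilon\,\mathrm{Id}$, applying the nondegenerate case, and letting $\varepsilon\to0$ via continuity of Gaussian probabilities in the covariance.

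Next I would record the local matrix condition for $B=N$. By Neeman's inequality (\ref{neem1}), $\{N_{pq}(\mathbf u)\,m_{pq}\}_{p,q=1}^n\le0$ for every $\mathbf u\in(0,1)^n$; tensoring with $\mathrm{Id}_k$ and using $\operatorname{cov}(\xi_p,\xi_q)=m_{pq}\mathrm{Id}_k$ upgrades this to $\{N_{pq}(\mathbf u)\operatorname{cov}(\xi_p,\xi_q)\}_{p,q=1}^n\le0$ on $\operatorname{int}([0,1]^n)$, which is exactly the reverse of the local condition (\ref{localNGJ}) at $r=0$. Invoking Theorem~\ref{NGJT} with $C=[0,1]^n$, $B=N$ and all $r_j=0$, and noting that $T_0f_j(\xi_j)=\mathbb Ef_j(\xi_j)$ is deterministic, the reverse direction of (\ref{NGJid}) then says $\mathbb E\,N(f_1(\xi_1),\dots,f_n(\xi_n))\le N(\mathbb Ef_1(\xi_1),\dots,\mathbb Ef_n(\xi_n))$ for all measurable $f_j:\mathbb R^k\to[0,1]$.

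Finally I would plug in $f_j=\mathbf 1_{A_j}$. The right-hand side becomes $N(|A_1|_\gamma,\dots,|A_n|_\gamma)$ directly, since $\mathbb E\,\mathbf 1_{A_j}(\xi_j)=|A_j|_\gamma$. For the left-hand side, $(\mathbf 1_{A_1}(\xi_1),\dots,\mathbf 1_{A_n}(\xi_n))$ takes values in $\{0,1\}^n$, so the expectation expands as a sum over the $2^n$ sign patterns $v$ of the events $\{\xi_j\in A_j\}$, weighted by the corner values $N(v)$; because $\Phi^{-1}(0^+)=-\infty$ and $\Phi^{-1}(1^-)=+\infty$, each $N(v)$ vanishes unless $v=(1,\dots,1)$, where $N(1,\dots,1)=1$, so the sum collapses to $\mathbb P(\xi_1\in A_1,\dots,\xi_n\in A_n)$. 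This is the claimed inequality.

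The only genuinely nontrivial ingredient is the sign condition $\{N_{pq}m_{pq}\}\le0$, namely (\ref{neem1}); in this paper it is quoted from \cite{Neeman14}, so it functions as a citation rather than a real obstacle, and the rest---the identity $T_0f=\mathbb Ef$, the corner values of $N$, and the indicator bookkeeping---is elementary. A self-contained treatment would instead have to reprove (\ref{neem1})---equivalently, that the Gaussian orthant probability $N$ is concave along the directions prescribed by the correlation matrix---which is the real content behind Borell's theorem and is normally obtained through Ornstein--Uhlenbeck heat-flow monotonicity.
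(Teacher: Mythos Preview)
Your proposal is correct and follows essentially the same route as the paper: apply Theorem~\ref{NGJT} (equivalently, the Gaussian--Jensen inequality, Theorem~\ref{GJI}) at $r_1=\cdots=r_n=0$ with $B=N$, using Neeman's inequality (\ref{neem1}) tensored with $\mathrm{Id}_k$ to verify the local condition, and then substitute indicators. You supply more detail than the paper does---the positive-definite reduction, the corner-value computation showing $\mathbb{E}\,N(\mathbf 1_{A_1}(\xi_1),\dots,\mathbf 1_{A_n}(\xi_n))=\mathbb{P}(\xi_1\in A_1,\dots,\xi_n\in A_n)$---but the argument is the same.
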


\subsection{An application to Ehrhard inequality}
Notice that by shifting and rescaling test functions if necessary Theorem~\ref{GJI} holds true for any Gaussian random vector $(\xi_{1}, \ldots, \xi_{n}) \in \mathbb{R}^{k_{1}+\ldots+k_{n}}$. 

It was proved in \cite{IvanisviliEhrhard} that the application of Theorem~\ref{GJI} to the case $n=3$, $k_{1}=k_{2}=k_{3}=k$, specifically chosen sequence of functions $B_{N}(c_{1}, c_{2}, c_{3}) = H(c_{1}, c_{2})^{N} c_{3}^{\beta(N)}$ (for some map $\beta : [0, \infty) \mapsto \mathbb{R}$), sequence of Gaussian random vectors $(\xi_{1, N}, \xi_{2, N}, \xi_{3, N}) \in \mathbb{R}^{3k}$, and test functions $(f_{1, N}, f_{2, N}, f_{3, N})$ in the limit $N\to \infty$ recovers the following general Ehrhard type  inequality

\begin{atheorem}\label{genehrhard}
  Let $H(x,y)$ be such that  $H \in C^{3}(C_{1}\times C_{2})$ and $H_{x}, H_{y}$ both never vanish on $C_{1}\times C_{2}$. Assume $a, b >0$. The inequality 
    \begin{align} \label{supeh}
        \int_{\mathbb{R}^{n}} \sup_{ax+by=t} H(f(x), g(y)) \mathrm{d}\gamma(t) \geq H\left(\int_{\mathbb{R}^{n}} f(t) \mathrm{d}\gamma(t), \int_{\mathbb{R}^{n}} g(t)\mathrm{d}\gamma(t) \right)
    \end{align}
    holds for all Borell measurable $f : \mathbb{R}^{n} \mapsto C_{1}$ and $g : \mathbb{R}^{n} \mapsto C_{2}$ if  
    \begin{align}\label{loceh}
        a^{2} \frac{H_{xx}}{H_{x}^{2}} + (1-a^{2}-b^{2})\frac{H_{xy}}{H_{x}H_{y}} + b^{2} \frac{H_{yy}}{H_{y}^{2}}\geq 0\quad \text{and} \quad  |1-a^{2}-b^{2}|\leq 2ab.
    \end{align}
\end{atheorem}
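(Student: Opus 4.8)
The plan is to derive Theorem~\ref{genehrhard} from the Gaussian--Jensen inequality (Theorem~\ref{GJI}) by the limiting procedure indicated above: the family $B_N(c_1,c_2,c_3)=H(c_1,c_2)^N c_3^{\beta(N)}$ is fed into Theorem~\ref{GJI} with $n=3$, $k_1=k_2=k_3=n$, together with a sequence of centered Gaussian vectors $(\xi_{1,N},\xi_{2,N},\xi_{3,N})\in\mathbb{R}^{3n}$ and a sequence of test functions, and one then lets $N\to\infty$. The reason for raising $H$ to a large power $N$ and for introducing a third coordinate is this: a naive use of Theorem~\ref{GJI} with $B=H$ and $n=2$, chained with the trivial bound $\mathbb{E}[H(f(X),g(Y))]\le\mathbb{E}_Z\!\bigl[\sup_{ax+by=Z}H(f(x),g(y))\bigr]$, already yields \eqref{supeh}, but only under the full $2\times2$ semidefiniteness of $\bigl(\begin{smallmatrix}H_{xx}&\rho H_{xy}\\ \rho H_{xy}&H_{yy}\end{smallmatrix}\bigr)$, whereas \eqref{loceh} asks only that this matrix be nonnegative when tested against the single vector $(a/H_x,b/H_y)$. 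Raising to the power $N$ does double duty: it degenerates the semidefiniteness condition so that, with the one extra degree of freedom carried by the $c_3$-coordinate, only that direction survives, and it simultaneously turns the right-hand expectation into a supremum via Laplace asymptotics.

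First I would fix the geometry. Put $\rho:=\frac{1-a^2-b^2}{2ab}$; the hypothesis $|1-a^2-b^2|\le 2ab$ in \eqref{loceh} is exactly what makes $|\rho|\le1$, so there exist jointly Gaussian $X,Y\sim N(0,\mathrm{Id}_n)$ with $\mathrm{cov}(X,Y)=\rho\,\mathrm{Id}_n$, and then $Z:=aX+bY\sim N(0,\mathrm{Id}_n)$. I would take $\xi_{1,N},\xi_{2,N}$ to be copies of $X,Y$ and $\xi_{3,N}$ to be $Z$, up to an $N$-dependent perturbation, so that every covariance block is a scalar multiple of $\mathrm{Id}_n$ and the matrix in Theorem~\ref{GJI} is of the form $M_N\otimes\mathrm{Id}_n$ for some $3\times3$ matrix $M_N$. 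The central step is to show that $M_N\ge0$ reduces, as $N\to\infty$ and for the correct choice of $\beta(N)$ (of order $N$) and of the auxiliary covariances, to the scalar inequality $a^2\frac{H_{xx}}{H_x^2}+(1-a^2-b^2)\frac{H_{xy}}{H_xH_y}+b^2\frac{H_{yy}}{H_y^2}\ge0$ of \eqref{loceh}. The mechanism is that $\nabla^2(H^N)=N(N-1)H^{N-2}\,\nabla H\otimes\nabla H+NH^{N-1}\nabla^2H$: after pulling out the common scalar factor, the leading part of $M_N$ is a fixed degenerate matrix with one-dimensional kernel --- the ``Lagrange direction'' of the constraint $ax+by=t$, whose $c_1,c_2$ entries are proportional to $a/H_x$ and $b/H_y$ and whose $c_3$ entry is produced by the $c_3^{\beta(N)}$ factor --- and semidefiniteness of $M_N$ amounts to nonnegativity of the $O(1/N)$ correction along that kernel, which a direct computation identifies with the left-hand side of \eqref{loceh}. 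Pinning down $\beta(N)$ and the $N$-dependent covariance perturbation so that the kernel direction and the sign of the correction come out exactly right is, I expect, the main obstacle.

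Granting the covariance condition, Theorem~\ref{GJI} gives $H(\mathbb{E}f_{1,N},\mathbb{E}f_{2,N})^N(\mathbb{E}f_{3,N})^{\beta(N)}\le\mathbb{E}\bigl[H(f_{1,N}(X),f_{2,N}(Y))^N f_{3,N}(\xi_{3,N})^{\beta(N)}\bigr]$. Taking $1/N$-th powers, and arranging $f_{3,N}\to1$ with $\beta(N)/N\to0$, the left-hand side tends to $H(\mathbb{E}f,\mathbb{E}g)$. For the right-hand side I would condition on $Z=aX+bY$ and apply the Laplace--Varadhan principle, according to which $\bigl(\mathbb{E}[u(X,Y)^N\mid Z=t]\bigr)^{1/N}$ converges to the essential supremum of $u$ over the fiber $\{ax+by=t\}$; since for $|\rho|<1$ the conditional law of $(X,Y)$ given $Z=t$ is a non-degenerate Gaussian supported on the whole fiber (the boundary case $|\rho|=1$ being recovered by continuity in $\rho$), and $H$ is continuous with $H_x,H_y$ nowhere vanishing, this essential supremum equals $\sup_{ax+by=t}H(f(x),g(y))$. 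A dominated-convergence argument based on the $C^3$ bounds on $H$ then produces $\int_{\mathbb{R}^n}\sup_{ax+by=t}H(f(x),g(y))\,\mathrm{d}\gamma(t)$ as the limit of the right-hand side, which is \eqref{supeh}. The bookkeeping that makes all these limits exist and interchange --- the precise choices of $f_{3,N}$ and $\beta(N)$, and the integrability estimates underlying the conditional Laplace asymptotics --- is where the remaining effort goes; the rest is a direct appeal to Theorem~\ref{GJI} together with standard asymptotics.
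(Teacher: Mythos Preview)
Your overall strategy is exactly the one the paper records (citing \cite{IvanisviliEhrhard}): feed $B_N(c_1,c_2,c_3)=H(c_1,c_2)^N c_3^{\beta(N)}$ into the Gaussian--Jensen inequality (Theorem~\ref{GJI}) with $n=3$, an $N$-dependent Gaussian triple and $N$-dependent test functions, then let $N\to\infty$. The paper does not carry out the details either, so at the level of the outline you are aligned with it, and your identification of $\rho=(1-a^2-b^2)/(2ab)$, of $Z=aX+bY\sim N(0,\mathrm{Id}_n)$, and of how the leading $N(N-1)H^{N-2}\nabla H\otimes\nabla H$ term collapses the $3\times3$ covariance condition onto a single direction is correct.

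The genuine gap is in your limit on the right-hand side. With $f_{3,N}\to1$ and $\beta(N)/N\to0$ (which already contradicts your earlier ``$\beta(N)$ of order $N$''), the quantity $\bigl(\mathbb{E}[H(f(X),g(Y))^N f_{3,N}^{\beta(N)}]\bigr)^{1/N}$ is essentially $\|H(f(X),g(Y))\|_{L^N}$, which tends to the \emph{global} essential supremum of $H(f(X),g(Y))$, not to $\int\sup_{ax+by=t}H(f(x),g(y))\,\mathrm{d}\gamma(t)$. Conditioning on $Z$ and applying Laplace fiberwise does not repair this: the $1/N$-th root sits outside the $Z$-expectation, so what you are computing is $(\mathbb{E}_Z[S(Z)^N])^{1/N}\to\|S\|_\infty$ with $S(t)=\sup_{ax+by=t}H(f(x),g(y))$, and no dominated-convergence step can push the root inside the integral. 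The third coordinate must do real work, not disappear: for instance (when $H>0$), if $\xi_{3,N}$ is a nondegenerate perturbation of $Z$, $\beta(N)$ is taken \emph{negative} of order $N$ (say $\beta(N)=1-N$), and $f_{3,N}$ is the fiberwise supremum $S$ itself, then $H^N f_3^{\,\beta(N)}=H(H/S)^{N-1}\le S$ pointwise, so the right-hand side of Gaussian--Jensen is bounded above by $\mathbb{E}[S(Z)]$ \emph{before} any limit is taken, and the $N\to\infty$ limit is needed only to relax the $3\times3$ positivity down to the scalar condition \eqref{loceh}. So the obstacle you flag is real, but the mechanism by which the $\mathrm{d}\gamma$-integral of suprema emerges is structurally different from the Laplace--Varadhan picture you sketch.
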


In fact, the main result of \cite{IvanisviliEhrhard} shows that \eqref{supeh} also implies \eqref{loceh}. However, this implication does not follow directly from the Gaussian--Jensen inequality; its verification relies on a different set of techniques.

An application of Theorem~\ref{genehrhard} to a specific choice $H(x,y) = \Phi(a\Phi^{-1}(x)+b\Phi^{-1}(y))$, and certain test functions $f$ and $g$ gives the Ehrhard inequality,  i.e.,  for all measurable sets $A, B \subset \mathbb{R}^{n}$ such that $aA+bB$ is measurable we have 
\begin{align*}
 \Phi^{-1}(|aA+bB|_{\gamma}) \geq a\Phi^{-1}(|A|_{\gamma})+b\Phi^{-1}(|B|_{\gamma}).   
\end{align*}
In this inequality parallel haflspaces achieve  the equality. This is a sharp Gaussian analog of a fameous Brunn--Minkowski inequality which among others implies Gaussian isoperimetric inequality \cite{IvanVol15}. 

Another application of Theorem~\ref{genehrhard} with $H(x,y)=x^{\lambda}y^{1-\lambda}$ recovers Prekopa--Leindler inequality, see \cite{IvanisviliEhrhard} for details. We remark that Pr\'ekopa--Leindler inequality can be also obtained via limit argument of Brascamp and Lieb \cite{BrascampLieb766} from Young's convolution inequality which in turn follows from Theorem~\ref{PaourisDafnisChen} applied to 3 functions, see \cite{ChenPaourDafn}.

\subsection{Organization of the paper}

We begin by proving the multiversion $FB$ forward and reverse real hypercontractivity, stated in Theorem~\ref{FBrealTheorem}, in Section~\ref{reverseProof}. In Section~\ref{proofsFROMreal}, we establish Theorems~\ref{MRH}--\ref{chaoses2}. We then turn to the multiversion $FB$ complex hypercontractivity, proving Theorem~\ref{complexCaseTheorem} in Section~\ref{ComplexProof}. Finally, in Section~\ref{proofsFROMcomplex}, we derive Theorems~\ref{MCH}--\ref{chaoses}.

\section{\textbf{Proof of Theorem \ref{FBrealTheorem}}}\label{reverseProof}

While vectors $v \in \mathbb{R}^{n}$ were previously considered as rows, in this section we treat them as columns.
 
Let $d\gamma^{(s)}$ denote the  standard Gaussian measure on $\R^k$ of variance $s>0$ by
\begin{equation*}
\mathrm{d}\gamma^{(s)}(x)=\frac{1}{(2\pi s)^{\frac{k}{2}}}e^{-|x|^2/2s}\,\mathrm{d}x. 
\end{equation*}	
Let $Q_sB$ be the heat flow of a function $B(x)$,  $B:\R^k \mapsto \mathbb{C}$ evaluated at zero, namely 
\begin{equation}\label{heat flow}
	Q_sB=\int_{\R^k} 
	B\,\mathrm{d}\gamma^{(s)}.
\end{equation}
By the heat equation it holds
\begin{equation}\label{heat'-identity1}
	\frac{\mathrm{d}}{\mathrm{d}s}Q_sB=\frac{1}{2}Q_s(\Delta_x B),
\end{equation}
where $\Delta_x$ denotes the Laplacian operator with respect to the variable $x$. For what follows, we need to indicate the variable in which the flow $Q_s$ is applied. For instance, in \eqref{heat'-identity1}, we will denote it as $Q_s^x$ instead of $Q_s$. Let $A_{j} : \mathbb{R}^{k}\to \mathbb{R}^{k_{j}}$ be matrices such that $A_{j}A^{T}_{j}=\mathrm{Id}_{k_{j}}$ for all $j=1, \ldots, n$.

	\begin{proposition}\label{MFstatement}
		 Let $F : [0,1] \to \mathbb{R}$ and $ B : [0,1]^n \to [0,1] $ be functions that are smooth in their domains. Assume  $F'(t)\neq 0$ on $[0,1],$ and
\begin{align}\label{lemmaConvF1}
(t,y) \mapsto \frac{F''(t)}{|F'(t)|}y^{2}
\end{align}
is concave on $[0,1]\times \mathbb{R}$. Let $r=(r_1,\ldots,r_n)\in[-1,1]^n$. The following are equivalent:
	\vspace{0.2cm}
		\begin{enumerate}
			\item \label{Global} \textbf{Global:} For all measurable functions $f_p:\R^{k_p}\to[0,1]$, $p=1,\ldots,n$  it holds
			\vspace{0.2cm}
			\begin{align}\label{FBreal}
				\int_{\R^k}F \Big(B(T_{r_1}&f_1(A_1 x), \ldots, T_{r_n}f_n( A_n x))\Big) \,\mathrm{d}\gamma(x)\\
				&\nonumber\geq F\left(	\int_{\R^k}B(f_1(A_1 x), \ldots, f_n(A_n x))\,\mathrm{d}\gamma(x)\right).
			\end{align}  
			\vspace{0.2cm}
			\item \label{Local}\textbf{Local:} For any ${\bf{c}}\in[0,1]^n$ it holds
			\vspace{0.2cm}
\begin{align}\label{posDefi}
\left\{\left((1-r_pr_q)F'(B({\bf{c}}))B_{pq}({\bf{c}})-r_pr_qF''(B({\bf{c}}))B_{p}({\bf{c}})B_{q}({\bf{c}}) \right)A_pA_q^T \right\}_{p,q=1}^n\leq0
\end{align}
\vspace{0.2cm}
\item \label{monotonicity} \textbf{Monotonicity:} Let $p=1,\ldots,n$ and  $f_p:\R^{k_p}\to[0,1]$ be measurable functions. For $p=1,\ldots,n$ consider the function given by 
\begin{equation}\label{gforMeasurebla}
g_p(u,x,s)=\int_{\mathbb{R}^{k_p}} f_p(A_p u + r_p A_p x + \sqrt{1 - r_p^2} y) \, \mathrm{d}\gamma^{(1-s)}(y),
\end{equation}
where $x,u\in\R^k$, $s\in[0,1]$. Then,  the function $C(s):[0,1]\to \R$ given by  
\vspace{0.2cm}
\begin{align}\label{CinRealcase}
C(s)=Q_{1-s}^x F\big(Q_s^uB\big(g_1(u,x,s)\big),\ldots,g_n(u,x,s))\big)
\end{align}
\vspace{0.2cm}
is nonincreasing.
\end{enumerate} 
\end{proposition}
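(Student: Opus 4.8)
The plan is to prove the cycle "Local $\Rightarrow$ Monotonicity $\Rightarrow$ Global $\Rightarrow$ Local", with essentially all the work in the first arrow; for readability I assume $F'>0$ on $[0,1]$ (the case $F'<0$ is verbatim the same after dividing by $|F'|$ rather than $F'$, which is exactly why \eqref{lemmaConvF1} is written with $|F'|$). For \emph{Monotonicity $\Rightarrow$ Global}: unwinding \eqref{gforMeasurebla} one checks $g_p(0,x,0)=T_{r_p}f_p(A_px)$ and $g_p(u,x,1)=f_p(A_pu+r_pA_px)$, while $Q_0^x,Q_0^u$ are evaluation at the origin and $Q_1^x,Q_1^u$ are integration against $\gamma$, so $C(0)$ is the left-hand side of \eqref{FBreal} and $C(1)$ its right-hand side; thus "$C$ nonincreasing" is precisely \eqref{FBreal}. (Continuity of $C$ up to $s\in\{0,1\}$, and the differentiations under the integral sign used below, are routine once one notes the flows $Q$ smooth whenever $|r_p|<1$; the cases $|r_p|=1$ follow by letting $|r_p|\uparrow1$, or one first treats continuous $f_p$ and extends by density.) For \emph{Global $\Rightarrow$ Local}: fix $\mathbf{c}\in(0,1)^n$ and vectors $\xi_p\in\R^{k_p}$, pick smooth $\sigma_p:\R\to[0,1]$ equal to the identity near $c_p$, and test \eqref{FBreal} on $f_p(y)=\sigma_p(c_p+\eps\langle\xi_p,y\rangle)$; since the noise operator multiplies the degree-one part of a polynomial by $r_p$, one has $T_{r_p}f_p(A_px)=c_p+\eps r_p\langle A_p^T\xi_p,x\rangle+O(\eps^3)$ and $f_p(A_px)=c_p+\eps\langle A_p^T\xi_p,x\rangle+O(\eps^3)$ (the remainder, and the super-exponentially small contribution of the region where $\sigma_p$ leaves the identity, are harmless). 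Expanding both sides of \eqref{FBreal} to order $\eps^2$ and using $\mathbb{E}[\langle a,X\rangle\langle a',X\rangle]=\langle a,a'\rangle$ for $X\sim N(0,\mathrm{Id}_k)$, the first-order terms cancel and "$\mathrm{LHS}\ge\mathrm{RHS}$" forces
\begin{align*}
\sum_{p,q}\Big((1-r_pr_q)F'(B(\mathbf{c}))B_{pq}(\mathbf{c})-r_pr_qF''(B(\mathbf{c}))B_p(\mathbf{c})B_q(\mathbf{c})\Big)\langle A_p^T\xi_p,A_q^T\xi_q\rangle\le 0,
\end{align*}
which is exactly \eqref{posDefi} at $\mathbf{c}$; the boundary of $[0,1]^n$ follows by continuity.

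The main step is \emph{Local $\Rightarrow$ Monotonicity}. Write $v_p:=A_pu+r_pA_px\in\R^{k_p}$, $\psi_p(v,s):=\int_{\R^{k_p}}f_p(v+\sqrt{1-r_p^2}\,y)\,\mathrm d\gamma^{(1-s)}(y)$, so $g_p=\psi_p(v_p,s)\in[0,1]$; put $\mathbf{g}=(g_1,\dots,g_n)$, $\omega_p:=(\nabla_v\psi_p)(v_p,s)\in\R^{k_p}$, $D_p:=(\Delta_v\psi_p)(v_p,s)$, $\beta(x,s):=Q_s^uB(\mathbf{g})$. Using $A_pA_p^T=\mathrm{Id}_{k_p}$ one gets $\Delta_ug_p=D_p$, $\Delta_xg_p=r_p^2D_p$, $\langle\nabla_ug_p,\nabla_ug_q\rangle=\langle A_p^T\omega_p,A_q^T\omega_q\rangle$, $\langle\nabla_xg_p,\nabla_xg_q\rangle=r_pr_q\langle A_p^T\omega_p,A_q^T\omega_q\rangle$, and, by \eqref{heat'-identity1} and a change of variables, $\partial_sg_p=-\tfrac12(1-r_p^2)D_p$. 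Differentiating \eqref{CinRealcase} via \eqref{heat'-identity1} (in $x$ and in $u$) gives $C'(s)=Q_{1-s}^x[F'(\beta)(\partial_s\beta-\tfrac12\Delta_x\beta)-\tfrac12F''(\beta)|\nabla_x\beta|^2]$, and a direct computation in which \emph{every $D_p$-term cancels} yields $\partial_s\beta-\tfrac12\Delta_x\beta=\tfrac12Q_s^u[\sum_{p,q}(1-r_pr_q)B_{pq}(\mathbf{g})\langle A_p^T\omega_p,A_q^T\omega_q\rangle]$ and $\nabla_x\beta=Q_s^u[\sum_pr_pB_p(\mathbf{g})A_p^T\omega_p]$. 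Hence, writing $\mathbb{E}_u:=Q_s^u$, $b:=B(\mathbf{g})$, $\mathbf{w}:=\sum_pr_pB_p(\mathbf{g})A_p^T\omega_p$, $R:=\sum_{p,q}(1-r_pr_q)B_{pq}(\mathbf{g})\langle A_p^T\omega_p,A_q^T\omega_q\rangle$,
\begin{align*}
C'(s)=Q_{1-s}^x\Big[\tfrac12F'(\mathbb{E}_ub)\,\mathbb{E}_uR-\tfrac12F''(\mathbb{E}_ub)\,|\mathbb{E}_u\mathbf{w}|^2\Big],
\end{align*}
so by positivity of $Q_{1-s}^x$ it suffices to show $F'(\mathbb{E}_ub)\,\mathbb{E}_uR\le F''(\mathbb{E}_ub)\,|\mathbb{E}_u\mathbf{w}|^2$ for each fixed $(x,s)$. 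Now \eqref{posDefi}, read at $\mathbf{c}=\mathbf{g}(u,x,s)\in[0,1]^n$ with test vectors $\xi_p=\omega_p$, is exactly the pointwise inequality $R\le\frac{F''(b)}{F'(b)}|\mathbf{w}|^2$; averaging in $u$ and then applying Jensen's inequality for the concave map $(t,\mathbf{y})\mapsto\frac{F''(t)}{F'(t)}|\mathbf{y}|^2$ on $[0,1]\times\R^k$ (a coordinatewise sum of the concave function in \eqref{lemmaConvF1}) gives $\mathbb{E}_uR\le\mathbb{E}_u[\frac{F''(b)}{F'(b)}|\mathbf{w}|^2]\le\frac{F''(\mathbb{E}_ub)}{F'(\mathbb{E}_ub)}|\mathbb{E}_u\mathbf{w}|^2$; multiplying by $F'(\mathbb{E}_ub)>0$ closes the estimate, so $C'(s)\le 0$.

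The main obstacle is this last arrow: carrying out the differentiation of the doubly averaged functional \eqref{CinRealcase} and verifying the cancellation of all $D_p=\Delta_v\psi_p$ terms, and—more conceptually—recognizing that the residual inequality is precisely of the shape "apply the local condition \eqref{posDefi} pointwise in $u$, then Jensen against the inner flow $Q_s^u$", the hypothesis \eqref{lemmaConvF1} being tailored to validate exactly that Jensen step. The endpoint continuity of $C$ and the legitimacy of the differentiations under the integral sign are a secondary, routine technical matter.
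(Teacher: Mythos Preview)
Your proof is correct and follows essentially the same route as the paper's: the same cycle of implications, the same choice of near-linear test functions for \emph{Global $\Rightarrow$ Local}, the same heat-flow differentiation of $C(s)$ with the crucial cancellation of the Laplacian terms $D_p$ for \emph{Local $\Rightarrow$ Monotonicity}, and the same Jensen step against the concave map $(t,y)\mapsto \tfrac{F''(t)}{|F'(t)|}y^2$ to handle the inner flow $Q_s^u$. The only difference is organizational: the paper carries out the computation coordinate-by-coordinate (defining $\mathcal{L}_j$ and $\Psi_j$ for each $j=1,\dots,k$ and applying Jensen to each scalar component $Y=\sum_p B_p(\mathbf{c})(g_p)_{x_j}$), whereas you work directly with $\nabla_x$, $\Delta_x$ and the full vector $\mathbf{w}$, invoking concavity of $(t,\mathbf{y})\mapsto \tfrac{F''(t)}{F'(t)}|\mathbf{y}|^2$ on $[0,1]\times\R^k$ as a sum of the one-dimensional concave pieces---an equivalent but more compact packaging.
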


\begin{proof}		
\eqref{Global}$\implies$\eqref{Local} Let 
\begin{equation}
(\xi_1,\ldots,\xi_n)\sim (A_1x,\ldots,A_nx)
\end{equation}
where $x$ is the standard normal random vector in $\R^k$. Pick $a=(a_{1}, \ldots, a_{n})\in (0,1)^{n}$ and let $\delta = (\delta_{1}, \ldots, \delta_{n})$.  By Taylor's formula we have 
\begin{align*}
    B(a+\delta) = B(a) + \sum_{j=1}^{n} B_{j}(a) \delta_{j} + \frac{1}{2} \sum_{i,j=1}^{n}B_{ij}(a)\delta_{i}\delta_{j} + O\left( \sum_{j=1}^{n} |\delta_{j}|^{3}\right)
\end{align*}
provided that $\|\delta\| \leq \delta'$, where $\delta'$ is sufficiently small. 

Pick any fixed $\omega_{j} \in \mathbb{R}^{k_{j}}$. Set $f_{j}(\xi_{j}) = a_{j} + \varepsilon \omega_{j} \cdot \xi_{j} 1_{\|\xi_{j}\|<\varepsilon^{-1/2}}$, for $j=1, \ldots, n$. Clearly if $\varepsilon<\varepsilon'(\omega)$ is small enough then $\|\{\varepsilon \omega_{j} \cdot \xi_{j} 1_{\|\xi_{j}\|<\varepsilon^{-1/2}}\}_{j=1}^{n}\|<\delta'$. Thus there exists $\varepsilon'=\varepsilon'(\omega, a)$ such that  
\begin{align*}
    &B(...,a_{j} + \varepsilon \omega_{j} \cdot \xi_{j} 1_{\|\xi_{j}\|<\varepsilon^{-1/2}},...) = \\
    &B(a)+\varepsilon \sum_{j=1}^{n}B_{j}(a)\omega_{j} \cdot \xi_{j} 1_{\|\xi_{j}\|<\varepsilon^{-1/2}}+\\
    &\frac{\varepsilon^{2}}{2}\sum_{i,j=1}^{n} B_{ij}(a) \omega_{i} \cdot \xi_{i} 1_{\|\xi_{i}\|<\varepsilon^{-1/2}} \omega_{j} \cdot \xi_{j} 1_{\|\xi_{j}\|<\varepsilon^{-1/2}}+\\
    &\varepsilon^{3} O\left(\sum_{j=1}^{n} (\omega_{j} \cdot \xi_{j})^{3} 1_{\|\xi_{j}\|<\varepsilon^{-1/2}} \right)=: B(a)+\varepsilon I+\frac{\varepsilon^{2}}{2}II+\varepsilon^{3}III
\end{align*}
holds for all $\varepsilon$, $0<\varepsilon<\varepsilon'$.
We claim that $\mathbb{E} I = O(e^{-c/\varepsilon})$ for some $c>0$. Indeed, notice that $\mathbb{P}(\|\xi_{j}\|>\varepsilon^{-1/2}) < 2e^{-C/\varepsilon}$ for some $C>0$. Thus we have 
\begin{align*}
    |\mathbb{E}\,  \omega_{j} \cdot \xi_{j} 1_{\|\xi_{j}\|<\varepsilon^{-1/2}}|  \leq \| \omega_{j} \cdot \xi_{j}\|_{2} \| 1_{\|\xi_{j}\|<\varepsilon^{-1/2}}\|_{2} \leq C' e^{-C/\varepsilon}
\end{align*}
holds for all $j=1, \ldots, n$, where $C$ and  $C'$ are some positive constants. Next, we claim that 
\begin{align*}
\mathbb{E} II = \mathbb{E} \sum_{i,j=1}^{n}B_{ij}(a)\, \omega_{i} \cdot \xi_{i}\,   \omega_{j} \cdot \xi_{j}  + O(e^{-c/\varepsilon}) = \sum_{i,j=1}^{n}B_{ij}(a)\omega_{i}^{T}A_{i}A_{j}^{T}\omega_{j}+ O(e^{-c/\varepsilon})
\end{align*}
for some $c>0$. Indeed, we have
\begin{align*}
   \mathbb{E}\,   \omega_{i} \cdot \xi_{i} 1_{\|\xi_{i}\|<\varepsilon^{-1/2}} \omega_{j} \cdot \xi_{j} 1_{\|\xi_{j}\|<\varepsilon^{-1/2}} &= \mathbb{E}\,   \omega_{i} \cdot \xi_{i} (1-1_{\|\xi_{i}\|\geq \varepsilon^{-1/2}}) \omega_{j} \cdot \xi_{j} (1-1_{\|\xi_{j}\|\geq \varepsilon^{-1/2}})\\&=
   \mathbb{E} \omega_{i}\cdot \xi_{i} \omega_{j} \cdot \xi_{j}+\mathrm{rem},
\end{align*}
where  ``rem'' consists of the sum of the terms which contain the factors of the form $1_{\|\xi_{i}\|\geq \varepsilon^{-1/2}}$. An application of H\"older inequality for many functions shows that $\mathbb{E}\, \mathrm{rem} = O(e^{-c/\varepsilon})$ due to the fact that $\| \omega_{j} \cdot \xi\|_{p}$ are finite for any $p\geq 1$. 

Finally $\mathbb{E} III <C<\infty$ for some $C >0$ independent of $\varepsilon$. Thus we obtain 

\begin{align*}
   \mathbb{E}\,  B(...,a_{j} + \varepsilon \omega_{j} \cdot \xi_{j} 1_{\|\xi_{j}\|<\varepsilon^{-1/2}},...) = B(a) + \frac{\varepsilon^{2}}{2}\sum_{i,j=1}^{n}B_{ij}(a)\, \omega_{i}^{T}A_{i}A_{j}^{T}\omega_{j} + O(\varepsilon^{3})
\end{align*}
So, we obtain 
\begin{align*}
    &F( \mathbb{E}\,  B(...,a_{j} + \varepsilon \omega_{j} \cdot \xi_{j} 1_{\|\xi_{j}\|<\varepsilon^{-1/2}},...)) = \\
    &F(B(a))+F'(B(a))\frac{\varepsilon^{2}}{2}\sum_{i,j=1}^{n}B_{ij}(a)\, \omega_{i}^{T}A_{i}A_{j}^{T}\omega_{j}+O(\varepsilon^{3}).
\end{align*}

Next, let us investigate the behaviour of $\mathbb{E} F(B(f_{1}(\xi_{1}), \ldots, f_{n}(\xi_{n})))$. Let $G = F\circ B$. Let $\zeta_{j}(\xi_{j}) =\varepsilon \omega_{j} \cdot \xi_{j} 1_{\|\xi_{j}\|<\varepsilon^{-1/2}}$ for all $j=1, \ldots, n$. We have 
\begin{align*}
    \| T_{r_{j}} \zeta_{j} \|_{\infty} \leq \|\zeta_{j}\|_{\infty} \leq \|\omega_{j}\| \varepsilon^{1/2} \to 0
\end{align*}
as $\varepsilon \to 0$. 
Next, we have 
\begin{align*}
    \mathbb{E}G(...,a_{j}+T_{r_{j}}(\zeta_{j}),...)&=G(a)+ \sum_{j=1}^{n}G_{j}(a) \mathbb{E}T_{r_{j}}(\zeta_{j})+\\
    &\quad +\frac{1}{2}\sum_{i,j=1}^{n} G_{ij}(a)  \mathbb{E}T_{r_{i}}(\zeta_{i}) T_{r_{j}}(\zeta_{j})+\\
    &\quad +O\left(\sum_{j=1}^{n} \mathbb{E}|T_{r_{j}}(\zeta_{j})|^{3} \right)\\
    &=: G(a)+I'+II'+III'. 
\end{align*}

Let $\eta_{j} =  \varepsilon \omega_{j}\cdot \xi_{j}1_{\|\xi_{j}\|\geq \varepsilon^{-1/2}}$. We have 
\begin{align*}
    &(T_{r_{j}}\zeta_{j})(\xi_{j})=\varepsilon r_{j} \omega_{j}\cdot \xi_{j}-T_{r_j}(\eta_{j}).
\end{align*}
By contractivity we have that for any $p\geq 1$, $\|T_{r_j}(\eta_{j})\|_{p}\leq \|\eta_{j}\|_{p} \leq C_{1}(p) e^{-C_{2}(p)/\varepsilon}$ for some $C_{1}(p), C_{2}(p)>0$. Therefore, similarly as before, one can see that 
\begin{align*}
    &\mathbb{E} T_{r_{j}} \zeta_{j} = O(\varepsilon^{3}),\\
    &\mathbb{E}T_{r_{i}}(\zeta_{i}) T_{r_{j}}(\zeta_{j})=\varepsilon^{2} r_{i}r_{j}\omega_{i}^{T}A_{i}A_{j}^{T}\omega_{j}+O(\varepsilon^{3}),\\
    &\mathbb{E}|T_{r_{j}}(\zeta_{j})|^{3} = O(\varepsilon^{3}).
\end{align*}
Thus we obtain 
\begin{align*}
&\mathbb{E}G(a_{1}+T_{r_{1}}(\zeta_{1}),\ldots,a_{n}+T_{r_{n}}(\zeta_{n})) = \\
&\quad= G(a)+\frac{\varepsilon^{2}}{2}\sum_{i,j=1}^{n} G_{ij}(a)r_{i}r_{j}\omega_{i}^{T}A_{i}A_{j}^{T}\omega_{j} + O(\varepsilon^{3}).   
\end{align*}
In particular, the real global inequality \eqref{FBreal}, implies that 
\begin{align*}
    &G(a)+\frac{\varepsilon^{2}}{2}\sum_{i,j=1}^{n} G_{ij}(a)r_{i}r_{j}\omega_{i}^{T}A_{i}A_{j}^{T}\omega_{j} + O(\varepsilon^{3}) \\
    &\quad \geq F(B(a))+F'(B(a))\frac{\varepsilon^{2}}{2}\sum_{i,j=1}^{n}B_{ij}(a)\, \omega_{i}^{T}A_{i}A_{j}^{T}\omega_{j}+O(\varepsilon^{3}).
\end{align*}
Recall that $G(a)=F(B(a))$. So canceling the constant terms, dividing the both sides of the inequality by $\varepsilon^2$ and taking $\varepsilon \to 0$ we obtain
\begin{align*}
    \sum_{i,j=1}^{n} G_{ij}(a)r_{i}r_{j}\omega_{i}^{T}A_{i}A_{j}^{T}\omega_{j} \geq F'(B(a))\sum_{i,j=1}^{n}B_{ij}(a)\, \omega_{i}^{T}A_{i}A_{j}^{T}\omega_{j}
\end{align*}
which is the same as the local conditoin \eqref{posDefi} in the Proposition \ref{MFstatement}.

		\vspace{0.2cm}
\eqref{Local}$\implies$\eqref{monotonicity}: 
Without loss of generality we can assume that $f_p$ are smooth functions, otherwise replace $f_p$ by $T_{\delta}f_p$ and then let $\delta$ goes to one in \eqref{CinRealcase}. Recall, the functions $g_p=g_{p}(u,x,s)$, $p=1,\ldots,n$ in \eqref{gforMeasurebla} are given by
\begin{equation}\label{recallg_p}
g_p=g_p(u,x,s)=\int_{\mathbb{R}^{k_p}} f_p(A_p u + r_p A_p x + \sqrt{1 - r_p^2} y) \, d\gamma^{(1-s)}(y).
\end{equation}
We define the following two quantities 
\begin{align*}
	{\bf{c}}&=(g_1,\ldots,g_n)\in[0,1]^n,\\
	{\bf{a}}&=Q_{s}^{u}B({\bf{c}})\in[0,1].
\end{align*}
With this notation, the function given in \eqref{CinRealcase} is written as $C(s)=Q_{1-s}^xF(Q_{s}^{u}B({\bf{c}}))$, and thus by product rule we obtain
\begin{align*}
	C'(s)&=\left(\frac{\mathrm{d}}{\mathrm{d} s}Q_{1-s}^x\right)F(Q_{s}^{u}B({\bf{c}}))\\
	&\quad+Q_{1-s}^xF'(Q_{s}^{u}B({\bf{c}}))\left(\frac{\mathrm{d}}{\mathrm{d} s}Q_s^u\right)B({\bf{c}}) \\
	&\quad+Q_{1-s}^xF'(Q_{s}^{u}B({\bf{c}})) Q_s^u\frac{\mathrm{d}}{\mathrm{d} s} B({\bf{c}}).
\end{align*}
In turn, by the heat equation we have
\begin{equation*}
	C'(s)=\frac{1}{2}Q_{1-s}^x\left(-\Delta_xF({\bf{a}})+F'({\bf{a}})Q_s^u\Delta_uB({\bf{c}}) +2F'({\bf{a}}) Q_s^u\frac{\mathrm{d}}{\mathrm{d} s} B({\bf{c}})\right).
\end{equation*}
Next, define $\mathcal{L}_j$ the quantity given by
\begin{equation}\label{derC}
	C'(s)=\frac{1}{2}Q_{1-s}^x\left(\sum_{j=1}^k\mathcal{L}_j\right),
\end{equation}
where
\begin{align}\label{L_j}
	\mathcal{L}_j=-	\frac{\partial^2}{\partial x_j^2}F({\bf{a}})+  F'({\bf{a}}) Q_s^u \frac{\partial^2}{\partial u_j^2}B({\bf{c}})+\frac{2}{k}F'({\bf{a}})Q_s^u\frac{\mathrm{d}}{\mathrm{d}s}B({\bf{c}}).
\end{align}
The rest of the proof shows $C'(s)\leq 0$ in three steps.
\vspace{0.2cm}

{\textbf{Step 1:}} Compute the derivatives in \eqref{L_j} and express $\mathcal{L}_j$ in terms of partial derivatives of  $g_p$:

\vspace{0.2cm}

For each $j=1,\ldots,n$, we see that
\begin{equation*}
	\frac{\partial}{\partial x_j}F({\bf{a}})=F'({\bf{a}})	\frac{\partial}{\partial x_j}(Q_s^uB({\bf{c}}))=F'({\bf{a}})\left(Q_s^u\sum_{p=1}^nB_{p}({\bf{c}})(g_p)_{x_j}\right),
\end{equation*}
and so
\begin{align*}
	\frac{\partial^2}{\partial x_j^2}F({\bf{a}})&=F''({\bf{a}})\left(Q_s^u\sum_{p=1}^nB_{p}({\bf{c}})(g_p)_{x_j}\right)^2\\
	&\quad+F'({\bf{a}})Q_s^u\left(\sum_{p,q=1}^{n}B_{pq}({\bf{c}})(g_p)_{x_j}(g_q)_{x_j} \right)\\
	&\quad+F'({\bf{a}})Q_s^u\left( \sum_{p=1}^nB_{p}({\bf{c}})(g_p)_{x_jx_j}\right).
\end{align*}
On the other hand, for each $j=1,\ldots,n$ we have
\begin{align*}
	\frac{\partial^2}{\partial u_j^2}B({\bf{c}})&=\frac{\partial}{\partial u_j}\left(\sum_{p=1}^nB_{p}({\bf{c}})(g_p)_{u_j}\right)\\
	&=\sum_{p,q=1}^{n}B_{pq}({\bf{c}})(g_p)_{u_j}(g_q)_{u_j}+\sum_{p=1}^nB_{p}({\bf{c}})(g_p)_{u_ju_j}.
\end{align*}
Next, denote $f_{p,b}$ and $f_{p,bb}$ the first and second order partial derivatives of \(f_p\) with respect to the $b$-variable, $1 \leq b \leq k_p$. Define,
\begin{align*}
g_{p,b}= \int_{\mathbb{R}^{k_p}} f_{p,b}(A_p u + r_p A_p x + \sqrt{1 - r_p^2} y) \, d\gamma^{(1-s)}(y)\\
g_{p,bb}= \int_{\mathbb{R}^{k_p}} f_{p,bb}(A_p u + r_p A_p x + \sqrt{1 - r_p^2} y) \, d\gamma^{(1-s)}(y).
\end{align*}
By the heat equation we have
\begin{align*}
	(g_p)_s	=-\frac{1}{2}Q_{1-s}^y (\Delta_y f_p(E))=-\frac{1}{2}\sum_{b=1}^{k_p}(1-r_p^2)g_{p,bb}
\end{align*}
where $E=A_pu+r_pA_px+\sqrt{1-r_p^2}y$. Therefore,
\begin{align*}
	\frac{d}{ds}B({\bf{c}})=\sum_{p=1}^nB_{p}({\bf{c}})\left(g_p\right)_s=-\frac{1}{2}\sum_{p=1}^nB_{p}({\bf{c}})(1-r_p^2)\sum_{ b=1}^{k_p}g_{p,bb}.
\end{align*}
Hence, the quantity in \eqref{L_j}, rewrites in terms of partial derivatives of $g_p$ as
\begin{align}\label{L_jwithg_p}
	\mathcal{L}_j=&-F''({\bf{a}})\left[Q_s^u\sum_{p=1}^nB_{p}({\bf{c}})(g_p)_{x_j}\right]^2\\   
	&\nonumber\quad+F'({\bf{a}})Q_s^u\left(\sum_{p,q=1}^{n}B_{pq}({\bf{c}})\Big[(g_p)_{u_j}(g_q)_{u_j}-(g_p)_{x_j}(g_q)_{x_j}\Big] \right)\\
	&\nonumber\quad+F'({\bf{a}})Q_s^u\left(\sum_{p=1}^nB_{p}({\bf{c}})\Big[(g_p)_{u_ju_j}-(g_p)_{x_jx_j}-\frac{1-r_p^2}{k}\sum_{ b=1}^{k_p}g_{p,bb}\Big]\right).
\end{align}

\vspace{0.2cm}

{\textbf{Step 2:}} Concavity assumption \eqref{lemmaConvF1} moves $Q^u_s$ from inside the parentheses in \eqref{derC} to outside:

\vspace{0.2cm}

Let us assume $F'(t)>0$ on $[0,1]$. Recall ${\bf{a}}=Q_{s}^{u}B({\bf{c}})$. We define a new quantity $\Psi_j$ by removing from the expression $\mathcal{L}_j/F'({\bf{a}})$ the $Q^u_s$,
\begin{align}\label{Psi_j}
	\Psi_j=&-\frac{F''({B(\bf{c}}))}{F'(B({\bf{c}}))}\left[\sum_{p=1}^nB_{p}({\bf{c}})(g_p)_{x_j}\right]^2\\   
	&\quad\nonumber+\left(\sum_{p,q=1}^{n}B_{pq}({\bf{c}})\Big[(g_p)_{u_j}(g_q)_{u_j}-(g_p)_{x_j}(g_q)_{x_j}\Big] \right)\\
	&\quad\nonumber+\left(\sum_{p=1}^nB_{p}({\bf{c}})\Big[(g_p)_{u_ju_j}-(g_p)_{x_jx_j}-\frac{1-r_p^2}{k}\sum_{b=1}^{k_p}g_{p,bb}\Big]\right).
\end{align}
By assumption, the function $(t,y)\mapsto \frac{F''(t)}{F'(t)}y^2$ is concave on $[0,1]\times \mathbb{R}$, and so by Jensen's inequality we have that, for any bounded measurable $Y:\R^k\to\R$ it holds
\begin{align}\label{Jensen}
\nonumber- \frac{F''({\bf{a}})}{F'({\bf{a}})}(Q_s^uY)^2 &=-\frac{F''(Q_{s}^{u}B({\bf{c}}))}{F'(Q_{s}^{u}B({\bf{c}}))}(Q_s^uY)^2  \\
	&\leq- Q_{s}^{u}\left( \frac{F''(B({\bf{c}}))}{F'(B({\bf{c}}))}Y^2\right).
\end{align}
Hence, dividing \eqref{L_jwithg_p} by $F'({\bf{a}})$ and then using the last inequality for $Y=\sum_{p=1}^nB_{p}({\bf{c}})(g_p)_{x_j}$ we obtain
\begin{equation}\label{afterJensen}
	\frac{\mathcal{L}_j}{F'({\bf{a}})}\leq Q_s^u\Psi_j.
\end{equation}
Therefore, applying the last inequality to \eqref{derC} yields
\begin{equation}\label{CwithPsi}
\frac{C'(s)}{F'({\bf{a}})}\leq\frac{1}{2}Q_{1-s}^xQ_s^u\left(\sum_{j=1}^k\Psi_j\right).
\end{equation}
Observe that when $F'(t)<0$ on $(0,1)$ the inequalities in \eqref{Jensen}, \eqref{afterJensen} are reversed and so in \eqref{CwithPsi} as well. Consequently $C'(s)\leq0$ follows from the following   
\vspace{0.2cm}

{\textbf{Step 3:}} The quantity $\sum_{j=1}^k\Psi_j$ is nonpositive  when $F'(t)>0$ and nonnegtive when $F'(t)<0$.

\vspace{0.2cm}

Set 
\begin{align*}
	\mathcal{Q}_1&:=\sum_{j=1}^k\left[\sum_{p=1}^n B_{p}({\bf{c}})(g_p)_{x_j}\right]^2\\
	\mathcal{Q}_2&:=\sum_{j=1}^k\left[(g_p)_{u_j}(g_q)_{u_j}-(g_p)_{x_j}(g_q)_{x_j}\right]\\
	\mathcal{Q}_3&:= \sum_{j=1}^k\Big[(g_p)_{u_ju_j}-(g_p)_{x_jx_j}-\frac{1-r_p^2}{k}\sum_{b=1}^{k_p}g_{p,bb}\Big].
\end{align*}
Rewriting \eqref{Psi_j} we get
\begin{equation}\label{sumPsi}
	\sum_{j=1}^k\Psi_j=-\frac{F''(B({\bf{c}}))}{F'(B({\bf{c}}))}\mathcal{Q}_1+\sum_{p,q=1}^{n}B_{pq}({\bf{c}})\mathcal{Q}_2+\sum_{p=1}^nB_p({\bf{c}})\mathcal{Q}_3.
\end{equation}
Recall, 
\begin{equation}\label{g_p}
	g_p = \int_{\mathbb{R}^{k_p}} f_p(A_p u + r_p A_p x + \sqrt{1 - r_p^2} y) \, d\gamma^{(1-s)}(y).
\end{equation}
Let $ e_1, \ldots, e_k $ be an orthonormal basis in $ \mathbb{R}^k $. Let $A_{p,b}$ be the $b$'th row of $A_p$, $1\leq b\leq k_p$. The partial derivatives of $g_p$ with respect to $u_j$ and $x_j$, $j=1,\ldots,n$ are
\begin{align*}		(g_p)_{u_j}&=\sum_{b=1}^{k_p}g_{p,b}(A_{p,b}\cdot e_j)\\
	(g_p)_{x_j}&=r_p \sum_{b=1}^{k_p}g_{p,b}(A_{p,b}\cdot e_j)
\end{align*}
and thus, the second order partial derivatives
\begin{align*}
	(g_p)_{u_ju_j}&=\sum_{b_1, b_2=1}^{k_p}g_{p,b_1b_2}(A_{p,b_1}\cdot e_j)(A_{p,b_2}\cdot e_j)\\
	(g_p)_{x_jx_j}&=r_p^2\sum_{b_1, b_2=1}^{k_p}g_{p,b_1b_2}(A_{p,b_1}\cdot e_j)(A_{p,b_2}\cdot e_j).
\end{align*}
From the above we have
\begin{align*}
\mathcal{Q}_1&=\sum_{j=1}^k\sum_{p,q=1}^{n}\left[B_{p}({\bf{c}})(g_p)_{x_j} \right] \left[B_{q}({\bf{c}})(g_q)_{x_j} \right]\\
  &=\sum_{j=1}^k\sum_{p,q=1}^{n}r_p r_q  B_{p}({\bf{c}}) B_{q}({\bf{c}})\sum_{\substack{1\leq b_1\leq k_p \\ 1\leq b_2\leq k_q}} g_{p,b_1} g_{q,b_2}(A_{p,b_1}\cdot e_j) (A_{q,b_2}\cdot e_j)\\
  &=\sum_{p,q=1}^{n} r_p r_q  B_{p}({\bf{c}}) B_{q}({\bf{c}})\sum_{\substack{1\leq b_1\leq k_p \\ 1\leq b_2\leq k_q}}  g_{p,b_1} A_{p,b_1}A_{q,b_2}^T g_{q,b_2}.
\end{align*}
Next, we notice that for $1\leq p,q\leq n$ the expression 
\begin{equation*}
\sum_{\substack{1\leq b_1\leq k_p \\ 1\leq b_2\leq k_q}} g_{p,b_1} A_{p,b_1}A_{q,b_2}^T g_{q,b_2}
\end{equation*}
can be written as a matrix multiplication of the form
\[
\begin{bmatrix} g_{p,1} & \cdots & g_{p,k_p}
\end{bmatrix} 
\begin{bmatrix}
\ldots & A_{p,1}&\ldots  \\
       & \vdots &  \\
	\ldots  &A_{p,k_p} & \ldots 
\end{bmatrix}
\begin{bmatrix}
\vdots&   &  \vdots\\
A_{q,1} & \dots & 	A_{q,k_q} \\
\vdots &  & \vdots 
\end{bmatrix}
\begin{bmatrix} g_{q,1} \\ \vdots \\ g_{q,k_q}
\end{bmatrix}
\]
Therefore, setting ${\bf{x}_p}^{T}=(g_{p,1},\ldots,g_{p,k_p})$ where $p=1,\ldots,n$, we can write 
\begin{align*}
\mathcal{Q}_1=\sum_{p,q=1}^{n} r_p r_q  B_{p}({\bf{c}}) B_{q}({\bf{c}})( {\bf{x}_p}^TA_{p}A_{q}^T {\bf{x}_q}).
\end{align*}
Next, setting ${\bf{x}}^{T}=({\bf{x}_1},
\ldots,{\bf{x}_n})$, the expression above can be written
\begin{align}\label{1strerm}
	\mathcal{Q}_1={\bf{x}}^T\left\{  r_p r_q  B_{p}({\bf{c}}) B_{q}({\bf{c}}) A_{p}A_{q}^T  \right\}_{p,q=1}^n{\bf{x}}.
\end{align}
In the same way, for any $1\leq p,q\leq n$ we have that
\begin{align*}
\mathcal{Q}_2&=\sum_{\substack{1\leq b_1\leq k_p \\ 1\leq b_2\leq k_q}}  A_{p,b_1}A_{q,b_2}^T   \left[g_{p,b_1}  g_{q,b_2}  -  r_p r_q g_{p,b_1} g_{q,b_2}  \right]\\
	&={\bf{x}_p^T}((1-r_pr_q) A_{p}A_{q}^T){\bf{x}_q},
\end{align*}
and hence
\begin{align}\label{2ndterm}
\sum_{p,q=1}^{n}B_{pq}({\bf{c}})\mathcal{Q}_2={\bf{x}}^T\left\{\left( 1-r_pr_q\right)B_{pq}({\bf{c}})A_pA_q^T  \right\}_{p,q=1}^n{\bf{x}}.
\end{align}
Last, using that $A_pA^T_p=\mathrm{Id}$ we obtain
\begin{align}\label{3rdterm}
	\mathcal{Q}_3&= \sum_{j=1}^k\Big[(g_p)_{u_ju_j}-(g_p)_{x_jx_j}-\frac{1-r_p^2}{k}\sum_{b=1}^{k_p}g_{p,bb}\Big]\\
	&\nonumber=(1-r_p^2)\sum_{b_1, b_2=1}^{k_p} A_{p,b_1}A_{p,b_2}^T g_{p,b_1b_2}-(1-r_p^2)\sum_{b=1}^{k_p}g_{p,bb}\\
	&\nonumber=0.
\end{align}
Therefore, by substituting the quantities \eqref{1strerm}, \eqref{2ndterm} and \eqref{3rdterm} to \eqref{sumPsi} we have
\begin{equation*}
\sum_{j=1}^k\Psi_j={\bf{x}}^T\left\{\left((1-r_pr_q)B_{pq}({\bf{c}})-r_pr_q B_{p}({\bf{c}})B_{q}({\bf{c}}) \frac{F''(B({\bf{c}}))}{F'(B({\bf{c}}))}\right)A_pA_q^T  \right\}_{p,q=1}^n{\bf{x}}.
\end{equation*}
Thus, by \eqref{posDefi}, the above quantity is nonpositive when $F'(t) > 0$ and nonnegative when $F'(t) < 0$, completing the proof of \eqref{Local}$\implies$\eqref{monotonicity}.
		
		\vspace{0.2cm}
		
		\eqref{monotonicity}$\implies$\eqref{Global}:  	By a change of variable we have
		\begin{align*}
g_p(u,x,s)&=\int_{\mathbb{R}^{k_p}} f_p(A_p u + r_p A_p x + \sqrt{1 - r_p^2} y) \, d\gamma^{(1-s)}(y)\\
&=\int_{\R^{k_p}}f_p(A_pu+r_pA_px+\sqrt{(1-s)(1-r_p^2)}y)\,d\gamma (y),
		\end{align*}
and hence  
		\begin{align*}
			g_p(0,x,0)&=T_{r_p}f_p(A_pu)\\
			g_p(u,0,1)&=f_p(A_pu).
		\end{align*}
Again, we change the variable and write
		\begin{align*}
			C(s)&=Q_{1-s}^x F\big(Q_s^uB\big(g_1(u,x,s),\ldots,g_n(u,x,s)\big)\big)\\
			&=\int_{\R^k}F\left(\int_{\R^k}B(g_1(\sqrt{s}u,\sqrt{1-s}x,s),\ldots,g_n(\sqrt{s}u,\sqrt{1-s}x,s))\,d\gamma(u)\right)\,d\gamma(x).
		\end{align*}
		So,
		\begin{align*}
			C(0)&=\int_{\R^k}F\left(\int_{\R^k}B(g_1(0,x,0),\ldots,g_n(0,x,0))\,d\gamma(u)\right)\,d\gamma(x)\\
			&=\int_{\R^k}F\circ B(T_{r_1}f_1(A_1x),\ldots,T_{r_n}f_n(A_nx))\,d\gamma(x)
		\end{align*}
		and
		\begin{align*}
			C(1)&=\int_{\R^k}F\left(\int_{\R^k}B(g_1(u,0,1),\ldots,g_n(u,0,1))\,d\gamma(u)\right)\,d\gamma(x)\\
			&=F\left(\int_{\R^k}B(f_1(A_1u),\ldots,f_n(A_nu))\,d\gamma(u)\right).
		\end{align*}
		Since $C$ is nonincreasing, we have $C(0) \geq C(1)$, which completes the proof.

	\end{proof}

Proposition \ref{MFstatement} implies Theorem \ref{FBrealTheorem}. The local condition \eqref{Nlocal} remains invariant under the affine change of variables $B(\cdot,\ldots,\cdot)\mapsto B(\lambda_1\cdot+c_1,\ldots,\lambda_n\cdot+c_n)$ and also by scaling $B(\cdot)\mapsto \lambda B(\cdot)$. Therefore, without loss of generality, in Theorem \ref{FBrealTheorem}, we may assume $J = [0,1]$ and $C = [0,1]^n$. Hence, equivalence between \eqref{Global} and \eqref{monotonicity} in Proposition \ref{MFstatement} is the same as Theorem \ref{FBrealTheorem}.

\section{\textbf{Proof of Theorems~\ref{MRH}--\ref{chaoses2}, and \ref{Borelnoise}}}\label{proofsFROMreal}

\begin{proof}[Proof of Theorem~\ref{MRH}]

The result follows from Theorem~\ref{FBrealTheorem} by setting $F(t)=t^{\alpha}$ and choosing $B(t_1,
\ldots,t_n)=t_1^{p_1}\cdots t_n^{p_n}$. We remark that these functions are not smooth at the boundary points i.e., when $t=0$, or $t_{j}=0$ for some $j$. The simple and standard remedy to fix this issue (which also will be used in the proofs of other theorems in this paper)  is to  first obtain the inequality in Theorem~\ref{MRH} for test functions $f_{j}+\delta$ for any $\delta>0$, and then pass to the limit $\delta \to 0$. We could further restrict ourselves to bounded test functions because the general case can be obtained by replacing $f_{j}$ via $\min \{f_{j}, N\}$ and then letting $N \to \infty$.

For these functions, we now aim to simplify the expression
\begin{align}
\mathcal{L}_{ij}({\bf{t}}):=\left((1-r_ir_j)B_{ij}({\bf{t}})-r_ir_j\frac{F''(B({\bf{t}}))}{F'(B(\bf{t}))}B_{i}({\bf{t}})B_{j}({\bf{t}}) \right)
\end{align}
where ${\bf{t}}\in (0, \infty)^{n}$. One can verify the $i$-th and $ij$-th partial derivatives equal to 
\begin{align*}
B_i({\bf{t}})&=p_i\frac{B({\bf{t}})}{t_i},\\
B_{ij}({\bf{t}})&=(p_ip_j-\delta_{ij}p_i)\frac{B({\bf{t}})}{t_it_j},
\end{align*}
where $\delta_{ij}=1$ if $i=j$ and $\delta_{ij}=0$ otherwise. 
Consequently, we obtain
\begin{align*}
\mathcal{L}_{ij}({\bf{t}})&=\frac{B({\bf{t}})}{t_it_j}\left((1-r_ir_j)(p_ip_j-\delta_{ij}p_i)-r_ir_j(\alpha-1)p_ip_j\right)\\
&=\frac{B({\bf{t}})p_ip_j}{t_it_j}\left(\delta_{ij}(\frac{r_ir_j-1}{p_j})+(1-\alpha r_ir_j)\right).
\end{align*}
By Theorem~\ref{FBrealTheorem},  the inequality 
 \begin{align}\label{multipultir11}
 \| \prod_{j=1}^{n} (T_{r_{j}}f_{j}(\xi_{j}))^{p_{j}}\|_{\alpha} \leq \|  \prod_{j=1}^{n} f^{p_{j}}_{j}(\xi_{j})\|_{1}
 \end{align}
 holds if and only if 
 \begin{equation}\label{Ltleq0}
   \{\mathcal{L}_{ij}({\bf{t}})\mathrm{cov}(\xi_{i}, \xi_{j})\}_{i,j=1}^n\geq 0 
 \end{equation}
 for all ${\bf{t}}\in (0, \infty)^{n}$ provided that 
\begin{align}\label{convPower}
(t,y) \mapsto \frac{F''(t)}{|F'(t)|}y^{2}=\frac{\alpha(\alpha-1)}{|\alpha|}\frac{y^2}{t}
\end{align}
is convex on $(0,\infty)\times \R$. The first condition implies
 \begin{align}\label{localreal1}
       \mathrm{diag}\left\{ \frac{1-r_{j}^{2}}{p_{j}  } \mathrm{Id}_{k_{j}}\right\}_{j=1}^{n} \leq  \left\{(1-\alpha r_{i}r_{j})\mathrm{cov}(\xi_{i}, \xi_{j})\right\}_{i,j=1}^{n}.
 \end{align}
and convexity (\ref{convPower}) requires $\alpha \in (-\infty,0)\cup [1, \infty)$.

Moreover, the inequality \eqref{multipultir11} reverses if and only the inequality \eqref{localreal1} is reversed provided that $\alpha \in (0,1]$ obtaining the second part of Theorem \ref{MRH}.
\end{proof}

\begin{proof}[Proof of Theorem~\ref{correal}]
Let $p_1=\ldots=p_n=p>0$, $r_1=\ldots=r_n=r$ and $\alpha=q/p\geq1$ in Theorem \ref{MRH}.  The condition \eqref{localreal} takes the form
\begin{equation}
\frac{1 - r^2}{p - q r^2} \mathrm{Id}_{k_{1}+\ldots+k_{n}} \leq \mathrm{cov}(\xi). \end{equation}
This holds if and only if $\frac{1-r^{2}}{p-qr^2 } \leq \lambda_{\min}$ which in turn implies \eqref{loceq}. 
\end{proof}

\begin{proof}[Proof of Theorem~\ref{corlogsob}]
For $r>1$, we consider the function
\begin{equation}
\psi(r)=\log \|\prod_{j=1}^nT_{\varphi(r)}f_j(\xi_j)\|_{r}
\end{equation}
Denote $g_{\varphi(r)}(\xi)=\prod_{j=1}^nT_{\varphi(r)}f_j(\xi_j)$. We compute
\begin{align}\label{derPSI}
\nonumber\psi(r)'&=\left( \frac{1}{r}\log\left(\mathbb{E}g_{\varphi(r)}^{r}\right) \right)'\\
&=-\frac{1}{r^2}\log\left(\mathbb{E} g_{\varphi(r)}^{r}\right)+\frac{1}{r}\frac{\left(\mathbb{E}g_{\varphi(r)}^{r}\right)'}{\mathbb{E}g_{\varphi(r)}^{r}}
\end{align}
Recall that
\begin{equation*}
\frac{\partial}{\partial r}T_{\varphi(r)}f(x)=\frac{\varphi'(r)}{\varphi(r)}L(T_{\varphi(r)}f(x)).
\end{equation*}
Hence,
\begin{align*}
\frac{\partial}{\partial r}g_{\varphi(r)}=\frac{\partial}{\partial r}\prod_{j=1}^nT_{\varphi(r)}f_j&=\sum_{j=1}^{n} \left( \frac{\partial}{\partial t} T_{\varphi(r)} f_j \right) \prod_{\substack{k=1 \\ k \neq j}}^{n} T_{\varphi(r)} f_k\\
&=\frac{1}{\varphi(r)}\prod_{j=1}^nT_{\varphi(r)}f_j\left( \sum_{j=1}^n
\frac{L(T_{\varphi(r)}f_j)}{T_{\varphi(r)}f_j}\right):=Mg_{\varphi(r)}
\end{align*}
Therefore the second term in \eqref{derPSI} takes the form
\begin{align*}
\frac{1}{r}\frac{\left(\mathbb{E} g_{\varphi(r)}^{r}\right)'}{\mathbb{E} g_{\varphi(r)}^{r}}&=\frac{1}{r}\frac{\mathbb{E}g_{\varphi(r)}^{r}(r'\log g_{\varphi(r)}+r\frac{Mg_{\varphi(r)}}{g_{\varphi(r)}})}{\mathbb{E} g_{\varphi(r)}^{r}}\\
&=\frac{1}{r}\frac{\mathbb{E}g_{\varphi(r)}^{r}\log g_{\varphi(r)}}{\mathbb{E} g_{\varphi(r)}^{r}}+\frac{\mathbb{E}g_{\varphi(r)}^{r}\frac{Mg_{\varphi(r)}}{g_{\varphi(r)}}}{\mathbb{E} g_{\varphi(r)}^{r}}\\
&=\frac{1}{r^2} \frac{1}{\mathbb{E} g_{\varphi(r)}^{r}} \left( \mathbb{E}g_{\varphi(r)}^{r}\log g_{\varphi(r)}^{r} + r^2\mathbb{E}g_{\varphi(r)}^{r}\frac{Mg_{\varphi(r)}}{g_{\varphi(r)}}\right)
\end{align*}
Thus
\begin{align*}
    \psi'(r)=\frac{1}{r^2} \frac{1}{\mathbb{E} g_{\varphi(r)}^{r}} \left(\mathbb{E}g_{\varphi(r)}^{r}\log g_{\varphi(r)}^{r} +r^2 \mathbb{E}g_{\varphi(r)}^{r-1}Mg_{\varphi(r)}\right)
\end{align*}
For \( q \geq p \geq 1/\lambda_{\min} \) and \( r = \sqrt{p\lambda_{\min}-1} / \sqrt{q\lambda_{\min}-1} \), we apply Theorem \ref{correal} with \( T_{1/\sqrt{p\lambda_{\min}-1}} f_j \) in place of \( f_j \), 
 and we obtain  
\begin{equation*}
	\left\| \prod_{j=1}^{n} T_{1/\sqrt{q\lambda_{\min}-1}} f_j(\xi_{j}) \right\|_{q}  
	\leq \left\| \prod_{j=1}^{n} T_{1/\sqrt{p\lambda_{\min}-1}} f_j(\xi_{j}) \right\|_{p}.
\end{equation*}
Consequently, for $\varphi(r)=1/\sqrt{r\lambda_{\min}-1}$ the function $\psi(r)$ is nonincreasing. Therefore, the inequality $\psi'(r)\leq 0$ implies
\begin{align*}
 \mathbb{E}g_{\varphi(r)}^{r}\log g_{\varphi(r)}^{r}&\leq -r^2 \mathbb{E}g_{\varphi(r)}^{r-1}Mg_{\varphi(r)}\\
   &=-\frac{r^2\varphi'(r)}{\varphi(r)} \mathbb{E}\prod_{j=1}^nT_{\varphi(r)}f_j^r\left( \sum_{j=1}^n
\frac{L(T_{\varphi(r)}f_j)}{T_{\varphi(r)}f_j}\right)
\end{align*} 
Finally, replacing $T_{\varphi(r)}f_j$ by $f_j$ and setting $r=p\geq 1/\lambda_{\min}$ completes the proof.
\end{proof}

\begin{proof}[Proof of Theorem~\ref{chaoses2}]
  Notice that for $d_j$-homogeneous Gaussian chaoses and $r>0$ we have
\begin{equation*}
\left\|\prod_{j=1}^{n}T_rf_{j}(\xi_{j})\right\|_{q}=\left\|\prod_{j=1}^{n}r^{d_j}f_{j}(\xi_{j})\right\|_{q}=r^{\sum d_j}\left\|\prod_{j=1}^{n}f_{j}(\xi_{j})\right\|_{q}.
\end{equation*}
Applying Theorem \ref{correal}, for $r = \sqrt{\frac{p\lambda_{\min}-1}{q\lambda_{\min}-1}}$ establishes the desired inequality.  
\end{proof}

\begin{proof}[Proof of Theorem~\ref{Borelnoise}]
By Theorem~\ref{NGJT} the inequality 
\begin{align*}
  \mathbb{E} B(f_{1}(X), f_{2}(X_{\rho})) \leq  \mathbb{E} B(T_{r_{1}}f_{1}(X), T_{r_{2}}f_2(X_{\rho}))
\end{align*}
holds for all measurale $f_{1}, f_{2} :\mathbb{R}^{n} \to [0,1]$ if and only if 
\begin{align}\label{maine}
    \begin{pmatrix}
        (1-r_{1}^{2})B_{11} & \rho (1-r_{1}r_{2})B_{12} \\
        \rho (1-r_{1}r_{2})B_{12} & (1-r_{2}^{2})B_{22}
    \end{pmatrix} \leq 0.
\end{align}
 Here $X, X_{\rho}$ are standard $\rho$ correlated Gaussians in $\mathbb{R}^{n}$ so that $(X, X_{\rho})$ is  Gaussian random vector in $\mathbb{R}^{2n}$. 

 Without loss of generality we will assume $r_{1},r_{2} \in (-1,1)$. Pick any $s \in (0,1)$,  and set $\rho = s \frac{\sqrt{(1-r_{1}^{2})(1-r_{2}^{2})}}{1-r_{1}r_{2}}$.  Let $B(u,v)=M(u,v;s)$, where $M(u,v;s)$ is defined as in Section~\ref{noiboy}. The left hand side in (\ref{maine}) takes the form 
 \begin{align}\label{matinesimp}
        \begin{pmatrix}
        (1-r_{1}^{2})M_{11} &  s\sqrt{(1-r_{1}^{2})(1-r_{2}^{2})}M_{12} \\
         s\sqrt{(1-r_{1}^{2})(1-r_{2}^{2})}M_{12} & (1-r_{2}^{2})M_{22}
    \end{pmatrix} = \tilde{D} \begin{pmatrix}
        M_{11} &  sM_{12} \\
         sM_{12} & M_{22}
    \end{pmatrix}\tilde{D},
 \end{align}
 where $\tilde{D}$ it $2\times 2$ diagonal matrix with diagonal entries $\sqrt{1-r_{1}^{2}}$ and $\sqrt{1-r_{2}^{2}}$. The matrix in (\ref{matinesimp}) is negative semidefinite by  (\ref{matin01}). Choosing $f_{1} = {1}_{A}$,  $f_{2}=1_{B}$, and noting that $T_{r_{1}}f_{1}(v) =\mathbb{E} 1_{A}(rv+\sqrt{1-r^{2}}X) = \left| \frac{A-r_{1}v}{\sqrt{1-r_{1}^{2}}}\right|$ and $T_{r_{2}}f_{2}(v) = \left| \frac{B-r_{2}v}{\sqrt{1-r_{2}^{2}}}\right|$ we obtain (\ref{Borelnoiseq}). The case $s \in (-1,0)$ is similar. 

 To see the sharpness of (\ref{Borelnoiseq}) without loss of generality we can assume $n=1$. Let $A = (-\infty, \Phi^{-1}(u))$ and $B = (-\infty, \Phi^{-1}(v))$.  Let $X'$ and $X'_{s}$ be $s$ correlated Gaussians independent from $A$ and $X_{\rho}$. We have 
 \begin{align*}
     \left| \frac{A-r_{1}X}{\sqrt{1-r_{1}^{2}}}\right| = \Phi\left( \frac{\Phi^{-1}(u)-r_{1}X}{\sqrt{1-r_{1}^{2}}}\right) \quad \text{and} \quad 
     \left| \frac{B-r_{2}X_{\rho}}{\sqrt{1-r_{2}^{2}}}\right| = \Phi\left( \frac{\Phi^{-1}(v)-r_{2}X_{\rho}}{\sqrt{1-r_{2}^{2}}}\right).
 \end{align*}

 Therefore 
 \begin{align*}
     &\mathbb{E} M\left(\left|\frac{A-r_{1}X}{\sqrt{1-r_{1}^{2}}}\right|_{\gamma}, \left|\frac{B-r_{2}X_{\rho}}{\sqrt{1-r_{2}^{2}}}\right|_{\gamma}; s\right) = \\
     &\mathbb{E} \, 1_{\left(-\infty, \frac{\Phi^{-1}(u)-r_{1}X}{\sqrt{1-r_{1}^{2}}}\right)}(X') \; 1_{\left(-\infty, \frac{\Phi^{-1}(v)-r_{2}X_{\rho}}{\sqrt{1-r_{2}^{2}}} \right)} (X'_{s}) =\\
     &\mathbb{E} \, 1_{\left(-\infty, \Phi^{-1}(u)\right)}(X'\sqrt{1-r_{1}^{2}}+r_{1}X) \; 1_{\left(-\infty, \Phi^{-1}(v) \right)} (X'_{s}\sqrt{1-r_{2}^{2}}+r_{2}X_{\rho}) = M(u,v; \rho),
 \end{align*}
 where the last inequality follows from the fact that the random variables $X'\sqrt{1-r_{1}^{2}}+r_{1}X$ and $X'_{s}\sqrt{1-r_{2}^{2}}+r_{2}X_{\rho}$ are standard Gaussians with 
 \begin{align*}
     &\mathrm{cov}(X'\sqrt{1-r_{1}^{2}}+r_{1}X, X'_{s}\sqrt{1-r_{2}^{2}}+r_{2}X_{\rho})= \\
     &s \sqrt{1-r_{1}^{2}}\sqrt{1-r_{2}^{2}} + \frac{sr_{1}r_{2}\sqrt{1-r_{1}^{2}}\sqrt{1-r_{2}^{2}}}{1-r_{1}r_{2}} = \rho. 
 \end{align*}
 This finishes the proof of Theorem~\ref{Borelnoise}
\end{proof}

\section{\textbf{Proof of Theorem~\ref{complexCaseTheorem}}}\label{ComplexProof}
Let 
\begin{align*}
(\xi_1,\ldots,\xi_n)\sim (A_1x,\ldots,A_nx),
\end{align*}
where $x$ is the standard normal random vector in $\R^k$. Recall that $A_{j} : \mathbb{R}^{k}\to \mathbb{R}^{k_{j}}$ are matrices such that $A_{j}A^{T}_{j}=\mathrm{Id}_{k_{j}}$ for all $j=1, \ldots, n$.

\begin{proposition}\label{equivalComplex}
Let $F :[0,\infty) \mapsto \mathbb{R}$ and $M: [0,\infty)^{n} \mapsto [0,\infty)$ be two functions that are smooth in the interior of their domains, continuous up to the boundary,  and satisfy polynomial growth conditions. We also assume that $F'>0$ on $(0,\infty)$, the map 
\begin{align}\label{lemmaConvF}
(t,y) \mapsto \frac{F''(t)}{F'(t)}y^{2}
\end{align}
is convex on $(0,\infty)\times \mathbb{R}$, and 
 $M_{m}>0$ on $(0, \infty)^{n}$ for all $m=1, \ldots, n$.    Let $z=(z_{1}, \ldots, z_{n}) \in \mathbb{C}^{n}$ with $|z_{j}|\leq 1$, $j=1, \ldots, n$. The following are equivalent.
\vspace{0.2cm}
\begin{enumerate}
\item \label{Globalcomplex} \textbf{Global:} For all polynomials $f_j:\R^{k_j}\to\C$, $j=1, \ldots, n$,  it holds
\vspace{0.2cm}
\begin{align*}
\int_{\R^k}F\circ M(|T_{z_1}&(f_1(A_1 x))|^2, \ldots, |T_{z_n}(f_n( A_n x))|^2) \,d\gamma(x)\\
&\leq F\left(	\int_{\R^k}M(|f_1(A_1 x)|^2, \ldots, |f_n(A_n x)|^2)\,d\gamma(x)\right).
\end{align*}  
\vspace{0.2cm}
\item \label{Localcomplex}\textbf{Local:}  For all ${\bf{c}}=(c_{1},\ldots, c_{n})\in (0,\infty)^n$ it holds
\vspace{0.2cm}
\begin{align*}
		&-\frac{F''(M({\bf{c}}))}{F'(M({\bf{c}}))}\left(\sum_{m, q=1}^{n} M_{m}({\bf{c}})M_{q}({\bf{c}})  (\Re(z_mw_{m}))^T A_{m} A_{q}^T  \Re(z_qw_{q}) \right)\\
		&+\left(\sum_{m, q=1}^{n}  M_{mq}({\bf{c}}) \left( (\Re(w_{m}))^T A_{m} A_{q}^T  \Re(w_{q})  -  (\Re(z_mw_{m}))^T A_{m} A_{q}^T  \Re(z_qw_{q}) \right) \right)\\
		&+\left(\frac{1}{2}\sum_{m=1}^n\frac{M_{m}({\bf{c}})}{c_m}(1-|z_m|^2) 
		|w_{m}|^2\right)\geq0
	\end{align*}
	for all $w_{m}\in\C^{k_m}$, $1\leq m\leq n$
\vspace{0.2cm}
\item \label{monotonicitycomplex} \textbf{Monotonicity:} Let $\ell_p:\R^{k_p}\to\mathbb{C}$, $p=1,\ldots,n$ be any polynomials. Consider the function 
\begin{equation}\label{gforComplex}
g_p(u,x,s)=Q_{1-s}^yQ_s^v\ell_p((A_p u+iv)+z_p(A_p x+iy)),
\end{equation}
where $u,x\in\R^k$, $s\in[0,1]$. Then the function $C:[0,1]\to \R$ given by 
\begin{align}\label{4Hflows}
C(s)=Q_{1-s}^x F\big(Q_s^uM\big(|g_1(u,x,s)|^2,\ldots,|g_n(u,x,s)|^2)\big)
\end{align}
is nondecreasing. 
\vspace{0.2cm}
\end{enumerate} 
\end{proposition}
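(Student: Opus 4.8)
The plan is to prove the cycle $\eqref{Globalcomplex}\implies\eqref{Localcomplex}\implies\eqref{monotonicitycomplex}\implies\eqref{Globalcomplex}$, following the scheme of Proposition~\ref{MFstatement} but now carrying \emph{four} heat flows: $Q_{1-s}^x,Q_s^u$ acting on the outer $F$ and $M$, and $Q_{1-s}^y,Q_s^v$ acting inside each $g_p$, the extra variable $v$ supplying the imaginary direction that complex $z_p$ requires. Throughout I use the realization $\xi_j=A_jx$, so that $\mathrm{cov}(\xi_p,\xi_q)=A_pA_q^T$. Three elementary facts about $g_p$ from \eqref{gforComplex} will be used repeatedly: it is well defined because $\ell_p$ is a polynomial (hence entire); since the argument $w=(A_pu+iv)+z_p(A_px+iy)$ satisfies $\partial_{x_j}w=z_p\,\partial_{u_j}w$, one has $\partial_{x_j}g_p=z_p\,\partial_{u_j}g_p$; and, writing $g_{p,b}=Q_{1-s}^yQ_s^v[(\partial_b\ell_p)(w)]$ and $g_{p,bb}$ analogously, the combined $(y,v)$-heat equation yields $\partial_s g_p=\tfrac{z_p^2-1}{2}\sum_b g_{p,bb}$, the exact analogue of the real identity $(g_p)_s=-\tfrac12(1-r_p^2)\sum_b g_{p,bb}$. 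A preliminary reduction (replace $\ell_p$ by $T_\delta\ell_p$, let $\delta\to1$, and truncate where $|g_p|$ is small), together with the polynomial growth hypotheses on $F$ and $M$, makes all differentiations under the flows and interchanges of limits legitimate.

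For $\eqref{Globalcomplex}\implies\eqref{Localcomplex}$ I would fix $\mathbf c\in(0,\infty)^n$ and $w_m\in\mathbb C^{k_m}$ and test \eqref{Globalcomplex} against the degree-one polynomials $f_m(\xi_m)=\sqrt{c_m}+\tfrac{\varepsilon}{\sqrt{c_m}}\,w_m\cdot\xi_m$. As $T_{z_m}$ multiplies the linear part by $z_m$, one gets $|T_{z_m}f_m(\xi_m)|^2=c_m+2\varepsilon\,\Re(z_mw_m)\cdot\xi_m+\tfrac{\varepsilon^2}{c_m}|z_mw_m\cdot\xi_m|^2$ and $|f_m(\xi_m)|^2=c_m+2\varepsilon\,\Re(w_m)\cdot\xi_m+\tfrac{\varepsilon^2}{c_m}|w_m\cdot\xi_m|^2$. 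Expanding $F\circ M$ to second order, taking expectations (the linear-in-$\xi$ terms drop, while $\mathbb E[(\Re a\cdot\xi_p)(\Re b\cdot\xi_q)]=(\Re a)^TA_pA_q^T(\Re b)$ and $\mathbb E|w_m\cdot\xi_m|^2=|w_m|^2$), subtracting the two sides, dividing by $\varepsilon^2 F'(M(\mathbf c))>0$ and letting $\varepsilon\to0$ produces exactly \eqref{Localcomplex}. No truncation of $\xi_m$ is needed here because $f_m$ is already a polynomial.

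The heart is $\eqref{Localcomplex}\implies\eqref{monotonicitycomplex}$, run in the three steps of Proposition~\ref{MFstatement} but with all inequalities reversed, because \eqref{Globalcomplex} is an upper bound and because the convexity of $(t,y)\mapsto \frac{F''(t)}{F'(t)}y^2$ in \eqref{lemmaConvF} points the opposite way to the concavity used there. \textbf{Step~1:} differentiating $C(s)$ from \eqref{4Hflows} and applying $\tfrac{d}{ds}Q_{1-s}^x=-\tfrac12 Q_{1-s}^x\Delta_x$, $\tfrac{d}{ds}Q_s^u=\tfrac12 Q_s^u\Delta_u$ and $\partial_s g_p=\tfrac{z_p^2-1}{2}\sum_b g_{p,bb}$, one writes $C'(s)=\tfrac12 Q_{1-s}^x\big(\sum_{j=1}^k\mathcal L_j\big)$ with $\mathcal L_j$ expressed through the first and second $u$- and $x$-derivatives of the $g_p$ (using $\partial_{x_j}g_p=z_p\partial_{u_j}g_p$). \textbf{Step~2:} the convexity hypothesis and Jensen applied to the average $Q_s^u$ give $-\frac{F''(Q_s^uM(\mathbf c))}{F'(Q_s^uM(\mathbf c))}(Q_s^uY)^2\ge -Q_s^u\!\big(\frac{F''(M(\mathbf c))}{F'(M(\mathbf c))}Y^2\big)$, hence $\mathcal L_j/F'(\mathbf a)\ge Q_s^u\Psi_j$ and $C'(s)/F'(\mathbf a)\ge\tfrac12 Q_{1-s}^xQ_s^u\big(\sum_j\Psi_j\big)$, where $\Psi_j$ comes from $\mathcal L_j/F'$ by evaluating $F''/F'$ at $M(\mathbf c)$ rather than at $\mathbf a$; since $F'>0$ it remains to show $\sum_j\Psi_j\ge0$. \textbf{Step~3:} substituting $\partial_{u_j}g_p=\sum_b g_{p,b}(A_{p,b}\cdot e_j)$, $\partial_{u_j}|g_p|^2=2\Re(\overline{g_p}\,\partial_{u_j}g_p)$ and the analogous second-order identities, using $A_pA_p^T=\mathrm{Id}$, and setting $w_p:=\overline{g_p}\,(g_{p,1},\dots,g_{p,k_p})\in\mathbb C^{k_p}$ (so $\mathbf c=(|g_1|^2,\dots,|g_n|^2)$ and $|w_m|^2=c_m\sum_b|g_{m,b}|^2$), one checks that $\sum_j\Psi_j$ equals precisely the left-hand side of \eqref{Localcomplex} for this $w$; in particular the genuinely new third term $\tfrac12\sum_m\frac{M_m}{c_m}(1-|z_m|^2)|w_m|^2$ arises by combining the $\partial^2_{u_j}|g_p|^2-\partial^2_{x_j}|g_p|^2$ contribution with the $\partial_s|g_p|^2$ term, and this is where $|z_m|\le1$ is used. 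Hence \eqref{Localcomplex} forces $\sum_j\Psi_j\ge0$, i.e. $C'(s)\ge0$.

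Finally, $\eqref{monotonicitycomplex}\implies\eqref{Globalcomplex}$ is immediate: after the change of variables $u\mapsto\sqrt s\,u$, $x\mapsto\sqrt{1-s}\,x$ (and the matching rescalings inside $g_p$), the flow structure gives $g_p|_{s=0}\leftrightarrow T_{z_p}\ell_p$ and $g_p|_{s=1}\leftrightarrow\ell_p$, so $C(0)=\int_{\R^k}F\circ M(|T_{z_1}f_1(A_1x)|^2,\dots)\,d\gamma(x)$ and $C(1)=F\big(\int_{\R^k}M(|f_1(A_1x)|^2,\dots)\,d\gamma(x)\big)$, and $C$ nondecreasing yields $C(0)\le C(1)$, which is \eqref{Globalcomplex}. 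I expect the main obstacle to be the bookkeeping in Step~3: one must track the four flows simultaneously, exploit that $g_p$ is holomorphic in its argument so that $x$- and $y$-derivatives collapse to $z_p$ times $u$- and $v$-derivatives, and reassemble the pieces—recovering in particular the $(1-|z_p|^2)$ curvature term, which has no analogue in the real computation where it vanished—into exactly the form of \eqref{Localcomplex}. The Jensen step is also more delicate than in Proposition~\ref{MFstatement}, since the convexity-versus-concavity choice and the direction of the global inequality must be matched, which is why $F'>0$ and the convexity of $\frac{F''}{F'}y^2$ are imposed.
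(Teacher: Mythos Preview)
Your proposal is correct and follows essentially the same three-step cycle as the paper, with the right identifications ($\partial_{x_j}g_p=z_p\partial_{u_j}g_p$, $\partial_s g_p=\tfrac{z_p^2-1}{2}\sum_b g_{p,bb}$, the Jensen step via convexity of $\tfrac{F''}{F'}y^2$, and the substitution $w_p=\overline{g_p}(g_{p,b})_b$). One point you gloss over and should make explicit in $\eqref{monotonicitycomplex}\Rightarrow\eqref{Globalcomplex}$: the endpoints of $C$ do \emph{not} recover $T_{z_p}\ell_p$ and $\ell_p$. Given $f_p=\sum_\alpha c_\alpha H_\alpha$ from \eqref{Globalcomplex}, one must choose $\ell_p=\sum_\alpha c_\alpha x^\alpha$ (monomial basis); the integral representation $H_\alpha(x)=\mathbb E(x+i\xi)^\alpha$ then gives $\int\ell_p(A_pu+iv)\,d\gamma(v)=f_p(A_pu)$ and, at $s=0$, $\int\ell_p(z_p(A_px+iy))\,d\gamma(y)=T_{z_p}f_p(A_px)$. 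This Hermite--monomial link is precisely why the auxiliary imaginary directions $v,y$ are introduced, and the paper spells it out. A smaller technical remark: the paper handles the boundary issue (where some $|g_p|^2=0$ and $M,F',F''$ may be undefined) not by a $T_\delta$-smoothing of $\ell_p$ but by translating $M$ to $\tilde M(t)=M(t+\varepsilon)$, checking that \eqref{Localcomplex} is inherited by $\tilde M$ (this step uses $|z_m|\le1$ so that the third term keeps its sign), and then letting $\varepsilon\to0$ via dominated convergence and the polynomial growth hypothesis.
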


\begin{proof}
\eqref{Globalcomplex}$\implies$\eqref{Localcomplex}:		Consider  $f_j:\R^{k_j}\to\C$, $j=1,\ldots,n$, 
		\begin{equation*}
			f_j(x_j) = a_j+\varepsilon (\eta_j\cdot x_j),
		\end{equation*}
		where $a_j\in \mathbb{C}$ are nonzero, $\eta_j\in\C^{k_j}$ and  $\varepsilon >0$. Notice that   $T_{z_j}f_j:\R^{k_j}\to\C$ are given by
		\begin{equation*}
			T_{z_j} f_j(x_j)= a_j+\varepsilon z_j(\eta_j\cdot x_j).
		\end{equation*}
		Set $J(x):=F(M(x))$, $x\in(0,\infty)^n$ and also $g_j(x):=T_{z_j} f_j(A_jx)$, $x\in\R^k$. By the growth condition on $F$ and $M$ we have  that
		\begin{equation*}
			\int_{\R^n}J(|g_1|^2,\ldots,|g_n|^2)^2\,d\gamma(x)\leq C,
		\end{equation*}
		and thus by Cauchy--Schwarz inequality we see that
		\begin{align*}
			\int_{| x|> \varepsilon^{-\frac{1}{2}}}J(|g_1|^{2},\ldots,|g_n|^2)\,d\gamma(x)=\mathcal{O}(e^{-\frac{1}{100\varepsilon}}),
		\end{align*}
		and hence 
		\begin{align*}
			\int_{\R^k}J(|g_1|^{2},\ldots,|g_n|^2)\,d\gamma(x)=\int_{| x|\leq \varepsilon^{-\frac{1}{2}}}J(|g_1|^{2},\ldots,|g_n|^2)\,d\gamma(x)+\mathcal{O}(\varepsilon^3).
		\end{align*}
		For $a\in (0,\infty)^{n}$, Taylor's formula implies that
		\begin{equation*}
			J(a+\delta) = J(a) + \sum_{j=1}^n J_j(a) \delta_j + \frac{1}{2} \sum_{ i,j=1}^{n} J_{ij}(a) \delta_i\delta_j + \mathcal{O}(\sum_{j=1}^n|\delta_j|^3),   
		\end{equation*}
		provided that the norm of $\delta=(\delta_1,\ldots,\delta_n)\in\R^n$ is small enough. We write
		\begin{align*}
			|g_j(x)|^2=|a_j|^2+2\varepsilon \Re(\bar{a}_jz_j(\eta_j\cdot A_jx))+\varepsilon^2|z_j(\eta_j\cdot A_j x)|^2
		\end{align*}
		and we see that as  $\varepsilon\to0$, for any $|x|\leq \varepsilon^{-1/2}$ the norm of the vector
		\begin{equation*}
			\{\delta_j(x)\}_{j=1}^n:=\{2\varepsilon \Re(\bar{a}_jz_j(\eta_j\cdot A_jx))+\varepsilon^2|z_j(\eta_j\cdot A_j x)|^2\}_{j=1}^n
		\end{equation*} 
		goes to zero,  and so for small $\varepsilon>0$   we have 
        
		\begin{align*}
			J(|g_1|^2,\ldots,|g_n|^2)&= J(|a_1|^2,\ldots,|a_n|^2)  +\sum_{j=1}^n J_j(|a_1|^2,\ldots,|a_n|^2) \delta_j(x) \\
			&+ \frac{1}{2} \sum_{ i,j=1}^{n} J_{ij}(|a_1|^2,\ldots,|a_n|^2) \delta_i(x)\delta_j(x) + \mathcal{O}(\sum_{j=1}^n|\delta_j(x)|^3),   
		\end{align*}
		provided that $|x|\leq \varepsilon^{-1/2}$. Using the identity 
		\begin{align*}
			\int_{\R^n}(\eta_i\cdot A_ix)( \eta_j\cdot A_jx)\,d\gamma(x)=\eta_i^TA_iA_j^T \eta_j,
		\end{align*}
		and the fact that $\delta_j$ is a polynomial, we obtain that
		\begin{align*}
			\int_{| x|\leq \varepsilon^{-\frac{1}{2}}}\delta_j(x)\,d\gamma(x)&=\int_{\R^n}2\varepsilon \Re(\bar{a}_jz_j(\eta_j\cdot A_jx))+\varepsilon^2|z_j(\eta_j\cdot A_j x)|^2\,d\gamma(x)+ \mathcal{O}(\varepsilon^3)\\
			&=\varepsilon^2 |z_j|^2\|\eta_j\|^2 + \mathcal{O}(\varepsilon^3).
		\end{align*}
		Similarly, for small $\varepsilon>0$ we have that
		\begin{align*}
			\int_{| x|\leq \varepsilon^{-\frac{1}{2}}}\delta_i\delta_j\,d\gamma(x)
			&=  4\varepsilon^2\int_{\R^k}\Re(\bar{a}_iz_i(\eta_i\cdot A_ix)) \Re(\bar{a}_jz_j(\eta_j\cdot A_jx))\,d\gamma(x)+\mathcal{O}(\varepsilon^3)\\
			&= 4\varepsilon^{2} (\Re(\bar{a}_{i}z_{i}\eta_{i}))^{T} A_{i}A_{j}^{T} \Re(\bar{a}_{j}z_{j}\eta_{j}) + \mathcal{O}(\varepsilon^3).
		\end{align*} 
		Denote $c=(|a_1|^2,\ldots,|a_n|^2)$. The previous two calculations imply
		\begin{align*}
			\int_{| x|\leq \varepsilon^{-\frac{1}{2}}}J(|g_1|^2,\ldots,|g_n|^2)\,d\gamma(x)&= J(c)  +\varepsilon^2\sum_{j=1}^n J_j(c) |z_j|^2\|\eta_j\|^2\\
			&+ 2\varepsilon^2\sum_{1\leq i,j\leq n} J_{ij}(c)(\Re(\bar{a}_{i}z_{i}\eta_{i}))^{T} A_{i}A_{j}^{T} \Re(\bar{a}_{j}z_{j}\eta_{j})\\
   &+ \mathcal{O}(\varepsilon^3).
		\end{align*}
		Let $q_j(x)=f_j(A_jx)$, $x\in\R^k$, $j=1,\ldots,n$. Similarly,	since
		\begin{align*}
			|q_j(x_j)|^2&=|a_j|^2+2\varepsilon \Re(\bar{a}_j(\eta_j\cdot A_jx_j))+\varepsilon^2|\eta_j\cdot A_jx_j|^2
		\end{align*}
		we obtain
		\begin{align*}
			\int_{| x|\leq \varepsilon^{-\frac{1}{2}}}M(|q_1|^2,\ldots,|q_n|^2)d\gamma &= M(c)  +\varepsilon^2\sum_{j=1}^n M_j(c) \|\eta_j\|^2\\
			&+ 2\varepsilon^2\sum_{1\leq i,j\leq n} M_{ij}(c)(\Re(\bar{a}_{i}\eta_{i}))^{T} A_iA_j^T\Re(\bar{a}_{j}\eta_{j})\\
   &+ \mathcal{O}(\varepsilon^3).
		\end{align*}
		For $t_{0}=M(c)$ we have 
		\begin{align*}
			F(t_{0}+\varepsilon_{1}) = F(t_{0}) +F'(t_{0})\varepsilon_{1}+\mathcal{O}(\varepsilon_{1}^{2}),
		\end{align*}
		as $\varepsilon_{1} \to 0^{+}$. Thus, we have
		\begin{align*}
			F\left( \int_{\mathbb{R}} M(|f_1|^2,\ldots,|f_n|^2) d\gamma\right) &  = F(M(c))\\
			&\quad+F'(M(|a|^2))\varepsilon^2\sum_{j=1}^n M_j(c) \|\eta_j\|^2\\
			&\quad +F'(M(c)) 2\varepsilon^2\sum_{1\leq i,j\leq n} M_{ij}(c)(\Re(\bar{a}_{i}\eta_{i}))^{T} A_iA_j^T \Re(\bar{a}_{j}\eta_{j})  + \mathcal{O}(\varepsilon^3).
		\end{align*}
		The complex global inequality implies that
		\begin{align*}
			& J(c)  +\varepsilon^2\sum_{j=1}^n J_j(c) |z_j|^2\|\eta_j\|^2\\
			&+ 2\varepsilon^2\sum_{ i,j=1}^{n} J_{ij}(c)(\Re(\bar{a}_{i}z_{i}\eta_{i}))^{T} A_{i}A_{j}^{T} \Re(\bar{a}_{j}z_{j}\eta_{j}) + \mathcal{O}(\varepsilon^3)\\
			&\leq F(M(c))+F'(M(c))\varepsilon^2\sum_{j=1}^n M_j(c) \|\eta_j\|^2\\
			& +F'(M(c)) 2\varepsilon^2 \sum_{ i,j=1}^{n} M_{ij}(c)(\Re(\bar{a}_{i}\eta_{i}))^{T} A_iA_j^T\Re(\bar{a}_{j}\eta_{j})  + \mathcal{O}(\varepsilon^3).
		\end{align*}
		Recall, that $J(c)=F(M(c))$. So canceling the constant terms,  dividing both sides of the inequality by $\varepsilon^{2}$,  and taking $\varepsilon \to 0$ we obtain 
		\begin{align*}
			& \sum_{j=1}^n J_j(c) |z_j|^2\|\eta_j\|^2+ 2 \sum_{ i,j=1}^{n} J_{ij}(c)(\Re(\bar{a}_{i}z_{i}\eta_{i}))^{T} A_{i}A_{j}^{T} \Re(\bar{a}_{j}z_{j}\eta_{j})\leq\\
			&\leq F'(M(c))\sum_{j=1}^n M_j(c) \|\eta_j\|^2+ 2F'(M(c)) \sum_{ i,j=1}^{n} M_{ij}(c)(\Re(\bar{a}_{i}\eta_{i}))^{T} A_iA_j^T \Re(\bar{a}_{j}\eta_{j}).
		\end{align*}
		Denoting $\omega_j = \bar{a}_j\eta_j \in\C^{k_j}$, $j=1,\ldots,n$ the above rewrites
		\begin{align*}
			& \sum_{j=1}^n J_j(c) |z_j|^2\|\eta_j\|^2+ 2 \sum_{ i,j=1}^{n} J_{ij}(c)(\Re(z_{i}\omega_i))^{T} A_{i}A_{j}^{T} \Re(z_{j}\omega_j)\leq\\
			&\leq F'(M(c))\sum_{j=1}^n M_j(c) \|\eta_j\|^2+ 2 F'(M(c))\sum_{ i,j=1}^{n} M_{ij}(c)(\Re(\omega_i))^{T} A_iA_j^T \Re(\omega_j).  
		\end{align*}
		After substituting the values 
		\begin{align*}
			J_i(c) &= F'(M(c)) M_i(c),\\
			J_{ij}(c) &= F''(M(c)) M_i(c)M_j(x) + F'(M(c)) M_{ij}(c),
		\end{align*}
		and then dividing by 2$F'(M(c))$, it becomes 
		\begin{align*}
			&-\frac{F''(M(c))}{F'(M(c))} \sum_{ i,j=1}^{n} M_i(c)M_j(x) (\Re(z_{i}\omega_i))^{T} A_{i}A_{j}^{T} \Re(z_{j}\omega_j)\\
			&+\sum_{ i,j=1}^{n} M_{ij}(c)\left[ (\Re(\omega_i))^{T} A_iA_j^T\Re(\omega_j)-  (\Re(z_{i}\omega_i))^{T} A_{i}A_{j}^{T} \Re(z_{j}\omega_j) \right] \\
			&+ \frac{1}{2}\sum_{j=1}^n M_j(c)(1-|z_j|^2) \|\eta_j\|^2  \geq0
		\end{align*}
		which coincide with the local condition of the theorem.
		
\eqref{Localcomplex}$\implies$\eqref{monotonicitycomplex}: It suffices to show the implication for a translation $\tilde{M}(t_1,\ldots,t_n) = M(t_1+\varepsilon,\ldots,t_n+\varepsilon)$, where $\varepsilon > 0$ is fixed, instead of $M$. The main reason for introducing $\tilde{M}$ is to avoid issues with undefined values such as $M_m(0,\ldots,0)$, $M_{mq}(0,\ldots,0)$, and the fact that $M(0,\ldots,0)$ can be zero  which can make $F'(M(0,\ldots,0))$ and $F''(M(0,\ldots,0))$ undefined. 
		
		First we notice that the local condition for $M$ implies the local condition for $\tilde{M}$. Indeed, notice that $\tilde{M} > M(\varepsilon, \ldots, \varepsilon) > 0$ since $M$ has all partial derivatives positive. Next, it suffices to apply the local inequality at the point $\mathbf{t} + \varepsilon = (t_1 + \varepsilon, \ldots, t_n + \varepsilon)$ instead of $\mathbf{t} = (t_1, \ldots, t_n)$, and use the inequality
		\begin{align*}
			\sum_{m=1}^n\frac{M_{m}({\bf{t}}+\varepsilon)}{t_m+\varepsilon}\frac{1-|z_m|^2}{2}\sum_{b=1}^{k_m}|w_{m,b}|^2\leq 	\sum_{m=1}^n\frac{\tilde{M}_{m}({\bf{t}})}{t_m}\frac{1-|z_m|^2}{2}\sum_{b=1}^{k_m}|w_{m,b}|^2.
		\end{align*}
		Next, we notice that the statement about  monotonicity for $\tilde{M}$ implies the statement about monotonicity for $M$. Indeed, for any $s_{1}, s_{2} \in (0,1)$, $s_{1}<s_{2},$ we have $\tilde{C}(s_{1})\leq \tilde{C}(s_{2})$, where 
		\begin{align*}
			\tilde{C}(s):=Q_{1-s}^x F\big(Q_s^uM\big(\big|Q_{1-s}^yQ_s^v&\ell_1((A_1u+iv)+z_1(A_1x+iy)\big)\big|^2+\varepsilon,\ldots\\
			&\ldots,\big|Q_{1-s}^yQ_s^v\ell_n((A_n u+iv)+z_n(A_n x+iy))\big|^2+\varepsilon\big)\big)\geq0.
		\end{align*}
		Let us denote $t_m=\big|Q_{1-s}^yQ_s^v\ell_n((A_m u+iv)+z_m(A_m x+iy))\big|^2$, $m=1,\ldots,n$. Letting $\varepsilon \to 0$, by continuity, $M(t_1 + \varepsilon, \ldots, t_n + \varepsilon) \to M(t_1, \ldots, t_n)$. Since $M$  satisfies polynomial growth condition, applying the Lebesgue dominated convergence theorem we get also that $Q_s^{u}M(t_1+\varepsilon,\ldots,t_n+\varepsilon)\to Q_s^{u}M(t_1,\ldots,t_n)$. Then, by the continuity of $F$ we apply $F$ to this converges and last for the same reason apply $Q_{1-s}^x$, finally obtaining $C'(s) \geq 0$ for $s \in (0,1)$. 
        
        In what follows, for simplicity, we will continue the proof using the letter $M$ instead of $\tilde{M}$, with the additional assumption on $M$ that all the partial deriavtives are well defined on the boundary of the domain of $M$, and $M(0,0, \ldots, 0)>0$.
		
		For $m=1,\ldots,n$, let $\ell_m:\R^{k_m}\to\R$ be polynomials,  and $g_m$ the quantity given in \eqref{gforComplex}. Set
				\begin{align*}
			&{\bf{G}}=(|g_1|^2,\ldots,|g_n|^2),\\
			&{\bf{A}}=Q_{s}^{u}M({\bf{G}}).
		\end{align*}
		With this notation, the function given in \eqref{4Hflows} is written as $C(s)=Q_{1-s}^xF(Q_{s}^{u}M({\bf{G}}))$, and thus we have 
        \begin{align*}
	C'(s)&=\left(\frac{\mathrm{d}}{\mathrm{d}s}Q_{1-s}^x\right)F(Q_{s}^{u}M({\bf{G}}))\\
	&\quad+Q_{1-s}^xF'(Q_{s}^{u}M({\bf{G}}))\left(\frac{\mathrm{d}}{\mathrm{d} s}Q_s^u\right)M({\bf{G}}) \\
	&\quad+Q_{1-s}^xF'(Q_{s}^{u}M({\bf{G}})) Q_s^u\frac{\mathrm{d}}{\mathrm{d} s} M({\bf{G}}).
\end{align*}

In turn, by the heat equation we have
		\begin{equation*}
			C'(s)=\frac{1}{2}Q_{1-s}^x\left(-\Delta_xF({\bf{A}})+F'({\bf{A}})Q_s^u\Delta_uM({\bf{G}}) +2F'({\bf{A}}) Q_s^u\frac{\mathrm{d}}{\mathrm{d} s} M({\bf{G}})\right).
		\end{equation*}
		Let $\ell_{m,b}$ be the partial derivative of $\ell_m$ with respect to $x_b$ variable. Set
		\begin{equation*}
			g_{m,b}=Q_{1-s}^yQ_s^v    \ell_{m,b}((A_m u+iv)+z_m(A_m x+iy)).
		\end{equation*}
	Similarly we denote $g_{m.bb}$ as the corresponding functions associated with the second order partial derivatives of $\ell_{m,bb}$. Let $A_m^b$ be $b$'th row of the matrix $A_m$, $b=1,\ldots,k_m$,  and let $ e_1, \ldots, e_k $ be the standard orthonormal basis in $ \mathbb{R}^k$.	We see that the partial derivatives $(g_m)_{u_j}$ and $(g_m)_{x_j}$ are scalar multiples of each other, since
		\begin{align*}
			(g_m)_{u_j}&=\sum_{b=1}^{k_m}g_{m,b}(A_{m,b}\cdot e_j),\\
			(g_m)_{x_j}&=z_m \sum_{b=1}^{k_m}g_{m,b}(A_{m,b}\cdot e_j).
		\end{align*}
		Therefore,
	\begin{align*}
		(|g_m|^{2})_{u_j}&= 2\sum_{b=1}^{k_m}(A_{m,b}\cdot e_j) \Re(g_{m,b} \bar{g}_m ),  \\ 
		(|g_m|^{2})_{x_j}&=2\sum_{b=1}^{k_m}(A_{m,b}\cdot e_j) \Re(z_m g_{m,b} \bar{g}_m ).
	\end{align*}
	Moreover,
	\begin{align*}
		(g_m)_{u_ju_j}&=\sum_{b_1, b_2=1}^{k_m}g_{m,b_1b_2}(A_{m,b_1}\cdot e_j)(A_{m,b_2}\cdot e_j)\\
		(g_m)_{x_jx_j}&=z_m^2\sum_{b_1, b_2=1}^{k_m}g_{m,b_1b_2}(A_{m,b_1}\cdot e_j)(A_{m,b_2}\cdot e_j)
	\end{align*}
and thus we have
		\begin{align*}
			(|g_m|^2)_{u_ju_j}&=  \sum_{b_1, b_2=1}^{k_m}(A_{m,b_1}\cdot e_j)(A_{m,b_2}\cdot e_j)g_{m,b_1b_2}\bar{g}_m\\
			&+\sum_{b_1, b_2=1}^{k_m}(A_{m,b_1}\cdot e_j)(A_{m,b_2}\cdot e_j)\bar{g}_{m,b_1b_2}g_m  \\
			&+2|(g_m)_{u_j}|^2  \\
			(|g_m|^{2})_{x_jx_j}&=  \sum_{b_1, b_2=1}^{k_m}z_m^2(A_{m,b_1}\cdot e_j)(A_{m,b_2}\cdot e_j)g_{m,b_1b_2}\bar{g}_m\\
			&+\sum_{b_1, b_2=1}^{k_m}\bar{z}_m^2(A_{m,b_1}\cdot e_j)(A_{m,b_2}\cdot e_j)\bar{g}_{m,b_1b_2}g_m\\
			&  +2|(g_m)_{x_j}|^2.  
		\end{align*}
		Next, the product rule and the heat equation together imply
		\begin{align*}
			(g_m)_s&=\frac{\mathrm{d}}{\mathrm{d}s}Q_{1-s}^yQ_s^v\ell_m(S) \\
			&=\frac{1}{2}Q_{1-s}^yQ_s^v\left(-\sum_{b=1}^{k_m}\ell_{m,bb}(S)(iz_m)^2+\sum_{b=1}^{k_m}\ell_{m,bb}(S)(i)^2\right)\\
			&=\frac{z_m^2-1}{2}\sum_{b=1}^{k_m}g_{m,bb},
		\end{align*}
		where $S=( A_m u+iv)+z_m( A_m x+iy)$, and therefore, 
		\begin{align*}
			\frac{\mathrm{d}}{\mathrm{d}s}M({\bf{G}})&=\sum_{m=1}^nM_{m}({\bf{G}})\left(|g_m|^2\right)_s\\
			&=\sum_{m=1}^nM_{m}({\bf{G}})\left(\bar{g}_m(g_m)_s+g_m(\bar{g}_m)_s\right)\\
			&=\sum_{m=1}^nM_{m}({\bf{G}})\left(\frac{z^{2}_m-1}{2}\sum_{b=1}^{k_m}\bar{g}_mg_{m,bb}+\frac{\bar{z}^{2}_m-1}{2}\sum_{b=1}^{k_m}g_m\bar{g}_{m,bb}\right).
		\end{align*}
		To calculate $\Delta_xF({\bf{A}})$ first we otice that 
		\begin{equation*}
			\frac{\partial}{\partial x_j}F({\bf{A}})=F'({\bf{A}})	\frac{\partial}{\partial x_j}(Q_s^uM({\bf{G}}))=F'({\bf{A}})\left(Q_s^u\sum_{m=1}^nM_{m}({\bf{G}})(|g_m|^2)_{x_j}\right),
		\end{equation*}
		and so
		\begin{align*}
			\frac{\partial^2}{\partial x_j^2}F({\bf{A}})&=F''({\bf{A}})\left(Q_s^u\sum_{m=1}^nM_{m}({\bf{G}})(|g_m|^2)_{x_j}\right)^2\\
			&+F'({\bf{A}})Q_s^u\left(\sum_{m, q=1}^{n}M_{mq}({\bf{G}})(|g_m|^2)_{x_j}(|g_q|^2)_{x_j} \right)\\
			&+F'({\bf{A}})Q_s^u\left( \sum_{m=1}^nM_{m}({\bf{G}})(|g_m|^2)_{x_jx_j}\right).
		\end{align*}
		On the other hand, for $\Delta_uM({\bf{G}})$ we have
		\begin{align*}
			\frac{\partial^2}{\partial u_j^2}M({\bf{G}})&=\frac{\partial}{\partial u_j}\left(\sum_{m=1}^nM_{m}({\bf{G}})(|g_m|^2)_{u_j}\right)\\
			&=\sum_{m, q=1}^{n}M_{mq}({\bf{G}})(|g_m|^2)_{u_j}(|g_q|^2)_{u_j}+\sum_{m=1}^nM_{m}({\bf{G}})(|g_m|^2)_{u_ju_j}.
		\end{align*}
		We can write $C'(s)=\frac{1}{2}Q_{1-s}^x\left(\sum_{j=1}^k\mathcal{L}_j\right)$ where 
		\begin{align*}
			\mathcal{L}_j=-	\frac{\partial^2}{\partial x_j^2}F({\bf{A}})+  F'({\bf{A}}) Q_s^u \frac{\partial^2}{\partial u_j^2}M({\bf{G}})+\frac{2}{k}F'({\bf{A}})Q_s^u\frac{\mathrm{d}}{\mathrm{d}s}M({\bf{G}}).
		\end{align*}
		Substituting we see that
		\begin{align*}
			\mathcal{L}_j=&-F''({\bf{A}})\left[Q_s^u\sum_{m=1}^nM_{m}({\bf{G}})(|g_m|^2)_{x_j}\right]^2\\   
			&+F'({\bf{A}})Q_s^u\left(\sum_{m, q=1}^{n}M_{mq}({\bf{G}})\Big[(|g_m|^2)_{u_j}(|g_q|^2)_{u_j}-(|g_m|^2)_{x_j}(|g_q|^2)_{x_j}\Big] \right)\\
			&+F'({\bf{A}})Q_s^u\left(\sum_{m=1}^nM_{m}({\bf{G}})\Big[(|g_m|^2)_{u_ju_j}-(|g_m|^2)_{x_jx_j}+\frac{z_m^2-1}{k}\sum_{b=1}^{k_m}\bar{g}_mg_{m,bb}+\frac{\bar{z}_m^2-1}{k}\sum_{b=1}^{k_m}g_m\bar{g}_{m,bb}\Big]\right).
		\end{align*}
		Recall $F'>0$. Next, we set
		\begin{align*}
			\Psi_j=&-\frac{F''({M(\bf{G}}))}{F'(M({\bf{G}}))}\left[\sum_{m=1}^nM_{m}({\bf{G}})(|g_m|^2)_{x_j}\right]^2\\   
			&+\sum_{m, q=1}^{n}M_{mq}({\bf{G}})\Big[(|g_m|^2)_{u_j}(|g_q|^2)_{u_j}-(|g_m|^2)_{x_j}(|g_q|^2)_{x_j}\Big] \\
			&+\sum_{m=1}^nM_{m}({\bf{G}})\Big[(|g_m|^2)_{u_ju_j}-(|g_m|^2)_{x_jx_j}+\frac{z_m^2-1}{k}\sum_{b=1}^{k_m}\bar{g}_mg_{m,bb}+\frac{\bar{z}_m^2-1}{k}\sum_{b=1}^{k_m}g_m\bar{g}_{m,bb}\Big].
		\end{align*}
		We notice that $\frac{\mathcal{L}_j}{F'({\bf{A}})}\geq Q_s^u\Psi_j$. Indeed, this follows from Jensen's inequality, namely,  we have
		\begin{align*}
			- \frac{F''({\bf{A}})}{F'({\bf{A}})}(Q_s^uY)^2 &\geq- Q_{s}^{u}\left( \frac{F''(M({\bf{G}}))}{F'(M({\bf{G}}))}Y^2\right)  \qquad \text{for any intgrable} \ Y,
		\end{align*}
		since the function 
		\begin{align*}
			(t,y)\mapsto \frac{F''(t)}{F'(t)}y^2
		\end{align*}
		is convex on $(0,\infty)\times \R$. 

        Thus we have 
        \begin{align*}
            &C'(s)=\frac{1}{2}Q_{1-s}^{x}\left(\sum_{j=1}^{k} \mathcal{L}_{j}\right) \geq \frac{1}{2}Q_{1-s}^{x}\left(F'({\bf A})Q_{s}^{u} \left(\sum_{j=1}^{k} \Psi_{j} \right)\right).
        \end{align*}
        
        We will show that the inequality $\sum_{j=1}^k \Psi_j\geq0$ is the same inequality as the local condition of Proposition \ref{equivalComplex}. To show this, we express the three quantities inside the square brackets in the definition of $\Psi_j$ in terms of $g_{m,b}$. We write
		\begin{align*}
			\mathcal{Q}_1:&=\sum_{j=1}^k\left[\sum_{m=1}^n M_{m}({\bf{G}})(|g_m|^2)_{x_j}\right]^2\\
			&=\sum_{j=1}^k\sum_{m, q=1}^{n}\left[M_{m}({\bf{G}})(|g_m|^2)_{x_j} \right] \left[M_{q}({\bf{G}})(|g_q|^2)_{x_j} \right].
		\end{align*}
	For $m = 1, \ldots, n$, let $w_m \in \mathbb{C}^{k_m}$ be the complex vector with coordinates $w_{m,b} = \bar{g}_m g_{m,b} \in \mathbb{C}$, where $1 \leq b \leq k_m$. Plug in the partial derivatives $(|g_m|^2)_{x_j}$ we get
	\begin{align*}
		\mathcal{Q}_1&=4\sum_{j=1}^k\sum_{m, q=1}^{n}\sum_{\substack{1\leq b_1\leq k_m \\ 1\leq b_2\leq k_q}} (A_{m,b_1}\cdot e_j) (A_{q,b_2}\cdot e_j)M_{m}({\bf{G}}) M_{q}({\bf{G}})\Re(z_m \bar{g}_m g_{m,b_1}) \Re(z_q \bar{g}_q g_{q,b_2})\\
		&=4\sum_{m, q=1}^{n}\sum_{\substack{1\leq b_1\leq k_m \\ 1\leq b_2\leq k_q}} A_{m,{b_1}} A_{q,{b_2}}^T M_{m}({\bf{G}}) M_{q}({\bf{G}})\Re(z_m \bar{g}_m g_{m,b_1}) \Re(z_q \bar{g}_q g_{q,b_2})\\
		&=4\sum_{m, q=1}^{n} M_{m}({\bf{G}})M_{q}({\bf{G}})  (\Re(z_mw_{m}))^T A_{m} A_{q}^T  \Re(z_qw_{q}).
	\end{align*}
		Similarly, we also have that
		\begin{align*}
			\mathcal{Q}_2&:=\sum_{j=1}^k\left[(|g_m|^2)_{u_j}(|g_q|^2)_{u_j}-(|g_m|^2)_{x_j}(|g_q|^2)_{x_j}\right]\\
			&=4 \sum_{\substack{1\leq b_1\leq k_m \\ 1\leq b_2\leq k_q}} A_{m,{b_1}} A_{q,{b_2}}^T \left[\Re(\bar{g}_m g_{m,b_1})  \Re(\bar{g}_q g_{q,b_2})  -  \Re(z_m \bar{g}_m g_{m,b_1})  \Re(z_q \bar{g}_q g_{q,b_2})  \right]\\
			&=4\left( (\Re(w_{m}))^TA_{m} A_{q}^T  \Re(w_{q})  -  (\Re(z_mw_{m}))^T A_{m} A_{q}^T  \Re(z_qw_{q}) \right) .
		\end{align*}
		For the last quantity
		\begin{align*}
		\mathcal{Q}_3:= \sum_{j=1}^k\Bigg[\Bigg.&\sum_{b_1, b_2=1}^{k_m}\left[(A_{m,b_1}\cdot e_j) (A_{m,b_2}\cdot e_j)-z_m^2(A_{m,b_1}\cdot e_j) (A_{m,b_2}\cdot e_j)g_{m,b_1b_2}\bar{g}_m\right]\\
			+&\sum_{b_1, b_2=1}^{k_m}\left[(A_{m,b_1}\cdot e_j) (A_{m,b_2}\cdot e_j)-\bar{z}_m^2(A_{m,b_1}\cdot e_j) (A_{m,b_2}\cdot e_j)\bar{g}_{m,b_1b_2}g_m\right]\\
			+&\frac{z_m^2-1}{k}\sum_{b=1}^{k_m}g_{m,bb}\bar{g}_m +\frac{\bar{z}_m^2-1}{k}\sum_{b=1}^{k_m}\bar{g}_{m,bb}g_m+2(|(g_m)_{u_j}|^2 -|(g_m)_{x_j}|^2)\Bigg.\Bigg],
		\end{align*}
		we see that  moving the sum over $j$ inside the brackets, the first four terms vanish, leaving only the last term, since $A_mA^T_m=Id$. Also,
		\begin{align*}
			\sum_{j=1}^k|(g_m)_{u_j}|^2&=\sum_{j=1}^k\sum_{b_1, b_2=1}^{k_m}\Re((A_{m,b_1}\cdot e_j)g_{m,b_1}\overline{(A_{m,b_2}\cdot e_j)g_{m,b_2}})\\
			&=\sum_{b_1, b_2=1}^{k_m} A_{m,b_1}A_{m,b_2}^T\Re(g_{m,b_1}\bar{g}_{m,b_2})\\
			&=\sum_{b=1}^{k_m}|g_{m,b}|^2,
		\end{align*}
		and since $(g_m)_{x_j}=z_m(g_m)_{u_j}$, the quantity $	\mathcal{Q}_3$ simplifies to 
		\begin{align*}
			\mathcal{Q}_3=2\left(1-|z_m|^2 \right)\sum_{b=1}^{k_m}|g_{m,b}|^2.
		\end{align*}

        Thus we have 
        	\begin{align*}		\sum_{j=1}^k\Psi_j=&-4\frac{F''(M({\bf{G}}))}{F'(M({\bf{G}}))}\left(\sum_{1\leq m,q\leq n}  M_{m}({\bf{G}})M_{q}({\bf{G}}) (\Re(z_mw_{m}))^T A_{m} A_{q}^T  \Re(z_qw_{q})  \right)\\
			&+4\left(\sum_{ m, q=1}^{n} M_{mq}({\bf{G}})\left( (\Re(w_{m}))^T A_{m} A_{q}^T  \Re(w_{q})  -  (\Re(z_mw_{m}))^T A_{m} A_{q}^T  \Re(z_qw_{q}) \right)\right)\\
			&+2\left(\sum_{m=1}^n M_{m}({\bf{G}})\left(1-|z_m|^2 \right)\sum_{b=1}^{k_m}|g_{m,b}|^2 \right).
		\end{align*}
        Therefore the nonnegativity of $\sum_{j=1}^{j}\Psi_{j}$ follows from (\ref{Localcomplex}). Indeed, choose $c_{j} = |g_{j}|^{2}$, $j=1,\ldots, n$,  and $w_{m,b} = \bar{g}_{m}g_{m,b} \in \mathbb{C}$, $b=1, \ldots, k_{m}$, $m=1, \ldots, n$ in (\ref{Localcomplex}). 
	So we obtain $C'(s) \geq 0$.

\eqref{monotonicitycomplex}$\implies$\eqref{Globalcomplex}:	Let $f_j: \mathbb{R}^{k_j} \to \mathbb{C}$, $j = 1, \ldots, n$, be polynomials expressed in the Fourier-Hermite basis, given by  
\begin{equation*}
	f_j(x) = \sum_{|\alpha| \leq \deg(f_j)} c_{\alpha,j} H_\alpha(x),
\end{equation*}
where $H_\alpha(x)$ denotes the Hermite polynomial indexed by the multi-index $\alpha$.  Now, consider the polynomials $\ell_j: \mathbb{R}^{k_j} \to \mathbb{C}$, $j = 1, \ldots, n$, expressed in the monomial basis as  
\begin{equation*}
	\ell_j(x) = \sum_{|\alpha| \leq \deg(f_j)} c_{\alpha,j} x^\alpha,
\end{equation*}
where $x^\alpha = x_1^{\alpha_1} x_2^{\alpha_2} \cdots x_{k_j}^{\alpha_{k_j}}$.
We observe, via a change of variables, that the function $C:[0,1]\to\R$ in \eqref{4Hflows} can be equivalently written as
\begin{equation*}
	C(s)=\int_{\R^k}F\left(\int_{\R^k}M(|q_1(u,x,s)|^2,\ldots,|q_n(u,x,s)|^2)\,d\gamma(u)\right)\,d\gamma(x),
\end{equation*}
where for $j=1,\ldots,n$,
\begin{equation*}
	q_j(u,x,s)=\int_{\R^{k_j}}\int_{\R^{k_j}}\ell_j(\sqrt{s}(A_j u+iv)+z_j\sqrt{1-s}(A_j x+iy))\,d\gamma(v)\,d\gamma(y).
\end{equation*}
Notice that
\begin{align*}
	q_j(u,x,0)&=T_{z_j}f_j(A_j x),\\
	q_j(u,x,1)&=f_j(A_j u).
\end{align*}
Hence, the function $C$ interpolates the inequality at the end points $s=0$ and $s=1$, namely
\begin{align*}
	C(0)&=	\int_{\R^k}F\circ M(|T_{z_1}(f_1(A_1 x))|^2, \ldots, |T_{z_n}(f_n( A_n x))|^2) \,d\gamma(x)\\
	C(1)&=F\left(	\int_{\R^k}M(|f_1(A_1 u)|^2, \ldots, |f_n(A_n u)|^2)\,d\gamma(u)\right),
\end{align*}
and therefore the monotonicity of $C$ finishes the proof.

	\end{proof}
	
\begin{proof}[Proof of Theorem \ref{complexCaseTheorem}]

Let $F$ and $B$ satisfy the local condition of Theorem \ref{complexCaseTheorem}, i.e.,  for some $|z_p|\leq1$ it holds
\begin{align}\label{localforProof}
&-\frac{F''(B(\bf{c}))}{F'(B(\bf{c}))}\sum_{p,q} 
B_{p}({\bf{c}})B_{q}({\bf{c}}) (\Re z_{p} w_{p})^{T}A_{p}A_{q}^{T}(\Re z_{q} w_{q}) \\
&\nonumber+\sum_{p,q}  B_{p,q}({\bf{c}}) \left((\Re w_{p})^{T}A_{p}A_{q}^{T}(\Re w_{q})  - (\Re z_{p} w_{p})^{T}A_{p}A_{q}^{T}(\Re z_{q} w_{q})\right) \\
&\nonumber+\sum_{p}\frac{B_{p}({\bf{c}})}{c_{p}}  (|\Im w_{p}|^2-|\Im z_pw_{p}|^2)\geq0
\end{align}
for all $w_{p} \in \mathbb{C}^{k_{p}}$, $p=1,\ldots, n$. We claim that if we set $B(c_1,\ldots,c_n)=M(c_1^2,\ldots,c_n^2)$  then the local condition (\ref{localforProof}) is the same as the local condition (\ref{Localcomplex}) in Proposition \ref{equivalComplex}. Indeed, let us write $B({\bf{c}})=M({\bf{c}}^2)$. We have 
\begin{align*}
&B_p({\bf{c}})=2c_pM_p({\bf{c}}^2),\\
&B_{pq}({\bf{c}})=4c_pc_qM_{pq}({\bf{c}}^2)+2\delta_{pq}M_p({\bf{c}}^2).
\end{align*}
Substituting this expressions into (\ref{localforProof}) we obtain 
\begin{align*}
&-4\frac{F''(M(\bf{c}^2))}{F'(M(\bf{c}^2))}\sum_{p,q} 
c_pc_qM_{p}({\bf{c}}^2)M_{q}({\bf{c}}^2) (\Re z_{p} w_{p})^{T}A_{p}A_{q}^{T}(\Re z_{q} w_{q}) \\
&\quad+4\sum_{p,q}  c_pc_qM_{pq}({\bf{c}}^2))\left((\Re w_{p})^{T}A_{p}A_{q}^{T}(\Re w_{q})  - (\Re z_{p} w_{p})^{T}A_{p}A_{q}^{T}(\Re z_{q} w_{q})\right) \\
&\quad+2\sum_{p,q}  \delta_{pq}M_p({\bf{c}}^2)\left((\Re w_{p})^{T}A_{p}A_{q}^{T}(\Re w_{q})  - (\Re z_{p} w_{p})^{T}A_{p}A_{q}^{T}(\Re z_{q} w_{q})\right) \\
&\quad+2\sum_{p}M_p({\bf{c}}^2)  (|\Im w_{p}|^2-|\Im z_pw_{p}|^2)\geq0.
\end{align*}
Next, observe that the term above involving $\delta_{pq}$ simplifies to
\begin{align*}
2\sum_{p}  M_p({\bf{c}}^2)\left(|\Re w_{p}|^2-|\Re z_pw_{p}|^2\right)
\end{align*}
which can be combined with the last term. Thus the expression above simplifies to 
\begin{align*}
&-4\frac{F''(M(\bf{c}^2))}{F'(M(\bf{c}^2))}\sum_{p,q} 
c_pc_qM_{p}({\bf{c}}^2)M_{q}({\bf{c}}^2) (\Re z_{p} w_{p})^{T}A_{p}A_{q}^{T}(\Re z_{q} w_{q}) \\
&\quad+4\sum_{p,q}  c_pc_qM_{pq}({\bf{c}}^2))\left((\Re w_{p})^{T}A_{p}A_{q}^{T}(\Re w_{q})  - (\Re z_{p} w_{p})^{T}A_{p}A_{q}^{T}(\Re z_{q} w_{q})\right) \\
&\quad+2\sum_{p}M_p({\bf{c}}^2)  (1-|z_p|^2)|w_{p}|^2\geq0.
\end{align*}
Dividing by four, rescaling $w_p$ to $c_pw_p$ and then substituting $(c_1,\ldots,c_n)$ by $(\sqrt{c_1},\ldots,\sqrt{c_n})$ the local reduces to the local stated in Proposition \ref{equivalComplex} and thus, \eqref{localforProof} is equivalent to: for all polynomials $f_j:\R^{k_j}\to\C$ it holds
\vspace{0.2cm}
\begin{align*}
\int_{\R^k}F\circ M(|T_{z_1}&(f_1(A_1 x))|^2, \ldots, |T_{z_n}(f_n( A_n x))|^2) \,d\gamma(x)\\
&\leq F\left(	\int_{\R^k}M(|f_1(A_1 x)|^2, \ldots, |f_n(A_n x)|^2)\,d\gamma(x)\right).
\end{align*}  
Substituting $B$ back into the expression completes the proof.
\end{proof}

\section{\textbf{Proof of Theorems \ref{MCH}--\ref{chaoses}}}\label{proofsFROMcomplex}

\begin{proof}[Proof of Theorem~\ref{MCH}]
	Let   \( F(t) = t^{\alpha} \) and \( B(t_1, \ldots, t_n) = t_1^{p_1} \cdots t_n^{p_n} \)  in Theorem~\ref{complexCaseTheorem}. In this case the inequality \eqref{multipulti} holds for all polynomials \( f_{j} : \mathbb{R}^{k_{j}} \to \mathbb{C} \) if and only if
	\begin{equation}\label{ABC}
		\mathcal{A}({\bf{t}},w) + \mathcal{B}({\bf{t}},w) + \mathcal{C}({\bf{t}},w) \geq 0
	\end{equation}
	for all \( {\bf{t}} = (t_1,\ldots,t_n) \in (0,\infty)^n \) and all \( w=(w_1,\ldots,w_n)$, $w_i \in \mathbb{C}^{k_i} \), \( i=1,\ldots,n \), where
	\begin{align*}
		\mathcal{A}({\bf{t}},w) &:= -\sum_{i,j} \left(B_{ij}({\bf{t}}) + \frac{F''(B(\bf{t}))}{F'(B(\bf{t}))} 
		B_{i}({\bf{t}}) B_{j}({\bf{t}})\right) (\Re z_{i} w_{j}) \mathrm{cov}(\xi_{i}, \xi_{j}) (\Re z_{i} w_{j})^{T}, \\
		\mathcal{B}({\bf{t}},w) &:= \sum_{i,j} B_{ij}({\bf{t}}) (\Re w_{i}) \mathrm{cov}(\xi_{i}, \xi_{j}) (\Re w_{j})^{T}, \\
		\mathcal{C}({\bf{t}},w) &:= \sum_{j} \frac{B_{j}({\bf{t}})}{t_{j}} \left( |\Im w_{j}|^2 - |\Im z_j w_{j}|^2 \right).
	\end{align*}
What remains is to reduce condition \eqref{ABC} to \eqref{localcc}. We first note that the first and second order partial derivatives of \( B \) are given by  
	\begin{align*}
		B_i({\bf{t}}) &= \frac{B({\bf{t}})}{t_i}p_i , \\
		B_{ij}({\bf{t}}) &= \frac{B({\bf{t}})}{t_i t_j} p_i p_j -\delta_{ij} \frac{B({\bf{t}})}{t_i t_j}p_i.
	\end{align*}
Thus, the coefficients in \( \mathcal{A}({\bf{t}},w) \) simplify to 
	\begin{align*}
B_{ij}({\bf{t}}) + \frac{F''(B(\bf{t}))}{F'(B(\bf{t}))} 
B_{i}({\bf{t}}) B_{j}({\bf{t}})=\alpha\frac{B({\bf{t}})}{t_i t_j}  p_i p_j-  \delta_{ij}\frac{B({\bf{t}})}{t_i t_j} p_i 
	\end{align*}
Since the covariance matrix satisfies \( \mathrm{cov}(\xi_{i}, \xi_{i}) = I_{k_i} \), we obtain  
	\begin{align*}
		\mathcal{A}({\bf{t}},w) + \mathcal{B}({\bf{t}},w) &= \sum_{i,j} \frac{B({\bf{t}})}{t_i t_j} p_i p_j 
		\left( (\Re w_{i}) \mathrm{cov}(\xi_{i}, \xi_{j}) (\Re w_{j})^{T} 
		- \alpha (\Re z_{i} w_{i}) \mathrm{cov}(\xi_{i}, \xi_{j}) (\Re z_{j} w_{j})^{T} \right) \\
		&\quad + \sum_j \frac{B({\bf{t}})}{t_j^2} p_j \left( |\Re z_j w_j|^2 - |\Re w_j|^2 \right).
	\end{align*}
Adding \( \mathcal{C}({\bf{t}},w) \) to the above expression, condition \eqref{ABC} takes the form 
	\begin{align*}
		\sum_{i,j} & \frac{B({\bf{t}})}{t_i t_j} p_i p_j 
		\left( (\Re w_{i}) \mathrm{cov}(\xi_{i}, \xi_{j}) (\Re w_{j})^{T} 
		- \alpha (\Re z_{i} w_{i}) \mathrm{cov}(\xi_{i}, \xi_{j}) (\Re z_{j} w_{j})^{T} \right) \\
		&\quad + \sum_j \frac{B({\bf{t}})}{t_j^2} p_j \left( (|\Re z_j w_j|^2 - |\Re w_j|^2) + (|\Im w_{j}|^2 - |\Im z_j w_{j}|^2) \right) \geq 0.
	\end{align*}
Since \( (\Re z)^2 - (\Im z)^2 = \Re(z^2) \), we obtain  
	\begin{align*}
		(|\Re z_j w_j|^2 - |\Re w_j|^2) + (|\Im w_{j}|^2 - |\Im z_j w_{j}|^2) 
		&= \Re(z_j^2 w_j w_j^T) - \Re(w_j w_j^T) \\
		&= \Re \left( (z_j^2 - 1) w_j w_j^T \right).
	\end{align*}
Thus, condition \eqref{ABC} reduces to  
	\begin{align*}
	\sum_{i,j} & \frac{B({\bf{t}})}{t_i t_j} p_i p_j 
	\left( (\Re w_{i}) \mathrm{cov}(\xi_{i}, \xi_{j}) (\Re w_{j})^{T} 
	- \alpha (\Re z_{i} w_{i}) \mathrm{cov}(\xi_{i}, \xi_{j}) (\Re z_{j} w_{j})^{T} \right) \\
	&\quad + \sum_j \frac{B({\bf{t}})}{t_j^2} p_j \Re \left( (z_j^2 - 1) w_j w_j^T \right) \geq 0.
\end{align*}
Finally, replacing  \( w_i \) by \( \frac{t_i}{p_i\sqrt{B(t)}} w_i \)  completes the proof.  
\end{proof}

\begin{proof}[Proof of Theorem~\ref{MBEK}]
	Choosing $z_j=is_j$ in Theorem \ref{MCH}, we obtain that inequality \eqref{nfuComImagi} holds if and only if 
	\begin{align}\label{reim1}
		\sum_{i,j}(\Re w_{i})\mathrm{cov}(\xi_{i}, \xi_{j}) (\Re w_{j})^{T} - \alpha s_is_j(\Im w_{i}) \mathrm{cov}(\xi_{i}, \xi_{j}) (\Im w_{j})^{T} \geq \sum_j \frac{(1 + s_j^2)}{p_j}\Re(w_j w_j^T) 
	\end{align}
	for all $w_j\in\C^{k_j}$, $j=1,\ldots,n$. 
    
We have
	\begin{align*}
		w_j w_j^T &= (\Re w_j + i \Im w_j)(\Re w_j + i \Im w_j)^T\\
		&=(\Re w_j)(\Re w_j)^T - (\Im w_j)(\Im w_j)^T + i \left[ (\Re w_j)(\Im w_j)^T + (\Im w_j)(\Re w_j)^T \right].
	\end{align*}
    Thus 
    \begin{align*}
		\sum_j \frac{(1 + s_j^2)}{p_j}\Re(w_j w_j^T)&=	 \sum_j \frac{(1 + s_j^2)}{p_j} \left[ (\Re w_j)(\Re w_j)^T - (\Im w_j)(\Im w_j)^T \right].
        \end{align*}

    Therefore the local condition (\ref{reim1}) holds if and only if 
    
    \begin{align}
    &\sum_{i,j}(\Re w_{i})\mathrm{cov}(\xi_{i}, \xi_{j}) (\Re w_{j})^{T}     - \sum_j \frac{(1 + s_j^2)}{p_j} (\Re w_j)(\Re w_j)^T \geq \label{qf01}\\
    &\sum_{i,j}\alpha s_is_j(\Im w_{i}) \mathrm{cov}(\xi_{i}, \xi_{j}) (\Im w_{j})^{T} - \sum_j \frac{(1 + s_j^2)}{p_j} (\Im w_j)(\Im w_j)^T. \nonumber
    \end{align}
The inequality (\ref{qf01}) compares two quadratic forms acting on different vectors, i.e., $\Re w_{j}$ and $\Im w_{j}$. Therefore the inequality (\ref{qf01}) holds if and only if the left hand side of (\ref{qf01}) is nonneagtive, and the right hand side of (\ref{qf01}) is nonpositive i.e., 
\begin{align*}
   \frac{1}{\alpha} \left\{ \frac{1+s_j^2}{s_j^2p_j}\mathrm{Id}_{k_j} \right\}_{i,j=1}^{n} \geq \mathrm{cov}(\xi)  \geq \mathrm{diag}\left\{ \frac{1+s_{j}^{2}}{p_{j}}\mathrm{Id}_{k_{j}} \right\}_{j=1}^{n}.
\end{align*}
This completes the proof of Theorem~\ref{MBEK}.
\end{proof}

	\begin{proof}[Proof of Theorem~\ref{MBEKS}]
		Choosing $p_1=\ldots=p_n=p$, $s_1=\ldots=s_n=s=\sqrt{p\lambda_{\min}-1}$ and $\alpha=q/p\geq1$ in Theorem \ref{MBEK}, we obtain that \eqref{MBEKSglob} holds if and only if
			\begin{equation}\label{localNOj}
			\frac{1+s^{2}}{s^{2}q} \mathrm{Id}_{k_{1}+\ldots+k_{n}}\geq \mathrm{cov}(\xi) \geq \frac{1+s^{2}}{p} \mathrm{Id}_{k_{1}+\ldots+k_{n}}.  
		\end{equation}
	From the equation $s=\sqrt{p\lambda_{\min}-1}$  we have $\frac{1+s^2}{p}=\lambda_{\min}$ and also, the condition   \eqref{MBEKSloc} gives
		\begin{align*}
			\lambda_{\max} =\frac{p}{q}\cdot\frac{\lambda_{\min}}{p\lambda_{\min}-1}=  \frac{1+s^{2}}{s^{2}q}
		\end{align*}
		meaning that \eqref{MBEKSloc} implies \eqref{localNOj}.
	\end{proof}

\begin{proof}[Proof of Theorem~\ref{rcor}]
The covariance matrix
\[
\mathrm{cov}(\xi) = 
\begin{bmatrix}
	\mathrm{Id}_n & \rho \mathrm{Id}_n \\
	\rho \mathrm{Id}_n & \mathrm{Id}_n
\end{bmatrix}.
\]
has the smallest eigenvalue $\lambda_{\min}=1-|\rho|$ and the largest  $\lambda_{\max}=1+|\rho|$.
\end{proof}

\begin{proof}[Proof of Theorem~\ref{nhsin}]
	Recall, from Mehler formula \eqref{forfur} we have 
	\begin{equation}
		T_{i\sqrt{t_j-1}}f_j(\xi_j)=\frac{\widehat{g_j}}{\widehat{e_{t_j}}}\left( -\eta_j\right)
	\end{equation}
where $\eta_{j} = \frac{\sqrt{t_{j}-1}}{t_{j}} \xi_{j} $, $g_j=f_je_{t_j}$,  $j=1, \ldots, n$,  and $ e_{t}(x) = e^{-|x|^{2}/2t}$. Choosing $s_j=\sqrt{t_j-1}$ in Theorem \ref{MBEK}, we obtain that \eqref{hutRATIOineq} holds for all $g_{j}\in \mathcal{P}(\mathbb{R}^{k_{j}}) e_{t_{j}}$, $j=1, \ldots, n,$ if and only if \eqref{MBEKloc1} holds. 
\end{proof}

\begin{proof}[Proof of Theorem~\ref{pqHS}]
	The proof follows directly from Theorem \ref{MBEKS} and Mehler formula \eqref{forfur}. Note, the Fourier transform of the function $e_t:\R^n\to\R$ given by $ e_{t}(x) := e^{-|x|^{2}/2t}$ is $ \widehat{e_{t}}(\omega) = t^{n/2}e^{-|\omega|^{2}t/2}$. Setting $g_j=f_je_{p\lambda_{\min}}:\R^{k_j}\to\R$, $j=1,\ldots,n$, and $\mu=\frac{\sqrt{p\lambda_{\min}-1}}{p\lambda_{\min}}$ we have 
	\begin{align*}
		\prod_{j=1}^nT_{i\sqrt{p\lambda_{\min}-1}}f_j(\xi_j)&=\prod_{j=1}^n\frac{\widehat{g_j}}{\widehat{e_{p\lambda_{\min}}}}\left(-\mu \xi_j\right)\\
		&=\prod_{j=1}^n (p\lambda_{\min})^{-\frac{k_j}{2}}e^{|\mu \xi_j|^{2}p\lambda_{\min}/2}\widehat{g}_j(-\mu \xi_j)\\
		&=(p\lambda_{\min})^{-\sum \frac{k_j}{2}}e^{\mu^2|\xi|^{2}p\lambda_{\min}/2}\prod_{j=1}^n \widehat{g}_j(-\mu \xi_j)\\
		&=(p\lambda_{\min})^{-\sum \frac{k_j}{2}}e^{\frac{|\xi|^{2}(p \lambda_{\min}-1)}{2p \lambda_{\min}}}\prod_{j=1}^n \widehat{g}_j(-\mu \xi_j).
	\end{align*}
	Moreover, 
	\begin{align*}
		\prod_{j=1}^nf_j(\xi_j)=\prod_{j=1}^n\frac{g_j}{e_{p\lambda_{\min}}}(\xi_j)=\prod_{j=1}^n e^{\frac{|\xi_j|^{2}}{2p\lambda_{\min}}}g_j(\xi_j)=e^{\frac{|\xi|^{2}}{2p \lambda_{\min}}} \prod_{j=1}^{n} g_{j}(\xi_j)
	\end{align*}
Observe that the two calculations above reformulate inequality \eqref{MBEKSglob} as \eqref{nHS}.
\end{proof}

\begin{proof}[Proof of Theorem~\ref{rhoHSI}]
Theorem \ref{pqHS} for $n=2$ and $\xi_1=X$ and  $\xi_2=X_{\rho}$, i.e.,  two $\rho$-correlated standard Gaussians in $\mathbb{R}^{n}$ reduces to the inequality 
\begin{equation}\label{theor2n=2}
\left\|e^{\frac{|\eta|^{2}p \lambda_{\min}}{2}} \widehat{f}(\eta_{1})\widehat{g}(\eta_{2})\right\|_{q} \leq (p\lambda_{\min})^{\frac{k_1+k_2}{2}}\left\|e^{\frac{|\xi|^{2}}{2p \lambda_{\min}}} f(\xi_{1})g(\xi_{2})\right\|_{p}
\end{equation}
holds for all $f,g \in \mathcal{P}(\mathbb{R}^{n})e_{p\lambda_{\min}}$ under the constraint 
\begin{equation}\label{rhoHSIloc1}
\frac{1}{p\lambda_{\min}}+\frac{1}{q\lambda_{\max}}=1,  \ \ \ \ 1\leq p\leq q.
\end{equation}
Notice that $\lambda_{\min}=1-\rho$ and $\lambda_{\max}=1+\rho$. Therefore  $(p\lambda_{\min})^{\frac{k_1+k_2}{2}}=(p(1-\rho))^n $. First we rewrite the right hand side
\begin{align*}
\left\|e^{\frac{|\xi|^{2}}{2p \lambda_{\min}}} f(\xi_{1})g(\xi_{2})\right\|_{p}&=\left\|e^{\frac{|\xi_1|^{2}+|\xi_2|^{2}}{2p (1-\rho)}} f(\xi_{1})g(\xi_{2})\right\|_{p}\\
&=\left(\int_{\R^n}\int_{\R^n}e^{\frac{|x|^2+|\rho x+\sqrt{1-\rho^2}y|^2}{2 (1-\rho)}}|f(x)g(\rho x+\sqrt{1-\rho^2 }y)|^p\,d\gamma(x)\,d\gamma(y)\right)^{1/p}\\
&=\frac{1}{(2\pi)^{n/p}}\left(\int_{\R^n}\int_{\R^n}|f(x)g(\rho x+\sqrt{1-\rho^2 }y)|^pe^{\frac{|x|^2+|\rho x+\sqrt{1-\rho^2}y|^2}{2 (1-\rho)}-\frac{|x|^2+|y|^2}{2}}\,dx\,dy\right)^{1/p}.
\end{align*}
We make the change of variables $x=\sqrt{1-\rho^2}u$ and $y=v-\rho u$ and we see that the exponential density becomes
\begin{align*}
\frac{|x|^2+|\rho x+\sqrt{1-\rho^2}y|^2}{2 (1-\rho)}-\frac{|x|^2+|y|^2}{2}&=\frac{\rho}{2}\left(\frac{1+\rho}{1-\rho}|x|^2+2\frac{\sqrt{1-\rho^2}}{1-\rho}\langle x,y\rangle +|y|^2\right)\\
&=\frac{\rho}{2}|u+v|^2,
\end{align*}
while $\mathrm{d}x\,\mathrm{d}y = (\sqrt{1-\rho^2})^{n} \mathrm{d}u\,\mathrm{d}v$.
Thus the right hand side becomes
\begin{align*}
\left\|e^{\frac{|\xi|^{2}}{2p \lambda_{\min}}} f(\xi_{1})g(\xi_{2})\right\|_{p}&=\left(\frac{\sqrt{1-\rho^{2}}}{2\pi}\right)^{\frac{n}{p}}\left[\int_{\mathbb{R}^{n}}\int_{\mathbb{R}^{n}}\left|f\left(u\sqrt{1-\rho^{2}}\right)g\left(v\sqrt{1-\rho^{2}}\right) \right|^{p}e^{\frac{\rho}{2}|u+v|^{2}}\mathrm{d}u\mathrm{d}v \right]^{1/p}\\
&=\frac{1}{\left(2\pi \sqrt{1-\rho^{2}}\right)^{\frac{n}{p}}}\left[\int_{\mathbb{R}^{n}}\int_{\mathbb{R}^{n}}\left|f\left( x\right)g\left(y\right) \right|^{p}e^{\frac{\rho |x+y|^{2}}{2(1-\rho^{2})}}\mathrm{d}u\mathrm{d}v \right]^{1/p}.
\end{align*}

Next, we set $c=\frac{\sqrt{p(1-\rho)-1}}{p(1-\rho)}$. Applying the same change of variables as before, i.e., $x=\sqrt{1-\rho^2}u$ and $y=v-\rho u$,  we obtain 
\begin{align*}  &\left\|e^{\frac{(|\xi_1|^{2}+|\xi_2|^{2})(p(1-\rho)-1)}{2p(1-\rho)}} \widehat{f}(c\xi_{1})\widehat{g}(c\xi_{2})\right\|_{q}  = \\
&\left(\frac{\sqrt{1-\rho^{2}}}{2\pi}\right)^{\frac{n}{q}}\left[\int_{\mathbb{R}^{n}}\int_{\mathbb{R}^{n}}\left|\widehat{f}\left(cu\sqrt{1-\rho^{2}}\right)\widehat{g}\left(cv\sqrt{1-\rho^{2}}\right) \right|^{q}e^{q(u,v)}\mathrm{d}u\mathrm{d}v \right]^{1/q},
\end{align*}
where $q(u,v)$ is the function
\begin{align*}
q(u,v)&=\frac{|x|^2+|\rho x+\sqrt{1-\rho^2}y|^2}{2 p(1-\rho)}q (p(1-\rho)-1)-\frac{|x|^2+|y|^2}{2}\\
&=\frac{|x|^2+|\rho x+\sqrt{1-\rho^2}y|^2}{2 (1+\rho)}-\frac{|x|^2+|y|^2}{2} \\
&=-\frac{\rho}{2}|u-v|^2.
\end{align*}

Finally, notice that $c\sqrt{1-\rho^{2}}=\frac{1}{\sqrt{pq}}$, hence the left hand side takes the form 
\begin{align*}
&\left\|e^{\frac{|\xi|^{2}(p (1-\rho)-1)}{2p(1-\rho)}} \widehat{f}(x\xi_{1})\widehat{g}(x \xi_{2})\right\|_{q}\\
&=\frac{1}{\left(2\pi c^{2} \sqrt{1-\rho^{2}}\right)^{\frac{n}{q}}}\left[\int_{\mathbb{R}^{n}}\int_{\mathbb{R}^{n}}\left|\widehat{f}(u)\widehat{g}(v) \right|^{q}e^{-\frac{\rho p q}{2}|u-v|^2}\mathrm{d}u\mathrm{d}v \right]^{1/q}
\end{align*}
and this concludes the proof.
\end{proof}

\begin{proof}[Proof of Theorem~\ref{chaoses}]
Let $f_j$ be a $d_j$-homogeneous Gaussian chaos,
\begin{equation*}
f_j(x)=\sum_{|\alpha|=d_j}c_{\alpha}H_{\alpha}(x).
\end{equation*}
Theorem \ref{MBEK} for $p_1=\ldots=p_n=p$, $\alpha=q/p$ and $s_1=\ldots=s_n=s$, reduces to  
\begin{equation*}
\| \prod_{j=1}^n T_{is}f_j(\xi_j)\|_q\leq \| \prod_{j=1}^n f_j(\xi_j)\|_p
\end{equation*}
if and only if
\begin{equation}\label{rHGCloc}
 \frac{1+s^{2}}{s^{2}q} \mathrm{Id}_{N}  \geq \mathrm{cov}(\xi) \geq  \frac{1+s^{2}}{p}\mathrm{Id}_{N} ,
\end{equation}
where $N=k_{1}+\ldots+k_{n}$. We optimize $s$ in \eqref{rHGCloc}. Since the covariance $\mathrm{cov}(\xi)$ must be between  $\lambda_{\min}\mathrm{Id}_{N}$ and $\lambda_{\max}\mathrm{Id}_{N}$, any $s$ ensuring \ref{rHGCloc} must satisfy 
\begin{align*}
\frac{1+s^2}{s^2 q}\geq \lambda_{\max} \ \ \ \ \ \  \text{and} \ \ \ \ \ \ \frac{1+s^2}{p}\leq \lambda_{\min} . 
\end{align*}
Solving both in $s$ we get $s\leq 1/\sqrt{q\lambda_{\max}-1}$ and $s\leq \sqrt{p\lambda_{\min}-1}$ and thus, the best possible $s$ to choose is
\begin{equation*}
s=\min \{ 1/\sqrt{q\lambda_{\max}-1}, \sqrt{p\lambda_{\min}-1} \}.
\end{equation*}
With this choice of $s$, and using the fact that $f_j$ are Gaussian chaoses, we obtain
\begin{equation*}
\| \prod_{j=1}^n T_{is}f_j(\xi_j)\|_q= \| \prod_{j=1}^n (is)^{d_j}f_j(\xi_j)\|_q= s^{\sum d_j}\| \prod_{j=1}^n f_j(\xi_j)\|_q,
\end{equation*}
which completes the proof.
\end{proof}

	\section*{\textbf{Acknowledgements}}
	P.I. was supported in part by NSF CAREER grant DMS-2152401 and a Simons Fellowship. We thank Grigoris Paouris for providing helpful references and comments.

\end{document}